\documentclass[leqno,12pt]{article} 
\usepackage{color}
\usepackage{ulem}
\usepackage{amsmath,amsfonts,amssymb,amsthm,amscd}
\usepackage{hyperref}
\usepackage{wrapfig}
\font\cmssl=cmss10 at 12 pt

\setlength{\textheight}{22.5cm}
\setlength{\textwidth}{15.7cm}
\setlength{\oddsidemargin}{0.5cm}
\setlength{\evensidemargin}{0cm}
\setlength{\topmargin}{-1cm}

\usepackage{amsmath}
\usepackage{amssymb}
\usepackage{amsthm}
\usepackage{pb-diagram}
\usepackage{ascmac}


\newcommand{\n}{\nabla}


\newtheorem{theorem}{Theorem}[section]
\newtheorem{proposition}[theorem]{Proposition}
\newtheorem{lemma}[theorem]{Lemma}
\newtheorem{corollary}[theorem]{Corollary}
\newtheorem{remark}[theorem]{Remark}
\newtheorem{example}[theorem]{Example}
\newtheorem{definition}[theorem]{Definition}


\begin{document}
\title{The H/Q-correspondence and a generalization of the supergravity c-map}
\author{%
Vicente Cort{\' e}s
and
Kazuyuki Hasegawa
}

\maketitle
\begin{abstract}
Given a hypercomplex manifold with a rotating vector field 
(and additional data), 
we construct a conical hypercomplex manifold. 
As a consequence,  
we associate a quaternionic manifold to a hypercomplex manifold of the same dimension 
with a rotating vector field. This is a generalization of the HK/QK-correspondence. 
As an application, we show that
a quaternionic manifold can be associated to a conical special complex manifold of half its dimension. 
Furthermore, a projective special complex manifold (with a canonical c-projective structure)  
associates with a quaternionic manifold. The latter is 
a generalization of the supergravity c-map. We do also show that 
the tangent bundle of any special complex manifold carries a canonical 
Ricci-flat hypercomplex structure, thereby generalizing the rigid c-map.\\

\noindent
2020 Mathematics Subject Classification : 
53C10, 53C56, 53C26.\\
Keywords : conical hypercomplex manifold, 
H/Q\--cor\-res\-pon\-dence, 
generalized supergravity c-map. \\

\end{abstract}

\tableofcontents
\section{Introduction}

The HK/QK-correspondence 
is a construction of a (pseudo-)quaternionic K{\"a}hler manifold from 
a  (pseudo-)hyper-K\"ahler manifold of the same dimension with a rotating vector field
(see Definition~\ref{rot:def} and \cite{Haydys,ACM,Hitchin,ACDM}).
This correspondence gives also the supergravity c-map, which associates 
a quaternionic K{\"a}hler manifold with a projective special K\"ahler manifold.
The supergravity c-map was introduced in theoretical physics \cite{FS}. 

The inverse construction of the HK/QK-correspondence is called the QK/HK\--cor\-res\-pon\-dence. 
It has been generalized to a Q/H-correspondence, a construction of hypercomplex manifolds from quaternionic manifolds \cite{CH}.
The purpose of this paper is to construct a quaternionic manifold from a hypercomplex manifold 
endowed with a rotating vector field and some extra data. 
We shall call this construction the hypercomplex/quaternionic-correspondence 
(H/Q-correspondence for short). 
We briefly explain how we obtain this correspondence. 
First we define the notion of a conical hypercomplex manifold (Definition \ref{conic_hyp_def}). 
Next we construct a conical hypercomplex manifold $\hat{M}$ for every hypercomplex manifold $M$ 
with a rotating vector field $Z$ (Theorem~\ref{conification}) and additional data: 
a two-form $\Theta$ on $M$, a U(1)-bundle over $M$ whose curvature satisfies 
(\ref{curvature_assumption}) and a function $f$ on $M$ such that $df=-\iota_{Z} \Theta$.
The manifold  $\hat{M}$ is endowed with a free action of the Lie algebra $\mathrm{Lie}\, \mathbb{H}^* \cong \mathbb{R}\oplus \mathfrak{su}(2)$ 
and its quotient space $\bar{M}$ 
carries a quaternionic structure, provided that the quotient map $\hat{M} \rightarrow \bar{M}$ is a submersion. The H/Q-correspondence is then defined as $M\mapsto \bar{M}$ 
(Theorems \ref{gene_hq_corresp} and \ref{swan_twist}). 
In addition, we show that $\bar{M}$ carries not only a quaternionic connection but also 
an (induced) affine quaternionic vector field (Proposition~\ref{affineX}). 
Note that we give an example of our H/Q-correspondence 
from a hypercomplex Hopf manifold, which does not admit any hyper-K\"ahler structure 
 (Example \ref{ex_hopf}). 
Therefore the H/Q-correspondence is a proper generalization of the HK/QK-correspondence.  
Examples like hypercomplex or quaternionic Hopf manifolds show that hypercomplex and quaternionic manifolds arise naturally beyond the 
context of hyper-K\"ahler and quaternionic K\"ahler geometry.
We refer to \cite{S, J1, J2} for the theory of quaternionic manifolds and constructions of such manifolds.

The rigid c-map \cite{CFG} allows to associate with a conical special K\"ahler manifold 
its cotangent bundle endowed with a hyper-K\"ahler structure with a rotating vector field \cite{ACM}. 
In the absence of a metric, we show that 
the tangent bundle of a special complex manifold carries a canonical hypercomplex structure and that  its Obata connection is Ricci flat (Theorem~\ref{ricci_flat}). 
In this way we establish a generalization of the rigid c-map which assigns 
a Ricci flat hypercomplex manifold to each special complex manifold. 
When the special complex manifold is conical, the resulting hypercomplex manifold
is shown to admit a canonical rotating vector field (Lemma~\ref{rotation_eq}). 
The notion of a (conical) special complex manifold was introduced in \cite{ACD}. 
It is a generalization of a (conical) special K\"ahler manifold.  
We give a local example which does not arise as a special K\"ahler manifold 
(Example \ref{ex_conic}).  
In addition, we find many (different) quaternionic structures on 
the tangent bundle of a conical special complex manifold in this example (Example 
\ref{ex_conic}), using a generalization of the supergravity c-map.

As an application of our H/Q-correspondence, we indeed generalize the supergravity c-map by 
associating a quaternionic manifold with every conical special complex manifold 
and therefore with every projective special 
complex manifold (using the extra data involved in the H/Q-correspondence), see Theorem~\ref{gene_sup_cmap}.
It is shown in Proposition~\ref{projective_str} that any 
projective special 
complex manifold possesses a canonical c-projective structure and    
in Theorem \ref{Weyl_bar} that its c-projective Weyl curvature is of type $(1,1)$.  
So our generalized supergravity c-map
can be formulated as 
associating a quaternionic manifold to 
a projective special 
complex manifold endowed with its canonical c-projective structure 
with c-projective Weyl curvature of type $(1,1)$. 
This addresses one of the questions raised in \cite{BC}, where 
a different construction of quaternionic manifolds from c-projective structures was obtained, 
compare Remark~\ref{BC:rem}. 

In the special case of the HK/QK-correspondence, the two-form 
 $\Theta$, which is part of the data 
 entering the H/Q-correspondence, 
 is the $Z$-invariant K{\"a}hler form $\omega_1$ in the hyper-K\"ahler-triple $(\omega_1,\omega_2,\omega_3)$. 
 However, in general, we have a freedom in the choice of $\Theta$ in the H/Q-correspondence  
(see Section~\ref{Examples:sec}). 
In particular we find two choices of $\Theta$ in Example \ref{ex_su_3} which yield 
different quaternionic structures on the resulting space. This shows that our H/Q-correspondence is not  
an inverse construction of the Q/H-correspondence without a further specification of $\Theta$. 
It is left for future studies  
to find a suitable choice of $\Theta$ which gives 
an inverse construction.

We summarize our constructions in this paper as the following commutative diagram. \\

{
\begin{wrapfigure}{l}{1cm}
\vspace{-0.7cm}
\tiny
\begin{align*}
\hspace{1cm}
\hat{M}&\mbox{: conical hypercomplex} \\
\bar{M}&\mbox{: quaternionic} \\
N&\mbox{: conical special complex} \\ 
\bar{N}&\mbox{: projective special complex}
\end{align*}
\end{wrapfigure}
\begin{eqnarray*}
\hspace{-7cm}
  \begin{diagram}
    \node[3]{(P,\eta)} 
    \arrow[1]{s,r}{\mathrm{U}(1)} \\
    \node[2]{ (N,J,\nabla,\xi)}     
    \arrow[1]{e,tb,..}{\mbox{\tiny rigid c-map.}}
                         {\mbox{\tiny Theorem \ref{ricci_flat} }}
    \arrow[1]{s,lr}{\mbox{\tiny Proposition \ref{projective_str}}}{p_{N}}
    \node[1] {(M=TN,f, \Theta)} 
    \arrow[1]{e,tb,..}{\mbox{\tiny conification}}
                         {\mbox{\tiny Theorem \ref{conification} } }
    \arrow[1]{se,tb,..}{\mbox{\tiny H/Q-corresp.}}
    {\mbox{\tiny Theorems \ref{gene_hq_corresp} and \ref{swan_twist}}} 
    \node[1]{\hat{M}={\cal C}_{P}(M)}
    \arrow[1]{s,r}{\hat{\pi}}\\
    \node[2]{(\bar{N},\bar{J},\mathcal{P}_{\bar{\nabla}^{\prime}})}
    \arrow[2]{e,tb,..}{\mbox{\tiny generalized supergravity c-map}}
                          {\mbox{\tiny Theorem \ref{gene_sup_cmap}}}
    \node[2]{\bar{M}=\hat{M}/\mathcal{D}}
  \end{diagram}
\end{eqnarray*}
}

\section{Preliminaries}
\setcounter{equation}{0}

Throughout this paper, 
all manifolds are assumed to be smooth and without boundary 
and maps are assumed to be smooth 
unless otherwise mentioned. 
The space of sections of a vector bundle $E\rightarrow M$ 
is denoted by $\Gamma(E)$. 

In this section we introduce hypercomplex and quaternionic structures
and derive some properties of conical hypercomplex manifolds.

We say that $M$ is a {\cmssl quaternionic 
manifold} with the {\cmssl quaternionic structure} $Q$ if 
$Q$ is a subbundle of $\mathrm{End}(T M)$ 
of rank $3$ which at every point $x\in M$ is spanned by endomorphisms  
$I_{1}$, $I_{2}$, $I_{3}\in \mathrm{End} (T_x M)$ satisfying
\begin{eqnarray}\label{quaternionic}
I_{1}^{2}=I_{2}^{2}=I_{3}^{2}=-\mathrm{id}, \,\, I_{1} I_{2}=-I_{2}I_{1}=I_{3}, 
\end{eqnarray}
and there exists a torsion-free connection $\nabla$ on $M$ 
such that $\nabla$ preserves $Q$, that is, 
$\nabla_X \Gamma(Q) \subset \Gamma(Q)$ for all $X\in\Gamma (TM)$. 
Such a torsion-free connection 
$\nabla$ is called a {\cmssl quaternionic connection} and 
the triplet $(I_{1}$, $I_{2}$, $I_{3})$ is called an 
{\cmssl admissible frame} of $Q$ at $x$. 
Note that we use the same letter $\nabla$ for the connection on ${\rm End}(T M)$ 
induced by $\nabla$. The dimension of the quaternionic manifold $M$ is denoted by $4n$. 
%
%
%

An {\cmssl almost hypercomplex manifold} is defined to be a manifold $M$ endowed 
with 3 almost complex structures 
$I_{1}$, $I_{2}$, $I_{3}$ satisfying the quaternionic relations (\ref{quaternionic}). 
If $I_{1}$, $I_{2}$, $I_{3}$ are integrable, then $M$ is called 
a {\cmssl hypercomplex manifold}. 
There exists a unique torsion-free connection on a hypercomplex manifold 
for which the hypercomplex structures are parallel. 
It is called the {\cmssl  Obata connection} \cite{O}. 
Obviously, hypercomplex manifolds are quaternionic manifolds with 
$Q=\langle I_{1},I_{2},I_{3} \rangle$.

\begin{definition}\label{conic_hyp_def}
We say that a hypercomplex manifold $(M,(I_{1},I_{2},I_{3}))$ with a vector field $V$ 
is {\cmssl conical} if 
$\nabla^{0} V=\mathrm{id}$ holds, where $\nabla^{0}$ is the Obata connection. 
The vector field $V$ is called the {\cmssl Euler} vector field.
\end{definition}

We state some lemmas for conical hypercomplex manifolds, which will be used later. 

\begin{lemma}\label{derivative_D}
Let $(M,(I_{1},I_{2},I_{3}),V)$ be a conical hypercomplex manifold. Then we have
$L_{V} I_{\alpha}=0$, $L_{I_{\alpha}V} I_{\alpha}=0$ 
for $\alpha \in \{ 1,2,3\} $ 
and $L_{I_{\alpha}V}I_{\beta}=-2 I_{\gamma}$ 
for any cyclic permutation $(\alpha,\beta,\gamma)$.
\end{lemma}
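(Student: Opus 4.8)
The plan is to reduce all three identities to a single formula relating the Lie derivative of an endomorphism field to the torsion-free Obata connection $\nabla^{0}$. For any $X\in\Gamma(TM)$ and $A\in\Gamma(\mathrm{End}(TM))$, writing the bracket via torsion-freeness as $[X,Y]=\nabla^{0}_{X}Y-\nabla^{0}_{Y}X$, one finds
\[
(L_{X}A)(Y)=[X,AY]-A[X,Y]=(\nabla^{0}_{X}A)(Y)-\nabla^{0}_{AY}X+A(\nabla^{0}_{Y}X).
\]
Since the Obata connection parallelizes each complex structure, $\nabla^{0}I_{\alpha}=0$, the first term drops when $A=I_{\beta}$, leaving the working identity
\[
(L_{X}I_{\beta})(Y)=-\nabla^{0}_{I_{\beta}Y}X+I_{\beta}\bigl(\nabla^{0}_{Y}X\bigr).
\]
This is the only tool required.

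Next I would record the two covariant derivatives that the conical structure supplies. The defining condition $\nabla^{0}V=\mathrm{id}$ gives $\nabla^{0}_{W}V=W$ for every $W$, and combining this with $\nabla^{0}I_{\alpha}=0$ yields $\nabla^{0}_{W}(I_{\alpha}V)=I_{\alpha}\nabla^{0}_{W}V=I_{\alpha}W$. These two formulas turn each Lie derivative into a pointwise-algebraic expression in the $I_{\alpha}$. For the first assertion I set $X=V$ in the working identity: the two terms become $-I_{\beta}Y$ and $I_{\beta}Y$, which cancel, so $L_{V}I_{\beta}=0$. For the other two assertions I set $X=I_{\alpha}V$ and substitute $\nabla^{0}_{W}(I_{\alpha}V)=I_{\alpha}W$, obtaining
\[
(L_{I_{\alpha}V}I_{\beta})(Y)=-I_{\alpha}I_{\beta}Y+I_{\beta}I_{\alpha}Y=(I_{\beta}I_{\alpha}-I_{\alpha}I_{\beta})Y.
\]
For $\alpha=\beta$ the right-hand side vanishes, giving $L_{I_{\alpha}V}I_{\alpha}=0$; for a cyclic permutation $(\alpha,\beta,\gamma)$ the quaternionic relations (\ref{quaternionic}) give $I_{\alpha}I_{\beta}=I_{\gamma}$ and $I_{\beta}I_{\alpha}=-I_{\gamma}$, so the commutator equals $-2I_{\gamma}$, which is the last identity.

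There is no genuine obstacle here: once the working identity is in place the lemma is pure substitution, and the statements are in fact mutually consistent with the fact that the four vector fields $V,I_{1}V,I_{2}V,I_{3}V$ should close to $\mathrm{Lie}\,\mathbb{H}^{*}$. The only point demanding a moment's care is the sign bookkeeping in (\ref{quaternionic}), to confirm that the commutator is $I_{\beta}I_{\alpha}-I_{\alpha}I_{\beta}=-2I_{\gamma}$ rather than $+2I_{\gamma}$.
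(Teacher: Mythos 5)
Your proof is correct and takes essentially the same route as the paper: the paper's one-line argument invokes exactly the torsion-free identity $L_{X}=\nabla^{0}_{X}-\nabla^{0}X$ together with $\nabla^{0}V=\mathrm{id}$, $\nabla^{0}(I_{\alpha}V)=I_{\alpha}$ and $\nabla^{0}I_{\alpha}=0$, which is precisely what your working identity encodes in expanded form. Your substitutions and the sign bookkeeping via $I_{\beta}I_{\alpha}-I_{\alpha}I_{\beta}=-2I_{\gamma}$ are accurate.
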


\begin{proof} The formulas follow immediately from $L_V= \nabla^0_V-\nabla^0V= \nabla^0_V-\mathrm{id}$ and $L_{I_\alpha V} = \nabla^0_{I_\alpha V} - I_\alpha$. 
\end{proof}

For a connection $\nabla$ and $X \in \Gamma(TM)$, we define
\begin{eqnarray}\label{affine}
(L_{X}  \nabla)_{Y} Z
:= L_{X} (\nabla_{Y}Z)-\nabla_{L_{X}Y}Z-\nabla_{Y}(L_{X}Z), 
\end{eqnarray}
where $Y$, $Z \in \Gamma(TM)$. Note that 
$L_{X}  \nabla$ is a tensor. 

\begin{lemma}\label{derivative_Obata}
Let $(M,(I_{1},I_{2},I_{3}),V)$ be a conical hypercomplex manifold. Then we have
$L_{V} \nabla^{0}=0$ and $L_{I_{\alpha}V} \nabla^{0}=0$. 
\end{lemma}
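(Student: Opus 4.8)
The plan is to exploit the defining property of the Obata connection $\nabla^{0}$: it is the \emph{unique} torsion-free connection for which $I_{1},I_{2},I_{3}$ are parallel. For a vector field $X$ with local flow $\phi_{t}$, the pullback $\phi_{t}^{*}\nabla^{0}$ is again torsion-free and
\begin{equation*}
\frac{d}{dt}\Big|_{t=0}\phi_{t}^{*}\nabla^{0}=L_{X}\nabla^{0},
\end{equation*}
so it suffices to prove $\phi_{t}^{*}\nabla^{0}=\nabla^{0}$ for small $t$. By uniqueness this amounts to exhibiting $\phi_{t}^{*}\nabla^{0}$ as the Obata connection of a hypercomplex structure whose Obata connection coincides with that of $(I_{1},I_{2},I_{3})$.

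For $X=V$ this is immediate: by Lemma~\ref{derivative_D} we have $L_{V}I_{\alpha}=0$, hence $\phi_{t}^{*}I_{\alpha}=I_{\alpha}$, and then $\phi_{t}^{*}\nabla^{0}$ is a torsion-free connection with $(\phi_{t}^{*}\nabla^{0})I_{\alpha}=\phi_{t}^{*}(\nabla^{0}I_{\alpha})=0$. Uniqueness of the Obata connection gives $\phi_{t}^{*}\nabla^{0}=\nabla^{0}$, i.e.\ $L_{V}\nabla^{0}=0$.

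For $X=I_{\alpha}V$, with flow $\psi_{t}$, the triple is genuinely rotated, and the crucial point I would establish is that the rotation is by a \emph{constant} (base-point-independent) element of $\mathrm{SO}(3)$. The computation behind Lemma~\ref{derivative_D}, namely $L_{I_{\alpha}V}I_{\beta}=-[I_{\alpha},I_{\beta}]$, shows that $L_{I_{\alpha}V}$ maps the global frame $(I_{1},I_{2},I_{3})$ of $Q$ to itself through a fixed matrix $\rho\in\mathfrak{so}(3)$ with constant entries. Writing $\psi_{t}^{*}I_{\beta}=\sum_{\gamma}c_{\beta\gamma}(t)I_{\gamma}$ and using $\tfrac{d}{dt}\psi_{t}^{*}=\psi_{t}^{*}\circ L_{I_{\alpha}V}$, the coefficients satisfy the constant-coefficient linear system $\dot{c}=\rho\,c$ with $c(0)=\mathrm{id}$; hence $c(t)=\exp(t\rho)\in\mathrm{SO}(3)$, independent of the base point. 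Therefore $(\psi_{t}^{*}I_{1},\psi_{t}^{*}I_{2},\psi_{t}^{*}I_{3})$ is a constant orthogonal recombination of $(I_{1},I_{2},I_{3})$, is still $\nabla^{0}$-parallel, and so has Obata connection $\nabla^{0}$. Since this Obata connection is precisely $\psi_{t}^{*}\nabla^{0}$, we get $\psi_{t}^{*}\nabla^{0}=\nabla^{0}$ and $L_{I_{\alpha}V}\nabla^{0}=0$.

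The only place where real content enters — and hence the main obstacle — is the base-point-independence of $c(t)$: it hinges on $L_{I_{\alpha}V}I_{\beta}$ lying in $Q$ with \emph{constant} coefficients relative to the parallel frame $(I_{1},I_{2},I_{3})$, which converts the a priori pointwise ODE into one whose solution does not depend on the point. As an alternative avoiding flows, one may use the tensorial identity $(L_{X}\nabla^{0})_{Y}Z=R^{0}(X,Y)Z+\nabla^{0}_{Y}\nabla^{0}_{Z}X-\nabla^{0}_{\nabla^{0}_{Y}Z}X$, where $R^{0}$ is the curvature of $\nabla^{0}$. Since $\nabla^{0}_{(\cdot)}V=\mathrm{id}$ and $\nabla^{0}_{(\cdot)}(I_{\alpha}V)=I_{\alpha}$, the Hessian term vanishes in both cases, reducing the claims to $\iota_{V}R^{0}=0$ and $\iota_{I_{\alpha}V}R^{0}=0$; but establishing these contractions requires the type-$(1,1)$ property of the Obata curvature, exactly the extra input that the flow argument circumvents.
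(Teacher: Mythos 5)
Your proof is correct, but it takes a genuinely different route from the paper's. The paper's proof first observes (via Lemma \ref{derivative_D}) that $V$ and $I_{\alpha}V$ are quaternionic vector fields, then invokes the criterion of Proposition 4.2 in \cite{CH} to reduce the lemma to $Ric^{\nabla^{0}}(V,\cdot)=0$ and $Ric^{\nabla^{0}}(I_{\alpha}V,\cdot)=0$; these follow from $R^{\nabla^{0}}(\cdot,\cdot)V=0$ (a consequence of $\nabla^{0}V=\mathrm{id}$ and torsion-freeness) combined with the skew-symmetry and the hermitian property of the Obata Ricci tensor — precisely the curvature input your concluding remark identifies and deliberately avoids. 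Your argument instead runs entirely on the uniqueness of the Obata connection: for $V$ this coincides with the observation the authors themselves make in the remark following their proof, while for $I_{\alpha}V$ your key new point is that $L_{I_{\alpha}V}I_{\beta}$ has \emph{constant} coefficients in the frame $(I_{1},I_{2},I_{3})$, so the pointwise linear ODE $\frac{d}{dt}\psi_{t}^{\ast}I_{\beta}=\psi_{t}^{\ast}(L_{I_{\alpha}V}I_{\beta})$ has the base-point-independent solution $\psi_{t}^{\ast}I_{\beta}=\sum_{\gamma}\exp(t\rho)_{\beta\gamma}I_{\gamma}$ with $\exp(t\rho)\in \mathrm{SO}(3)$; a constant $\mathrm{SO}(3)$-recombination of a $\nabla^{0}$-parallel hypercomplex triple is again a $\nabla^{0}$-parallel hypercomplex triple, whence $\psi_{t}^{\ast}\nabla^{0}=\nabla^{0}$ by uniqueness. (One small presentational point: rather than positing the ansatz $\psi_{t}^{\ast}I_{\beta}=\sum_{\gamma}c_{\beta\gamma}(t)I_{\gamma}$, it is cleaner to note that at each point both $\psi_{t}^{\ast}I_{\beta}$ and $\sum_{\gamma}\exp(t\rho)_{\beta\gamma}I_{\gamma}$ solve the same constant-coefficient linear ODE with the same initial condition, hence agree — your uniqueness reasoning supplies exactly this.) What each approach buys: yours is self-contained and elementary, needing only Lemma \ref{derivative_D} and Obata's uniqueness theorem, with no appeal to \cite{CH} or to the symmetry properties of the Obata Ricci tensor; the paper's is shorter given those known facts and makes the Ricci conditions explicit, which fits the theme of affine quaternionic vector fields developed later. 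Note also that the paper's suggested alternative for $I_{\alpha}V$ (a direct computation with the explicit Obata formula) differs from your flow argument as well, so your treatment of the $I_{\alpha}V$ case appears to be new relative to both.
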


\begin{proof}
By Lemma \ref{derivative_D}, $V$ and $I_{\alpha}V$ are quaternionic vector fields, namely 
$L_{V} \Gamma(Q) \subset \Gamma(Q)$ and 
$L_{I_{\alpha}V} \Gamma(Q) \subset \Gamma(Q)$, where $Q=\langle I_{1},I_{2},I_{3} \rangle$. 
By \cite[Proposition 4.2]{CH}, it is enough to check $Ric^{\nabla^{0}}(V, \, \cdot \,)=0$ 
and $Ric^{\nabla^{0}}(I_{\alpha}V, \, \cdot \,)=0$. We have
\[ Ric^{\nabla^{0}}(V,Y)=-Ric^{\nabla^{0}}(Y,V)=-\mathrm{Tr} \, R^{\nabla^{0}}(\, \cdot \,,Y)V=0. \]
Here we used the skew-symmetry of the Ricci tensor of the Obata connection. 
It follows that also $Ric^{\nabla^{0}}(I_{\alpha}V, \, \cdot \,)=- Ric^{\nabla^{0}}(V, I_{\alpha}\, \cdot \,)=0$, by the hermitian property of the 
Ricci tensor of the Obata connection. 
\end{proof}

Alternatively we could have used Lemma \ref{derivative_D} and the explicit form of the Obata connection
to check $L_{I_{\alpha}V} \nabla^{0}=0$. Note that $L_{V} \nabla^{0}=0$ follows from the uniqueness of the Obata connection, since the 
vector field $V$ preserves the hypercomplex structure.

\begin{example}[The Swann bundle]\label{ex_sw}
{\rm 
The principal $\mathbb{R}^{>0} \times \mathrm{SO(3)}$ bundle over a quaternionic manifold, 
whose fibers consist of all volume elements and admissible frames at each point, possesses  
a hypercomplex structure (see \cite{PPS, CH}). It is conical 
and is called the Swann bundle. The   
fundamental vector field generated by $c(\neq 0) \in T_1\mathbb{R}^{>0}=\mathbb{R}$ 
is the Euler vector field, as can be easily checked from the explicit representation of the Obata connection (see \cite{AM} for example). 
In the notation of \cite{CH} with $\varepsilon=-1$ and $c=-4(n+1)$, 
a basis of fundamental vector fields for the principal action is given by the vector fields 
$V=Z_0$ and $Z_\alpha=-I_\alpha Z_0$ with non-trivial commutators $[Z_\alpha, Z_\beta ] = -2 Z_\gamma$ and Lie derivatives $L_{Z_\alpha}I_\beta = -2 I_\gamma$ for any cyclic permutation of $\{ 1,2,3\}$, where we 
have denoted by $(I_1,I_2,I_3)$ the hypercomplex structure of the Swann bundle. 
Specializing to the Swann bundle $\mathbb{H}^*/\{ \pm 1\}$ 
of a point, we see that $Z_0$ corresponds to $1$ and $(Z_1,Z_2,Z_3)$ to $(i,j,k)$ in $T_1(\mathbb{H}^*/\{ \pm 1\})= T_1\mathbb{H}=\mathbb{H}$.
}\end{example}

\begin{lemma}
On any conical hypercomplex manifold $(M,(I_{1},I_{2},I_{3}),V)$, 
the distribution $\mathcal D:=\langle V, I_{1}V,I_{2}V, I_{3}V \rangle$ 
on $\{ x \in M \mid V_{x} \neq 0 \}$ 
is integrable.
\end{lemma}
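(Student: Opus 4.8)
The plan is to realize $\mathcal{D}$ as a constant-rank involutive distribution on the open set $U := \{x \in M \mid V_x \neq 0\}$ and then invoke the Frobenius theorem.

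First I would check that $\mathcal{D}|_U$ is a genuine rank-$4$ subbundle of $TU$, i.e. that $V_x, (I_1V)_x, (I_2V)_x, (I_3V)_x$ are linearly independent wherever $V_x\neq 0$. The point is that the span of $\mathrm{id}, I_1, I_2, I_3$ inside $\mathrm{End}(T_xM)$ is a copy of the quaternion algebra: writing $q = a\,\mathrm{id} + b I_1 + c I_2 + d I_3$ and $\bar q = a\,\mathrm{id} - b I_1 - c I_2 - d I_3$, the relations (\ref{quaternionic}) give
\[
q\,\bar q = (a^2 + b^2 + c^2 + d^2)\,\mathrm{id},
\]
since the off-diagonal terms cancel by anticommutativity and $I_\alpha^2 = -\mathrm{id}$. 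Hence $q$ is invertible whenever $(a,b,c,d)\neq 0$, so a relation $aV_x + b(I_1V)_x + c(I_2V)_x + d(I_3V)_x = 0$ with $V_x\neq 0$ forces $q=0$. This gives the desired linear independence at each point of $U$.

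Next I would establish involutivity by computing the Lie brackets of the four spanning vector fields. Since the Obata connection $\nabla^{0}$ is torsion-free we may write $[X,Y] = \nabla^{0}_X Y - \nabla^{0}_Y X$, and since $\nabla^{0} I_\alpha = 0$ and $\nabla^{0} V = \mathrm{id}$ a direct computation yields
\[
[V, I_\alpha V] = I_\alpha \nabla^{0}_V V - \nabla^{0}_{I_\alpha V} V = I_\alpha V - I_\alpha V = 0,
\]
while for a cyclic permutation $(\alpha,\beta,\gamma)$,
\[
[I_\alpha V, I_\beta V] = I_\beta\nabla^{0}_{I_\alpha V} V - I_\alpha\nabla^{0}_{I_\beta V} V = (I_\beta I_\alpha - I_\alpha I_\beta)V = -2 I_\gamma V,
\]
using $I_\beta I_\alpha - I_\alpha I_\beta = -2 I_\gamma$. (Equivalently, these brackets follow from Lemma~\ref{derivative_D} via $[X, I_\alpha V] = (L_X I_\alpha)V + I_\alpha[X,V]$.) Every bracket therefore lies in $\Gamma(\mathcal{D})$, so $\mathcal{D}|_U$ is involutive, and Frobenius delivers integrability.

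I expect the only genuinely delicate point to be the constant-rank claim rather than the bracket computation, which is routine. The Frobenius theorem requires $\mathcal{D}$ to be a subbundle, and this is exactly what the division-algebra argument secures on $U$; on the locus $\{V=0\}$ the rank would drop, which is precisely why the statement is restricted to the open set where $V$ does not vanish.
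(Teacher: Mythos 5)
Your proposal is correct and takes essentially the same route as the paper, whose one-line proof simply cites Lemma~\ref{derivative_D}: the bracket computations $[V,I_\alpha V]=0$ and $[I_\alpha V,I_\beta V]=-2I_\gamma V$ that you derive from $\nabla^0 I_\alpha=0$ and $\nabla^0 V=\mathrm{id}$ (or equivalently, as you note, from $[X,I_\alpha V]=(L_X I_\alpha)V+I_\alpha[X,V]$ together with Lemma~\ref{derivative_D}) are exactly the content behind that citation, since Lemma~\ref{derivative_D} is itself proved from the same two Obata-connection identities. Your additional verification that $\mathcal{D}$ has constant rank $4$ on $\{V\neq 0\}$ via the quaternionic identity $q\bar{q}=(a^2+b^2+c^2+d^2)\,\mathrm{id}$ is a point the paper leaves implicit, and it is a correct and worthwhile supplement, since Frobenius does require the subbundle condition.
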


\begin{proof} This follows from Lemma \ref{derivative_D}.
\end{proof}

\section{Conification of hypercomplex manifolds}
\label{conif:sec}
\setcounter{equation}{0}
The main result of this section is a construction of conical 
hypercomplex manifolds $\hat M$ of dimension $\dim \hat M = \dim M + 4$ from hypercomplex manifolds $M$ with a rotating 
vector field.

Let $M$ be a hypercomplex manifold of dimension $4n$ with a hypercomplex structure 
$H=(I_{1},I_{2},I_{3})$.

\begin{definition} \label{rot:def}A vector field $Z$ on a hypercomplex manifold $(M, (I_1,I_2,I_3))$ is called 
{\cmssl rotating} if $L_{Z} I_{1}=0$ and 
$L_{Z}I_{2}=-2 I_{3}$. 
\end{definition}

Note that if $Z$ is rotating, then $L_{Z}I_{3}=2 I_{2}$.
In this section we will essentially show that by choosing 
a (local) primitive of the one-form $\iota_Z\Theta$ 
we can construct a conical hypercomplex manifold $(\hat{M},\hat{H},V)$ 
for a hypercomplex manifold $(M,H)$ with a rotating vector field $Z$ and a closed two-form $\Theta$ such that $L_{Z} \Theta=0$. 

Let $f$ be a smooth function on $M$ such that $df = -\iota_{Z} \Theta$ and 
$f_{1}:=f-(1/2)\Theta(Z,I_{1}Z)$ is nowhere vanishing. 
Consider a principal $\mathrm{U}(1)$-bundle $\pi:P \to M$ with a connection form $\eta$ 
whose curvature form is 
\begin{align}\label{curvature_assumption}
d \eta
   =\pi^{\ast} \left( \Theta-\frac{1}{2} d ((\iota_{Z} \Theta) \circ I_{1}) \right). 
\end{align}
Since the curvature $d\eta$ is a basic form, we will usually identify it with its projection  $\Theta-\frac{1}{2} d ((\iota_{Z} \Theta) \circ I_{1})$ 
on $M$. With this 
understood we have the following lemma, which follows immediately from the definition of $f_1$.
\begin{lemma} \label{deta:lemma}
$df_1 = -\iota_Zd\eta$.
\end{lemma}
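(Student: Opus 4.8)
The plan is to contract the curvature formula (\ref{curvature_assumption}) with $Z$ and reduce the resulting expression to exact one-forms by means of Cartan's magic formula $L_Z=\iota_Z d+d\iota_Z$. Abbreviating $\sigma:=\iota_Z\Theta$, equation (\ref{curvature_assumption}) reads $d\eta=\Theta-\frac12 d(\sigma\circ I_1)$, so that
\[
\iota_Z d\eta=\iota_Z\Theta-\tfrac12\,\iota_Z d(\sigma\circ I_1)=\sigma-\tfrac12\,\iota_Z d(\sigma\circ I_1).
\]
Thus the entire content of the lemma is the evaluation of the single term $\iota_Z d(\sigma\circ I_1)$, which I would compute next.

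Applying Cartan's formula to the one-form $\sigma\circ I_1$ gives $\iota_Z d(\sigma\circ I_1)=L_Z(\sigma\circ I_1)-d\big((\sigma\circ I_1)(Z)\big)$. The last term is immediate, since $(\sigma\circ I_1)(Z)=\sigma(I_1 Z)=\Theta(Z,I_1 Z)$, and so contributes $-d(\Theta(Z,I_1 Z))$. The key step is to show that the Lie-derivative term vanishes. Here I would use the Leibniz identity $L_Z(\beta\circ A)=(L_Z\beta)\circ A+\beta\circ(L_Z A)$ with $\beta=\sigma$ and $A=I_1$: the second summand drops out because $Z$ is rotating, i.e.\ $L_Z I_1=0$, while $L_Z\sigma=L_Z\iota_Z\Theta=\iota_Z L_Z\Theta=0$ since $L_Z\Theta=0$ by hypothesis. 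Hence $L_Z(\sigma\circ I_1)=0$ and therefore $\iota_Z d(\sigma\circ I_1)=-d(\Theta(Z,I_1 Z))$.

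Substituting back and using $df=-\iota_Z\Theta=-\sigma$ yields
\[
\iota_Z d\eta=\sigma+\tfrac12\,d(\Theta(Z,I_1 Z))=-df+\tfrac12\,d(\Theta(Z,I_1 Z))=-d\!\left(f-\tfrac12\Theta(Z,I_1 Z)\right)=-df_1,
\]
which is precisely the asserted identity $df_1=-\iota_Z d\eta$. The only point demanding care is the middle paragraph: the correct use of the Leibniz rule for $L_Z$ across composition with $I_1$, together with the observation that both resulting pieces vanish (one from the rotating condition $L_Z I_1=0$, the other from the $L_Z$-invariance of $\Theta$). Everything else is pure bookkeeping, which is why the statement may reasonably be described as following immediately from the definition of $f_1$.
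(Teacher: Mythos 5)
Your proof is correct and is precisely the computation behind the paper's one-line proof (the paper simply asserts the lemma ``follows immediately from the definition of $f_1$''): contracting (\ref{curvature_assumption}) with $Z$, applying Cartan's formula to $(\iota_Z\Theta)\circ I_1$, and killing the Lie-derivative term via $L_ZI_1=0$ and $L_Z\Theta=0$. Your closing observation is also apt: the standing hypotheses that $Z$ is rotating and $L_Z\Theta=0$ are genuinely used, not just the definition of $f_1$.
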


Define a vector field $Z_{1}$ on $P$ by $Z_{1}=Z^{h_{\eta}}
+(\pi^{\ast }f_{1}) X_{P}$,
where $Z^{h_{\eta}}$ is the $\eta$-horizontal lift and  
$X_{P}$ is the fundamental vector field such that $\eta(X_{P})=1$. 
We will write $f_{1}$ for $\pi^{\ast}f_{1}$.

\begin{remark}
Note that $[X_{P},Z_{1}]=0$. 
Therefore if $Z_{1}$ generates a $\mathrm{U}(1)$-action on $P$, then its action commutes with the principal action
of $\pi:P \to M$. 
\end{remark}

Set $\tilde{M}={\mathbb H}^{\ast} \times  P$. 
Let $(e_{0}^{R}, e_{1}^{R}, e_{2}^{R}, e_{3}^{R})$ 
(resp. $(e_{0}^{L}, e_{1}^{L}, e_{2}^{L}, e_{3}^{L})$) 
be the right-invariant (resp. the left-invariant) 
frame of ${\mathbb H}^{\ast}$
which coincides with $(1,i,j,k)$ at $1 \in {\mathbb H}^{\ast}$. 
Note that $[e_{1}^{R}, e_{2}^{R}]=-2e_{3}^{R}$. 
We will use the same letter for vectors or vector fields canonically lifted to the product 
$\tilde{M}={\mathbb H}^{\ast} \times  P$ 
as for those on the factors ${\mathbb H}^{\ast}$ and $P$. 
Set 
\[ V_{1}:=e_{1}^{L}-Z_{1}. \] 
We denote the space of integral curves of $V_{1}$ by $\hat{M}$.  
We assume that the quotient map $\tilde{\pi}:\tilde{M} \to \hat{M}$ 
is a submersion. Note that ``submersion" requires that the quotient space $\hat{M}$ is smooth.

\begin{lemma}\label{derivative_V1}
We assume that the equation (\ref{curvature_assumption}) holds. 
If $L_{Z} I_{1}=0$ and $L_{Z} \Theta=0$, 
we have
\[ L_{V_{1}} Y^{h_{\eta}}=-[Z,Y]^{h_{\eta}} \]
for all $Y \in \Gamma(TM)$.
\end{lemma}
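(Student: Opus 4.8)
The plan is to compute the Lie bracket $L_{V_1} Y^{h_\eta} = [V_1, Y^{h_\eta}]$ directly, exploiting the product structure $\tilde{M} = \mathbb{H}^* \times P$ together with the definition $V_1 = e_1^L - Z_1$, where $Z_1 = Z^{h_\eta} + f_1 X_P$. Since $e_1^L$ is a vector field on the $\mathbb{H}^*$ factor and $Y^{h_\eta}$ is a vector field on the $P$ factor (both lifted to the product in the canonical way), their brackets with factors from the other side vanish, so $[e_1^L, Y^{h_\eta}] = 0$. Thus the whole computation reduces to evaluating $[V_1, Y^{h_\eta}] = -[Z_1, Y^{h_\eta}] = -[Z^{h_\eta}, Y^{h_\eta}] - [f_1 X_P, Y^{h_\eta}]$.

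First I would handle the horizontal part $[Z^{h_\eta}, Y^{h_\eta}]$. The standard formula for the bracket of horizontal lifts on a principal bundle with connection form $\eta$ gives $[Z^{h_\eta}, Y^{h_\eta}] = [Z,Y]^{h_\eta} - (d\eta)(Z^{h_\eta}, Y^{h_\eta})\, X_P$, since the vertical part of the bracket of horizontal lifts is governed by minus the curvature. Using that $d\eta$ is basic and identifying it with $\Theta - \frac{1}{2} d((\iota_Z \Theta)\circ I_1)$ on $M$, this curvature term becomes $(\iota_Z d\eta)(Y)\, X_P = -(df_1)(Y)\, X_P$ by Lemma~\ref{deta:lemma}, i.e. $-(Y f_1)\, X_P$. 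Next I would compute the fundamental-field part: $[f_1 X_P, Y^{h_\eta}] = f_1 [X_P, Y^{h_\eta}] - (Y^{h_\eta} f_1)\, X_P$. The bracket $[X_P, Y^{h_\eta}]$ vanishes because the horizontal lift $Y^{h_\eta}$ is invariant under the principal $\mathrm{U}(1)$-action generated by $X_P$ (the connection is $\mathrm{U}(1)$-invariant and $Y$ is basic), and $Y^{h_\eta} f_1 = Y f_1$ since $f_1 = \pi^* f_1$ is basic and $Y^{h_\eta}$ projects to $Y$.

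Assembling the pieces, the two $X_P$-contributions are designed to cancel: from $[Z^{h_\eta}, Y^{h_\eta}]$ we get a term $+(Y f_1)\, X_P$ in $-[Z_1, Y^{h_\eta}]$, and from $-[f_1 X_P, Y^{h_\eta}]$ we get $-(Y f_1)\, X_P$; summing leaves only $-[Z,Y]^{h_\eta}$, which is the claimed identity. The role of the two hypotheses is exactly to make the curvature identity and Lemma~\ref{deta:lemma} available: $L_Z \Theta = 0$ (with $\Theta$ closed) and $L_Z I_1 = 0$ ensure that the object $\iota_Z d\eta$ equals $-df_1$, which is what forces the vertical terms to annihilate. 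The main obstacle I anticipate is bookkeeping the vertical ($X_P$) components correctly—in particular getting the sign of the curvature term in the horizontal-lift bracket formula right and confirming that $[X_P, Y^{h_\eta}] = 0$ from the invariance of the horizontal distribution—rather than any conceptual difficulty, since once Lemma~\ref{deta:lemma} is invoked the cancellation is automatic.
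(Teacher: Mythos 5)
Your proposal is correct and takes essentially the same route as the paper's own proof: reduce $L_{V_1}Y^{h_\eta}$ to $-[Z_1,Y^{h_\eta}]$ via $[e_1^L,Y^{h_\eta}]=0$ on the product $\mathbb{H}^*\times P$, split $Z_1=Z^{h_\eta}+f_1X_P$, use the curvature formula for the bracket of horizontal lifts together with Lemma \ref{deta:lemma} ($df_1=-\iota_Z d\eta$) and $[X_P,Y^{h_\eta}]=0$, and let the two vertical terms cancel. One cosmetic remark: in your final assembly the two $X_P$-contributions carry swapped signs relative to your own intermediate computations (correctly, $-[Z^{h_\eta},Y^{h_\eta}]$ contributes $-(Yf_1)X_P$ and $-[f_1X_P,Y^{h_\eta}]$ contributes $+(Yf_1)X_P$), but since they cancel either way the conclusion is unaffected.
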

\begin{proof}
\begin{align*}
-L_{V_{1}} Y^{h_{\eta}}{} 
=&-[e_{1}^{L}-Z_{1},Y^{h_{\eta}}{} ] = [Z_{1},Y^{h_{\eta}}{}] \\
=&[Z^{h_{\eta}}{},Y^{h_{\eta}}{}]+[f_{1}X_{P},Y^{h_{\eta}}{}] = [Z^{h_{\eta}}{},Y^{h_{\eta}}{}]-(Y^{h_{\eta}}{}f_{1})X_{P}\\
=&[Z,Y]^{h_{\eta}}{}+\eta([Z^{h_{\eta}}{},Y^{h_{\eta}}{}])X_{P}-(Y^{h_{\eta}}{}f_{1})X_{P}\\
=&[Z,Y]^{h_{\eta}}{}-d \eta(Z,Y)X_{P}- (Yf_{1})X_{P}\\
=&[Z,Y]^{h_{\eta}}{},
\end{align*}
where we have used Lemma \ref{deta:lemma}.
\end{proof}

Note that
\begin{align*}
T_{(z,p)} \tilde{M} 
&\cong T_{z} {\mathbb H}^{\ast} \oplus T_{p}P 
= \langle e_{0}^{R}, e_{1}^{R}, e_{2}^{R}, e_{3}^{R} \rangle_{z} \oplus  \langle X_{P}\rangle_{p} 
      \oplus {\rm Ker} \, \eta_{p} \\
&= \langle V_{1} \rangle_{(z,p)} \oplus \langle e_{0}^{R}, e_{1}^{R}, e_{2}^{R}, e_{3}^{R} \rangle_{z} 
      \oplus {\rm Ker} \, \eta_{p} 
\end{align*}
for $(z,p) \in {\mathbb H}^{\ast} \times  P$. 
We define three endomorphisms fields $\tilde{I}_{1},\tilde{I}_{2},\tilde{I}_{3}$ on $\tilde{M}$ 
of $\mathrm{rank}\, \tilde{I}_{\alpha}=4n+4$ $(\alpha=1,2,3)$ as follows: 
\begin{align*}
&\tilde{I}_{\alpha} V_{1} = 0, \,
\tilde{I}_{\alpha} e_{0}^{R} =e_{\alpha}^{R}, \,
\tilde{I}_{\alpha} e_{\alpha}^{R} =-e_{0}^{R}, \,
\tilde{I}_{\alpha} e_{\beta}^{R} = e_{\gamma}^{R}, \, 
\tilde{I}_{\alpha} e_{\gamma}^{R} = -e_{\beta}^{R}, \\
&(\tilde{I}_{\alpha})_{(z,p)} ((Y^{h_{\eta}}{})_{(z,p)})
=((I^{\prime}_{\alpha})_{\pi(p)}(\pi_{\ast}Y))^{h_{\eta}}{}_{(z,p)} 
\end{align*}
for $Y \in T_pM$. 
Here $I^{\prime}_{\alpha}$ is defined by 
\begin{align}\label{I_prime}
I^{\prime}_{\alpha}=\sum_{\beta=1}^{3} A_{\alpha \beta} I_{\beta}, 
\end{align}
where 
$A=(A_{\alpha \beta}) \in \mathrm{SO}(3)$ is the representation matrix of 
$\mathrm{Ad}_{z} | _{ \mathrm{Im} \mathbb H}$ with respect to the basis $(i,j,k)$. 
Note that $\mathrm{Ker} \, \tilde{I}_{\alpha}=\langle V_{1} \rangle$, 
$\mathrm{Im} \, \tilde{I}_{\alpha} =T\mathbb{H}^*
\oplus \mathrm{Ker}\, \eta$ $(\alpha=1,2,3)$ and that $\tilde{I}_{1},\tilde{I}_{2},\tilde{I}_{3}$ satisfy the quaternionic relations 
on 
$T\mathbb{H}^*
\oplus {\rm Ker} \, \eta$. 

\begin{lemma} \label{e0R:lemma}$L_{e_0^R}\tilde{I}_\alpha=0$.
\end{lemma}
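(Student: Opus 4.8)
The plan is to verify $L_{e_0^R}\tilde{I}_\alpha = 0$ by evaluating the tensor
$(L_{e_0^R}\tilde{I}_\alpha)(W) = [e_0^R, \tilde{I}_\alpha W] - \tilde{I}_\alpha[e_0^R, W]$
on the adapted frame $\langle V_1\rangle \oplus \langle e_0^R, e_1^R, e_2^R, e_3^R\rangle \oplus \mathrm{Ker}\,\eta$ exhibited just above. Since $L_{e_0^R}\tilde{I}_\alpha$ is a tensor, it suffices to check its vanishing on each frame vector. The computation rests on the commutators of $e_0^R$ with the frame vectors together with the way the defining data of $\tilde{I}_\alpha$ depend on the $\mathbb{H}^*$-factor.

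The key observation is that $e_0^R$ is the right-invariant vector field generated by $1 \in \mathbb{H}$, which is central in $\mathbb{H}$; hence $e_0^R$ coincides with the left-invariant field $e_0^L$, its flow on the $\mathbb{H}^*$-factor is the central scaling $z \mapsto e^{t} z$, and it acts trivially on the $P$-factor. From this I read off three brackets. First, $[e_0^R, e_\mu^R] = 0$ for $\mu = 0,1,2,3$, since $1$ is central. Second, $[e_0^R, V_1] = [e_0^R, e_1^L] - [e_0^R, Z_1] = 0$, because right-invariant and left-invariant vector fields always commute and because $Z_1$ lives on the $P$-factor. Third, $[e_0^R, Y^{h_\eta}] = 0$ for the same factor reason, as $Y^{h_\eta}$ lives on $P$.

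Granting these brackets, the cases $W = V_1$ and $W = e_\mu^R$ are immediate: both terms in $(L_{e_0^R}\tilde{I}_\alpha)(W)$ vanish, since $\tilde{I}_\alpha V_1 = 0$ and $\tilde{I}_\alpha e_\mu^R$ is again a constant-coefficient combination of the $e_\nu^R$, on which $e_0^R$ acts with zero bracket. The only case requiring genuine work is $W = Y^{h_\eta}$, where $\tilde{I}_\alpha Y^{h_\eta} = ((I'_\alpha)(\pi_* Y))^{h_\eta}$ with $I'_\alpha = \sum_\beta A_{\alpha\beta} I_\beta$ and $A = \mathrm{Ad}_z|_{\mathrm{Im}\,\mathbb{H}}$. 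As $[e_0^R, Y^{h_\eta}] = 0$, it remains to show $L_{e_0^R}(\tilde{I}_\alpha Y^{h_\eta}) = 0$, i.e. that this field is invariant under the flow of $e_0^R$. Here the $z$-dependence enters only through the coefficients $A_{\alpha\beta}(z)$, the horizontal-lift factor being $z$-independent; and because $e^{t}$ is a central real scalar one has $\mathrm{Ad}_{e^{t} z} = \mathrm{Ad}_{e^{t}}\,\mathrm{Ad}_z = \mathrm{Ad}_z$, so $A$ is constant along the flow $z \mapsto e^{t} z$. Hence $\tilde{I}_\alpha Y^{h_\eta}$ is flow-invariant and its Lie derivative vanishes, completing all cases.

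The main obstacle is precisely this last case: on the horizontal distribution $\tilde{I}_\alpha$ is twisted by the point-dependent rotation $\mathrm{Ad}_z$, so a priori it need not be $e_0^R$-invariant there. The resolution is the centrality of the real scalars in $\mathbb{H}$, which makes $\mathrm{Ad}_z$ insensitive to the radial scaling generated by $e_0^R$; this is the one structural fact on which the whole lemma turns.
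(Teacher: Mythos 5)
Your proof is correct and is essentially the paper's argument in infinitesimal form: the paper checks that the flow $\varphi_t:(z,p)\mapsto(e^tz,p)$ of $e_0^R$ preserves the product decomposition, acts trivially on $P$ (hence preserves $\mathrm{Ker}\,\eta$), is tri-holomorphic for the standard structure on $\mathbb{H}^*$, and fixes the twisting matrix because $\mathrm{Ad}_z=\mathrm{Ad}_{rz}$ for $r>0$ — exactly the facts you encode in your bracket computations on the adapted frame. Your frame-by-frame verification, including the explicit treatment of $V_1$ and the identification of $\mathrm{Ad}_{e^tz}=\mathrm{Ad}_z$ as the structural crux, matches the paper's reasoning in substance.
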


\begin{proof}The flow $\varphi_t : (z,p)\mapsto (e^tz,p)$ of $e_0^R$ preserves the 
decomposition $\tilde{M} = \mathbb{H}^*\times P$ and acts trivially 
on the second factor. In particular, it preserves the distribution 
$\mathrm{Ker}\, \eta$. The action on the first factor is tri-holomorphic with respect 
to the (standard) hyper-complex structure induced by $(\tilde{I}_\alpha)$ on $\mathbb{H}^*$.  
Since $\mathrm{Ad}_z=\mathrm{Ad}_{rz}$ for all 
$r>0$, we also see that $\varphi_t$ preserves the tensors $\tilde{I}_\alpha|_{\mathrm{Ker}\, \eta}$.
\end{proof}

\begin{lemma}\label{invariant_partial}
If $Z$ is rotating and $L_{Z} \Theta=0$, then 
we have $L_{V_{1}} \tilde{I}_{\alpha}=0$.
\begin{proof}
By the definition of $\tilde{I}_{\alpha}$, it is easy to obtain 
$(L_{V_{1}} \tilde{I}_{\alpha})V_{1} =0$ and 
$(L_{V_{1}} \tilde{I}_{\alpha})e_{\delta}^{R} =0$ 
$(\delta=0,\ldots ,3)$. Moreover, by Lemma \ref{derivative_V1}, we have
\begin{align*}
&(L_{V_{1}} \tilde{I}_{\alpha})_{(z,p)}(Y^{h_{\eta}}{}) \\
=&[V_{1}, \tilde{I}_{\alpha} Y^{h_{\eta}}{}]_{(z,p)}-\tilde{I}_{\alpha}[V_{1},Y^{h_{\eta}}{}]_{(z,p)} \\
=&[e_{1}^{L},\tilde{I}_{\alpha} Y^{h_{\eta}}{} ]_{(z,p)}-[Z_{1},\tilde{I}_{\alpha} Y^{h_{\eta}}{}]_{(z,p)}
   +\tilde{I}_{\alpha} [Z,Y]^{h_{\eta}}{}_{(z,p)}\\
=& [e_{1}^{L},\tilde{I}_{\alpha} Y^{h_{\eta}}{} ]_{(z,p)}-[Z^h,\tilde{I}_{\alpha} Y^{h_{\eta}}{}]_{(z,p)} -[f_1X_P, \tilde{I}_{\alpha} Y^{h_{\eta}}{}]_{(z,p)}+
(I_{\alpha}' [Z,Y])^{h_{\eta}}{}_{(z,p)}\\
=&[e_{1}^{L},\tilde{I}_{\alpha} Y^{h_{\eta}}{} ]_{(z,p)}-((L_{Z} I^{\prime}_{\alpha})Y)^{h_{\eta}}{}_{(z,p)},
\end{align*}
where we have used that $[Z^h_{\eta},\tilde{I}_{\alpha} Y^{h_{\eta}}{}]
+[f_1X_P, \tilde{I}_{\alpha} Y^{h_{\eta}}{}]=[Z,I_{\alpha}'Y]^{h_{\eta}}+\eta ([Z,I_{\alpha}'Y])X_P
-(I_{\alpha}'Y)(f_1)X_P=[Z,I_{\alpha}'Y]^{h_{\eta}}$ at the point $(z,p)$, by Lemma \ref{deta:lemma}.
Taking the flow $\varphi_{t}$ generated by $e_{1}^{L}$, we have 
\[ [e_{1}^{L},\tilde{I}_{\alpha} Y^{h_{\eta}}{} ]_{(z,p)}
= \sum_{\beta=1}^{3} \left(\left. \frac{d}{dt}\right|_{t=0} A_{\alpha \beta}(t) \right)(I_{\beta} Y)^{h_{\eta}}{}_{(z,p)}, \]
where 
\[ A(t)=(A_{\alpha \beta}(t))
   =\left( \begin{array}{ccc}
                    1 &  0 &  0  \\
                    0 &  \cos 2t & \sin 2t \\
                    0 &  -\sin 2t &  \cos 2t \\
                    \end{array} 
             \right) \in \mathrm{SO}(3),
\]
is the matrix associated with $\varphi_{t}(z)$. 
On the other hand, we see that
\begin{align*}
L_{Z} I_{1}^{\prime} &=-2A_{12}I_{3}+2 A_{13}I_{2}, \\
L_{Z} I_{2}^{\prime} &=-2A_{22}I_{3}+2 A_{23}I_{2}, \\
L_{Z} I_{3}^{\prime} &=-2A_{32}I_{3}+2 A_{33}I_{2} 
\end{align*}
and hence
\[ L_{Z} (I_{1}^{\prime},I_{2}^{\prime},I_{3}^{\prime})
=(L_{Z} I_{1}^{\prime},L_{Z}I_{2}^{\prime},L_{Z}I_{3}^{\prime})
=(I_{1},I_{2},I_{3}) \left( \frac{d}{dt} A(t) \right). 
\]
Therefore we have $(L_{V_{1}} \tilde{I}_{\alpha})_{(z,p)}(Y^{h_{\eta}}{})=0$. 
\end{proof}
\end{lemma}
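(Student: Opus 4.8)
The plan is to prove $L_{V_{1}}\tilde{I}_{\alpha}=0$ by verifying that the endomorphism-valued tensor $L_{V_{1}}\tilde{I}_{\alpha}$ vanishes on each field of a frame adapted to the decomposition
\[
T_{(z,p)}\tilde{M} = \langle V_{1}\rangle_{(z,p)}\oplus \langle e_{0}^{R}, e_{1}^{R}, e_{2}^{R}, e_{3}^{R}\rangle_{z} \oplus \mathrm{Ker}\,\eta_{p}
\]
recorded above. On the first two summands this should be routine. Since $\tilde{I}_{\alpha}V_{1}=0$ and each $\tilde{I}_{\alpha}e_{\delta}^{R}$ is again one of the $\pm e_{\gamma}^{R}$, everything reduces to brackets of $V_{1}=e_{1}^{L}-Z_{1}$ with the right-invariant fields and with $V_{1}$ itself. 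Because left- and right-invariant fields on $\mathbb{H}^{\ast}$ commute, and because $Z_{1}=Z^{h_{\eta}}+f_{1}X_{P}$ together with the basic function $f_{1}$ lives entirely on the $P$-factor while the $e_{\delta}^{R}$ live on the $\mathbb{H}^{\ast}$-factor, I expect $[V_{1},e_{\delta}^{R}]=0$, whence $(L_{V_{1}}\tilde{I}_{\alpha})e_{\delta}^{R}=0$ and $(L_{V_{1}}\tilde{I}_{\alpha})V_{1}=0$ immediately.

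The substance lies in the evaluation on a horizontal lift $Y^{h_{\eta}}$. First I would expand
\[
(L_{V_{1}}\tilde{I}_{\alpha})(Y^{h_{\eta}}) = [V_{1}, \tilde{I}_{\alpha} Y^{h_{\eta}}] - \tilde{I}_{\alpha}[V_{1}, Y^{h_{\eta}}]
\]
and invoke Lemma~\ref{derivative_V1} to replace $[V_{1},Y^{h_{\eta}}]$ by $-[Z,Y]^{h_{\eta}}$, so the second term becomes $\tilde{I}_{\alpha}[Z,Y]^{h_{\eta}}=(I'_{\alpha}[Z,Y])^{h_{\eta}}$. Splitting $V_{1}=e_{1}^{L}-Z_{1}$ in the first term and writing $Z_{1}=Z^{h_{\eta}}+f_{1}X_{P}$, I then need to control the bracket of $Z^{h_{\eta}}$ and of $f_{1}X_{P}$ with $\tilde{I}_{\alpha}Y^{h_{\eta}}=(I'_{\alpha}Y)^{h_{\eta}}$. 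The horizontal-lift bracket formula produces a vertical term proportional to $d\eta(Z,I'_{\alpha}Y)X_{P}$, while $[f_{1}X_{P},(I'_{\alpha}Y)^{h_{\eta}}]$ produces $-((I'_{\alpha}Y)f_{1})X_{P}$; the key point is that these two vertical contributions cancel exactly by Lemma~\ref{deta:lemma} ($df_{1}=-\iota_{Z}d\eta$), leaving the clean horizontal expression $[Z,I'_{\alpha}Y]^{h_{\eta}}$. Combining with the earlier term collapses the $Z$-dependence into $-((L_{Z}I'_{\alpha})Y)^{h_{\eta}}$.

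It remains to compute the $e_{1}^{L}$-contribution $[e_{1}^{L},\tilde{I}_{\alpha}Y^{h_{\eta}}]$ and to check it equals $((L_{Z}I'_{\alpha})Y)^{h_{\eta}}$, so that everything cancels. For this I would use that the flow of the left-invariant field $e_{1}^{L}$ is right translation $z\mapsto ze^{ti}$ on $\mathbb{H}^{\ast}$, which acts on $A=\mathrm{Ad}_{z}|_{\mathrm{Im}\,\mathbb{H}}$ by right multiplication with the one-parameter rotation $\mathrm{Ad}_{e^{ti}}$ fixing $i$ and rotating $(j,k)$ at angular speed $2$. Since $\tilde{I}_{\alpha}Y^{h_{\eta}}=\sum_{\beta}A_{\alpha\beta}(z)(I_{\beta}Y)^{h_{\eta}}$ with the $(I_{\beta}Y)^{h_{\eta}}$ depending only on $p$, differentiating the coefficients along this flow expresses $[e_{1}^{L},\tilde{I}_{\alpha}Y^{h_{\eta}}]$ as a linear combination of the $(I_{\beta}Y)^{h_{\eta}}$. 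On the other side, the rotating hypothesis $L_{Z}I_{1}=0$, $L_{Z}I_{2}=-2I_{3}$, $L_{Z}I_{3}=2I_{2}$ (Definition~\ref{rot:def}) yields $L_{Z}I'_{\alpha}=\sum_{\beta}A_{\alpha\beta}L_{Z}I_{\beta}$ explicitly. I expect the two coefficient matrices to agree precisely, because the infinitesimal right-rotation of $\mathrm{Ad}_{z}$ by $i$ is the very same rotation in the $(j,k)$-plane fixing $i$ as the one by which the rotating field $Z$ turns the pair $(I_{2},I_{3})$ while fixing $I_{1}$. This matching of the two $\mathfrak{su}(2)$-rotations, one arising from the geometry of $\mathbb{H}^{\ast}$ and one from the rotating vector field on $M$, is the heart of the lemma and the step I expect to be most delicate; the careful cancellation of the vertical $X_{P}$-terms via $df_{1}=-\iota_{Z}d\eta$ is the other point requiring attention.
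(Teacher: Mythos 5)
Your proposal is correct and takes essentially the same route as the paper's own proof: the same frame decomposition $\langle V_{1}\rangle \oplus \langle e_{0}^{R},\dots,e_{3}^{R}\rangle \oplus \mathrm{Ker}\,\eta$ with the easy vanishing on the first two summands, the reduction of $[V_{1},Y^{h_{\eta}}]$ via Lemma~\ref{derivative_V1}, the exact cancellation of the vertical $X_{P}$-contributions through $df_{1}=-\iota_{Z}d\eta$ (Lemma~\ref{deta:lemma}), and the identification of $[e_{1}^{L},\tilde{I}_{\alpha}Y^{h_{\eta}}]$ with $((L_{Z}I'_{\alpha})Y)^{h_{\eta}}$ by comparing the rotation matrix $A(t)$ of the $e_{1}^{L}$-flow (right translation by $e^{ti}$, rotating the $(j,k)$-plane at speed $2$) with $L_{Z}I'_{\alpha}=\sum_{\beta}A_{\alpha\beta}L_{Z}I_{\beta}$ from the rotating condition. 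All the claims you flag as delicate are exactly the steps the paper carries out, and they hold as you expect.
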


By Lemma \ref{invariant_partial}, we can define 
an almost hypercomplex structure $(\hat{I}_{1},\hat{I}_{2},\hat{I}_{3})$ on $\hat{M}$
satisfying $\tilde{\pi}_{\ast} \circ \tilde{I}_{\alpha}=\hat{I}_{\alpha} \circ \tilde{\pi}_{\ast}$.

\begin{lemma}\label{integrable}
The almost hypercomplex structure $\hat{H}=(\hat{I}_{1},\hat{I}_{2},\hat{I}_{3})$ is integrable, that is, 
$(\hat{M},\hat{H})$ is a hypercomplex manifold.
\begin{proof}
Let  $\tilde{X}$ and $\tilde{Y}$ be projectable vector fields on the total space of the submersion $\tilde{\pi}:\tilde{M}\rightarrow \hat{M}$ and denote by 
$X=\tilde{\pi}_{\ast} \tilde{X}$, $Y=\tilde{\pi}_{\ast} \tilde{Y}$ their projections. Then 
we have $\tilde{\pi}_{\ast}(N^{\tilde{I}_{\alpha}}(\tilde{X},\tilde{Y}))=N^{\hat{I}_{\alpha}}(X,Y)$,
where $N^{\tilde{I}_{\alpha}}$ and $N^{\hat{I}_{\alpha}}$ are the Nijenhuis tensors of 
$\tilde{I}_{\alpha}$ and $\hat{I}_{\alpha}$, respectively. 
Using that $\tilde{I}_\alpha V_1=0$ and $L_{V_1}\tilde{I}_\alpha=0$ (Lemma \ref{invariant_partial}) we see that $N^{\tilde{I}_{\alpha}}(V_{1},\cdot )=0$. 
Since $N^{\tilde{I}_{\alpha}}$ and $N^{\hat{I}_{\alpha}}$ are tensors, 
it is sufficient to show that 
the horizontal component of $N^{\tilde{I}_{\alpha}}(A,B)$ vanishes
for sections $A$ and $B$ of 
$\langle e_{0}^{R}, e_{1}^{R}, e_{2}^{R}, e_{3}^{R} \rangle
\oplus \mathrm{Ker}\, \eta$. 
It is easy to see that
$N^{\tilde{I}_{\alpha}}(e^{R}_{a},e^{R}_{b})=0$ and $N^{\tilde{I}_{\alpha}}(e^{R}_{a},X^{h_{\eta}}{})=0$, for all $a,b\in \{0,\ldots,3\}$. 
So we only need to show 
that the horizontal component of $N^{\tilde{I}_{\alpha}}(X^{h_{\eta}}{},Y^{h_{\eta}}{})$ vanishes, 
i.e.\ the component in $\langle e_{0}^{R}, e_{1}^{R}, e_{2}^{R}, e_{3}^{R} \rangle \
\oplus \mathrm{Ker}\, \eta$. It is given by
\[ \left( [X,Y]+I_{\alpha}^{\prime}[X,I_{\alpha}^{\prime}Y] 
+ I_{\alpha}^{\prime}[I_{\alpha}^{\prime}X, Y]
- [I_{\alpha}^{\prime}X, I_{\alpha}^{\prime}Y] \right)^{h_{\eta}}{}
=0,\]
since $(I_{1}^{\prime},I_{2}^{\prime},I_{3}^{\prime})$ is a hypercomplex structure 
on $M$, for every $z\in \mathbb{H}^*$.
\end{proof}
\end{lemma}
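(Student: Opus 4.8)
The plan is to show that the almost hypercomplex structure $\hat H = (\hat I_1, \hat I_2, \hat I_3)$ is integrable by reducing the vanishing of its Nijenhuis tensors $N^{\hat I_\alpha}$ to the vanishing of the horizontal components of the Nijenhuis tensors $N^{\tilde I_\alpha}$ upstairs on $\tilde M = \mathbb{H}^* \times P$. The key structural fact I would exploit is that for a submersion $\tilde\pi : \tilde M \to \hat M$ with $\tilde\pi_* \circ \tilde I_\alpha = \hat I_\alpha \circ \tilde\pi_*$, projectable vector fields $\tilde X, \tilde Y$ with projections $X = \tilde\pi_* \tilde X$, $Y = \tilde\pi_* \tilde Y$ satisfy $\tilde\pi_*(N^{\tilde I_\alpha}(\tilde X, \tilde Y)) = N^{\hat I_\alpha}(X,Y)$; this follows because $\tilde\pi_*$ intertwines the $\tilde I_\alpha$ with the $\hat I_\alpha$ and commutes with Lie brackets of projectable fields. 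Hence it suffices to prove that the horizontal component (relative to the fibration by integral curves of $V_1$) of $N^{\tilde I_\alpha}$ vanishes.

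First I would dispose of the fiber direction. Since $\tilde I_\alpha V_1 = 0$ (by construction of the $\tilde I_\alpha$) and $L_{V_1}\tilde I_\alpha = 0$ (Lemma~\ref{invariant_partial}), a direct expansion of the Nijenhuis formula gives $N^{\tilde I_\alpha}(V_1, \cdot) = 0$. Because $N^{\tilde I_\alpha}$ is a tensor, it therefore descends to a well-defined object on the complement, and it is enough to evaluate it on sections $A, B$ of the transverse distribution $\langle e_0^R, e_1^R, e_2^R, e_3^R\rangle \oplus \mathrm{Ker}\,\eta$. I would then break this into the three cases according to whether $A, B$ lie in the $\mathbb{H}^*$-part or the $\eta$-horizontal part: the mixed and pure-$\mathbb{H}^*$ cases, namely $N^{\tilde I_\alpha}(e_a^R, e_b^R) = 0$ and $N^{\tilde I_\alpha}(e_a^R, X^{h_\eta}) = 0$, follow from the fact that the $\tilde I_\alpha$ restrict to the standard flat hypercomplex structure on $\mathbb{H}^*$ and that the flow of $e_0^R$ and the left-invariant fields interact well with the horizontal lifts (here one uses that $e_a^R$ generate a tri-holomorphic action, cf.\ Lemma~\ref{e0R:lemma}).

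The remaining and essentially only substantive case is the purely horizontal one, $N^{\tilde I_\alpha}(X^{h_\eta}, Y^{h_\eta})$, where I must show its horizontal component vanishes. Expanding the Nijenhuis tensor
\[
N^{\tilde I_\alpha}(X^{h_\eta}, Y^{h_\eta}) = [X^{h_\eta}, Y^{h_\eta}] + \tilde I_\alpha[\tilde I_\alpha X^{h_\eta}, Y^{h_\eta}] + \tilde I_\alpha[X^{h_\eta}, \tilde I_\alpha Y^{h_\eta}] - [\tilde I_\alpha X^{h_\eta}, \tilde I_\alpha Y^{h_\eta}],
\]
I would use the definition $\tilde I_\alpha X^{h_\eta} = (I'_\alpha X)^{h_\eta}$ together with the bracket identity $[X^{h_\eta}, Y^{h_\eta}] = [X,Y]^{h_\eta} - d\eta(X,Y)X_P$ to collect the horizontal part. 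The curvature (vertical) terms involve $d\eta$ evaluated on the pairs $(X,Y)$, $(I'_\alpha X, Y)$, $(X, I'_\alpha Y)$, $(I'_\alpha X, I'_\alpha Y)$; after applying $\tilde I_\alpha$ these vertical contributions drop out of the \emph{horizontal} component, so the horizontal projection is exactly
\[
\left( [X,Y] + I'_\alpha[X, I'_\alpha Y] + I'_\alpha[I'_\alpha X, Y] - [I'_\alpha X, I'_\alpha Y] \right)^{h_\eta} = \left( N^{I'_\alpha}(X,Y) \right)^{h_\eta}.
\]
This is precisely the horizontal lift of the Nijenhuis tensor of $I'_\alpha$ on $M$, which vanishes because, by \eqref{I_prime}, $(I'_1, I'_2, I'_3)$ is an $\mathrm{SO}(3)$-rotation of the hypercomplex structure $(I_1, I_2, I_3)$ and hence is itself an integrable hypercomplex structure on $M$ for every fixed $z \in \mathbb{H}^*$. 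I expect the main obstacle to be the careful bookkeeping that the $d\eta$-terms genuinely lie in the vertical direction $\langle X_P\rangle$ and therefore do not contribute to the horizontal component; once that is checked the identification with $N^{I'_\alpha}(X,Y)^{h_\eta}$ is immediate and the proof closes.
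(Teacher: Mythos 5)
Your proposal follows the paper's proof step for step — the projection identity for Nijenhuis tensors, the vanishing of $N^{\tilde{I}_\alpha}(V_1,\cdot)$ via $\tilde{I}_\alpha V_1=0$ and $L_{V_1}\tilde{I}_\alpha=0$, the reduction to sections of $\langle e_0^R,\dots,e_3^R\rangle\oplus\mathrm{Ker}\,\eta$, and the identification of the purely horizontal case with the fiberwise Nijenhuis tensor of $I'_\alpha$ — but the one place where you supply a justification that the paper leaves implicit is wrong as stated, and it is exactly the delicate point. You discard the curvature terms on the grounds that they ``genuinely lie in the vertical direction $\langle X_P\rangle$.'' The vertical distribution of $\tilde{\pi}:\tilde{M}\to\hat{M}$ is $\langle V_1\rangle$, not $\langle X_P\rangle$: from $V_1=e_1^L-Z^{h_\eta}-f_1X_P$ one has
\[
X_P=\frac{1}{f_1}\left(e_1^L-Z^{h_\eta}-V_1\right),
\]
so $X_P$ has a nonzero component in the horizontal space $\langle e_0^R,\dots,e_3^R\rangle\oplus\mathrm{Ker}\,\eta$; worse, in the two terms of the Nijenhuis expansion where $\tilde{I}_\alpha$ is applied after the bracket, the curvature contribution becomes $\tilde{I}_\alpha X_P=\frac{1}{f_1}\bigl(\tilde{I}_\alpha e_1^L-(I'_\alpha Z)^{h_\eta}\bigr)$, which lies \emph{entirely} in that horizontal space, since $\tilde{I}_\alpha V_1=0$ and $\mathrm{Im}\,\tilde{I}_\alpha=T\mathbb{H}^*\oplus\mathrm{Ker}\,\eta$.

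Carrying your own expansion through (using the fiberwise integrability of $I'_\alpha$, which is correct) one finds
\[
N^{\tilde{I}_\alpha}(X^{h_\eta},Y^{h_\eta})
=\bigl(d\eta(I'_\alpha X,I'_\alpha Y)-d\eta(X,Y)\bigr)X_P
-\bigl(d\eta(I'_\alpha X,Y)+d\eta(X,I'_\alpha Y)\bigr)\tilde{I}_\alpha X_P ,
\]
and since $X_P$, $\tilde{I}_\alpha X_P$ and $V_1$ are pointwise linearly independent (note $\tilde{I}_\alpha e_1^L$ is never proportional to $e_1^L$), the horizontal component of the right-hand side vanishes \emph{if and only if both coefficients vanish}, i.e.\ if and only if $d\eta$ is of type $(1,1)$ with respect to $I'_\alpha$ for every $z\in\mathbb{H}^*$, equivalently with respect to each of $I_1,I_2,I_3$. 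This is a genuine condition, not a formal consequence of the bracket bookkeeping: it is the same $(1,1)$-condition that appears in Swann's twist construction, and it does hold in the HK/QK setting (there $d\eta(\cdot,\cdot)=-g(S\cdot,\cdot)$ with $S=\nabla Z-I_1$ skew and commuting with all $I_\alpha$), but nothing in your argument derives it from the hypotheses ``$Z$ rotating, $d\Theta=0$, $L_Z\Theta=0$.'' To close the proof you must either establish this $(1,1)$-property of $\Theta-\frac{1}{2}d((\iota_Z\Theta)\circ I_1)$ or impose it. For comparison, the paper's proof is silent at the same spot — it simply asserts that the horizontal component equals the displayed bracket expression — so you have not deviated from its route, but the justification you volunteered for that assertion does not work.
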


Since $L_{V_{1}} e_{0}^{R}=0$, 
we can define a vector field 
$V= \tilde{\pi}_{\ast}{e}_{0}^{R}$ on $\hat{M}$. 
Let $\hat{\nabla}^{0}$ be the Obata connection with respect to $\hat{H}$.

\begin{lemma}\label{Euler}
We have $\hat{\nabla}^{0} V={\rm id}$. 
\end{lemma}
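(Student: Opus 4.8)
The plan is to show that the $(1,1)$-tensor $S:=\hat\nabla^0 V-\mathrm{id}$ vanishes identically, where $\hat\nabla^0 V$ denotes the endomorphism $Y\mapsto\hat\nabla^0_Y V$ of $T\hat M$ (this is well defined and $C^\infty$-linear in $Y$). Throughout I write $W_\alpha:=\hat I_\alpha V$. Since $\tilde\pi_{\ast}e_0^R=V$ and $\tilde\pi_{\ast}\circ\tilde I_\alpha=\hat I_\alpha\circ\tilde\pi_{\ast}$, and since $[e_1^L,e_\alpha^R]=0=[Z_1,e_\alpha^R]$ gives $L_{V_1}e_\alpha^R=0$, each $e_\alpha^R$ descends and $W_\alpha=\tilde\pi_{\ast}e_\alpha^R$ is a well-defined vector field on $\hat M$ with $\hat I_\alpha V=\hat I_\alpha\tilde\pi_{\ast}e_0^R=\tilde\pi_{\ast}e_\alpha^R=W_\alpha$.

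The main tool will be the elementary identity, valid for any torsion-free connection $\nabla$, vector field $U$ and $(1,1)$-tensor $A$,
\[ L_U A=\nabla_U A-[\nabla U,A], \]
where $\nabla U$ is the endomorphism $Y\mapsto\nabla_Y U$ and the bracket is the commutator of endomorphisms. Applied to $\nabla=\hat\nabla^0$ and $A=\hat I_\beta$, and using $\hat\nabla^0\hat I_\beta=0$, this reads $L_U\hat I_\beta=-[\hat\nabla^0 U,\hat I_\beta]$.

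First I would take $U=V$. Projecting Lemma~\ref{e0R:lemma} along $\tilde\pi$ gives $L_V\hat I_\beta=0$, while $\hat\nabla^0 V=\mathrm{id}+S$; hence $[\mathrm{id}+S,\hat I_\beta]=[S,\hat I_\beta]=0$, so $S$ commutes with every $\hat I_\beta$. Next I would take $U=W_\alpha$. Since $\hat\nabla^0\hat I_\alpha=0$ we have $\hat\nabla^0 W_\alpha=\hat I_\alpha\circ\hat\nabla^0 V=\hat I_\alpha(\mathrm{id}+S)$, and combining $[\hat I_\alpha,\hat I_\beta]=2\hat I_\gamma$ with $[S,\hat I_\beta]=0$ yields, for any cyclic $(\alpha,\beta,\gamma)$,
\[ L_{W_\alpha}\hat I_\beta=-[\hat I_\alpha(\mathrm{id}+S),\hat I_\beta]=-2\hat I_\gamma(\mathrm{id}+S). \]

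It then remains to compute $L_{W_\alpha}\hat I_\beta$ directly, which via $\tilde\pi$ reduces to establishing $L_{e_\alpha^R}\tilde I_\beta=-2\tilde I_\gamma$ on $\tilde M$. On $\langle e_0^R,\ldots,e_3^R\rangle$ this follows from the commutators $[e_\alpha^R,e_\beta^R]=-2e_\gamma^R$ (and $[e_0^R,e_\alpha^R]=0$); on $\langle V_1\rangle$ both sides annihilate $V_1$; and on $\mathrm{Ker}\,\eta$ one differentiates the $z$-dependence of $\tilde I_\beta=\sum_\delta A_{\beta\delta}(z)I_\delta$ along the flow $z\mapsto\exp(ti_\alpha)z$ of $e_\alpha^R$, exactly as in the proof of Lemma~\ref{invariant_partial}, the relevant derivative being $\mathrm{ad}_{i_\alpha}$ acting on $\mathrm{Ad}_z$. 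Comparing with the previous display gives $-2\hat I_\gamma=-2\hat I_\gamma(\mathrm{id}+S)$, so $\hat I_\gamma S=0$ and therefore $S=0$, that is $\hat\nabla^0 V=\mathrm{id}$. The one genuine computation here, and hence the main obstacle, is the verification of $L_{e_\alpha^R}\tilde I_\beta=-2\tilde I_\gamma$ on the horizontal distribution, where the twisting by $\mathrm{Ad}_z$ must be differentiated; but this is a routine variant of the matrix computation already carried out for Lemma~\ref{invariant_partial}.
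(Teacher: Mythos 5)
Your proof is correct, and it takes a genuinely different route from the paper's. The paper proves Lemma \ref{Euler} computationally: it inserts $V=\tilde{\pi}_{\ast}e_0^R$ into the explicit $12$-term formula for the Obata connection from \cite{AM}, simplifies via Lemma \ref{e0R:lemma}, and evaluates the resulting bracket expression on the generators $Y=e_a^R$ and $Y=U^{h_{\eta}}$ to obtain $12\,\tilde{\pi}_{\ast}Y$. You never touch the explicit Obata formula: using only torsion-freeness and $\hat{\nabla}^0\hat{I}_\alpha=0$ through the identity $L_U A=\hat{\nabla}^0_U A-[\hat{\nabla}^0U,A]$, you pin down $S=\hat{\nabla}^0V-\mathrm{id}$ by two Lie-derivative facts: $L_V\hat{I}_\beta=0$ (the projection of Lemma \ref{e0R:lemma}) forces $[S,\hat{I}_\beta]=0$, and $L_{\hat{I}_\alpha V}\hat{I}_\beta=-2\hat{I}_\gamma$ then forces $\hat{I}_\gamma S=0$, hence $S=0$. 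In effect you prove the converse of Lemma \ref{derivative_D}: the Lie derivatives of $(\hat{I}_1,\hat{I}_2,\hat{I}_3)$ along the projected $\mathbb{H}^{\ast}$-frame characterize the Euler condition, which also explains conceptually why $\hat{M}$ is a cone (compare the Swann-bundle identities $L_{Z_\alpha}I_\beta=-2I_\gamma$ in Example \ref{ex_sw}). The cost is the identity $L_{e_\alpha^R}\tilde{I}_\beta=-2\tilde{I}_\gamma$, which you rightly isolate as the one real computation; it does hold on all three pieces of $T\tilde{M}$ --- on $\langle e_0^R,\dots,e_3^R\rangle$ from the commutation relations, trivially on $\langle V_1\rangle$, and on $\mathrm{Ker}\,\eta$ because $\frac{d}{dt}\big|_{t=0}\mathrm{Ad}_{(\exp(t i_\alpha)z)^{-1}}=-\mathrm{Ad}_{z^{-1}}\circ\mathrm{ad}_{i_\alpha}$ gives $L_{e_\alpha^R}I'_\beta=-2I'_\gamma$, provided one reads the representation matrix $A$ with the standard column convention, so that $I'_\beta$ corresponds to $\mathrm{Ad}_{z^{-1}}(i_\beta)$; with the opposite row convention the sign on $\mathrm{Ker}\,\eta$ comes out wrong, so the same care is needed here as in Lemma \ref{invariant_partial}. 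Your projection steps are legitimate since $[V_1,e_a^R]=0$ makes $e_a^R$ projectable and $\tilde{I}_\beta$ preserves projectable fields by $L_{V_1}\tilde{I}_\beta=0$ together with $\tilde{I}_\beta V_1=0$. What the paper's route buys is brevity once the Alekseevsky--Marchiafava formula is accepted; what yours buys is independence from that formula and a structural, uniqueness-flavored argument, at the price of one extra but routine Lie-derivative computation.
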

\begin{proof}
Using the explicit representation of the Obata connection (see \cite{AM} for example) 
and Lemma \ref{e0R:lemma},
we have
\begin{align*}
12 (\hat{\nabla}^{0}_{\tilde{\pi}_{\ast}Y} \tilde{\pi}_{\ast} e_{0}^{R})
=&\tilde{\pi}_{\ast} \left( \sum_{(\alpha,\beta,\gamma)} (\tilde{I}_{\alpha} [\tilde{I}_{\beta} Y,e_{\gamma}^{R}] +\tilde{I}_{\alpha} [e_{\beta}^{R}, \tilde{I}_{\gamma} Y] )
+2 \sum_{\alpha =1}^{3} \tilde{I}_{\alpha}[e_{\alpha}^{R},Y] \right), 
\end{align*}
where $(\alpha,\beta,\gamma)$ indicates sum over cyclic permutations of $(1,2,3)$ 
and $Y$ is a projectable vector field on 
$\tilde{M}$ commuting with $e_0^R$. Evaluating the expression on $Y=e_{a}^{R}$ and $Y=U^{h_{\eta}}{}$, we obtain $12\tilde{\pi}_\ast Y$. 
\end{proof}

As a consequence, by Lemmas \ref{integrable} and \ref{Euler}, 
we can conclude

\begin{theorem}[Conification]\label{conification}
Let $M$ be a hypercomplex manifold with a hypercomplex structure 
$H=(I_{1},I_{2},I_{3})$, a closed two-form $\Theta$ and 
a rotating vector field $Z$ such that $L_{Z} \Theta=0$. 
Let $f$ be a smooth function on $M$ such that $df = -\iota_{Z} \Theta$ and 
assume $f_{1}:=f-(1/2)\Theta(Z,I_{1}Z)$ does nowhere vanish. 
Consider a principal $\mathrm{U}(1)$-bundle $\pi:P \to M$ with a connection form $\eta$ 
whose curvature form is 
\[ d \eta
   =\pi^{\ast} \left( \Theta-
   \frac{1}{2} d ((\iota_{Z} \Theta) \circ I_{1}) \right). \] 
If the quotient map $\tilde{\pi}:\tilde{M} \to \hat{M}$ 
is a submersion, then $(\hat{M},\hat{H})$ is a conical hypercomplex manifold with 
the Euler vector field $V=\tilde{\pi}_{\ast}{e}_{0}^{R}$. 
\end{theorem}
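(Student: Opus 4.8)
The plan is to verify the two defining conditions of a conical hypercomplex manifold (Definition~\ref{conic_hyp_def}) for the triple $(\hat{M}, \hat{H}, V)$: that $\hat{H} = (\hat{I}_1, \hat{I}_2, \hat{I}_3)$ is an integrable hypercomplex structure on $\hat{M}$, and that the associated Obata connection $\hat{\nabla}^0$ satisfies $\hat{\nabla}^0 V = \mathrm{id}$. Both statements have already been prepared in the preceding lemmas, so the proof reduces to checking that the relevant objects descend to the quotient $\hat{M}$ and then assembling the results.

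First I would address well-definedness of all structures on $\hat{M}$. The almost hypercomplex structure $(\tilde{I}_1, \tilde{I}_2, \tilde{I}_3)$ on $\tilde{M}$ annihilates $V_1$ (since $\tilde{I}_\alpha V_1 = 0$) and is invariant under its flow (Lemma~\ref{invariant_partial}, which uses that $Z$ is rotating and $L_Z \Theta = 0$); consequently it projects under the submersion $\tilde{\pi}$ to a well-defined almost hypercomplex structure $\hat{H}$ characterized by $\tilde{\pi}_* \circ \tilde{I}_\alpha = \hat{I}_\alpha \circ \tilde{\pi}_*$. Likewise $L_{V_1} e_0^R = 0$ guarantees that $V := \tilde{\pi}_* e_0^R$ is a well-defined vector field on $\hat{M}$.

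Next I would invoke the two key lemmas. By Lemma~\ref{integrable}, the almost hypercomplex structure $\hat{H}$ is integrable, so $(\hat{M}, \hat{H})$ is a genuine hypercomplex manifold and its Obata connection $\hat{\nabla}^0$ exists and is unique. Finally, Lemma~\ref{Euler} computes $\hat{\nabla}^0 V = \mathrm{id}$, which is exactly the conical condition of Definition~\ref{conic_hyp_def}. Combining these establishes that $(\hat{M}, \hat{H}, V)$ is a conical hypercomplex manifold with Euler vector field $V$.

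The main obstacle is not in this final assembly, which is a short syllogism, but was already resolved inside the lemmas. In Lemma~\ref{integrable} one must show that the Nijenhuis tensor of each $\tilde{I}_\alpha$ projects to zero; the essential point is that $\tilde{I}_\alpha$ restricts on $\mathrm{Ker}\,\eta$ to the horizontal lift of $I'_\alpha = \sum_\beta A_{\alpha\beta} I_\beta$, which for every fixed $z \in \mathbb{H}^\ast$ is again a hypercomplex structure on $M$ because $A = \mathrm{Ad}_z|_{\mathrm{Im}\,\mathbb{H}} \in \mathrm{SO}(3)$. In Lemma~\ref{Euler} the identity $\hat{\nabla}^0 V = \mathrm{id}$ rests on the explicit Obata formula together with $L_{e_0^R}\tilde{I}_\alpha = 0$ (Lemma~\ref{e0R:lemma}). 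From the vantage point of the theorem itself, the conceptual content is that the hypotheses --- $Z$ rotating, $L_Z \Theta = 0$, the prescribed curvature~(\ref{curvature_assumption}), and the nowhere vanishing of $f_1$ --- are precisely what make the $V_1$-invariance and the descent of all tensors work, so that once $\hat{M}$ is assumed smooth everything passes to it.
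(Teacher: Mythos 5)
Your proposal is correct and follows exactly the paper's route: the paper states Theorem~\ref{conification} as an immediate consequence of Lemmas~\ref{invariant_partial}, \ref{integrable} and \ref{Euler}, with the descent of $\tilde{I}_\alpha$ and $e_0^R$ through the submersion $\tilde{\pi}$ justified by $V_1$-invariance just as you describe. Your additional commentary on where the real work lies (inside Lemmas~\ref{integrable} and \ref{Euler}) accurately reflects the structure of the paper's argument.
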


\begin{remark}\label{rem1}
{\rm 
The assumption that $\tilde{\pi}$ is a submersion 
is always satisfied locally by considering local $1$-parameter subgroup generated by $V_{1}$, 
since the vector field $V_{1}$ has no zeros. 
Note that ``submersion" requires that the quotient space is a smooth manifold. 
}
\end{remark}

We say that $(\hat{M},\hat{H},V)$ is the {\cmssl conification} of $(M,H,Z,f,\Theta)$ associated with 
$(P,\eta)$ 
and denote it by $(\hat{M},\hat{H},V)={\cal C}_{(P,\eta)}(M,H,Z,f,\Theta)$
(or simply $\hat{M}={\cal C}_{P}(M)$ if there is no confusion).

\section{The hypercomplex/quaternionic-correspondence}
\setcounter{equation}{0}
Building on the conification construction of the last section
we will now construct a quaternionic manifold $\bar M$ of dimension $\dim \bar M = \dim M$
from a hypercomplex manifold $M$ with rotating vector field. The resulting quaternionic manifold
is endowed with a torsion-free quaternionic connection and an affine quaternionic vector field $X$.
 
The space of leaves of  the integrable distribution 
$\mathcal D:=\langle V, \hat{I}_{1}V, \hat{I}_{2}V, \hat{I}_{3}V \rangle$ on $\hat{M}$
is denoted by $\bar{M}$. We shall show that 
$\bar{M}=\mathcal{C}_{P}(M) / \mathcal D$ is a quaternionic manifold, 
which is the main theorem of this paper. 
In addition, we show that $\bar{M}$ has a natural quaternionic connection 
$\bar{\nabla}$ and 
an affine quaternionic vector field $X$ induced 
from the fundamental vector field $X_P$ of $P\rightarrow M$.

Using Theorem \ref{conification} and a similar argument as 
in \cite[Theorem 2.1]{PPS}, we prove Theorem~\ref{gene_hq_corresp}.

\begin{theorem}[H/Q-correspondence]\label{gene_hq_corresp}
Let $M$ be a hypercomplex manifold with a hypercomplex structure 
$H=(I_{1},I_{2},I_{3})$, a closed two-form $\Theta$ and a rotating vector field $Z$ 
such that $L_{Z} \Theta=0$. 
Let $f$ be a smooth function on $M$ such that $df = -\iota_{Z} \Theta$ and assume that 
$f_{1}:=f-(1/2)\Theta(Z,I_{1}Z)$ does nowhere vanish. 
Consider a principal $\mathrm{U}(1)$-bundle $\pi:P \to M$ with a connection form $\eta$ 
whose curvature form is 
\[ d \eta
   =\pi^{\ast} \left( \Theta-
   \frac{1}{2} d ((\iota_{Z} \Theta) \circ I_{1}) \right). \] 
If both quotient maps $\tilde{\pi}:\tilde{M} \to \hat{M}$ and 
$\hat{\pi}:\hat{M} \to \bar{M}$ defined above 
are submersions, then there exists an induced quaternionic structure $\bar{Q}$ on $\bar{M}$.
\end{theorem}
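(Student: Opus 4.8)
The plan is to run the Swann-bundle construction of Example~\ref{ex_sw} in reverse, adapting the argument of \cite[Theorem 2.1]{PPS}. By Theorem~\ref{conification} the space $(\hat M,\hat H,V)$ is a conical hypercomplex manifold, so its Obata connection satisfies $\hat\nabla^0 V=\mathrm{id}$, and $\mathcal D=\langle V,\hat I_1V,\hat I_2V,\hat I_3V\rangle$ is precisely the (integrable) vertical distribution of the submersion $\hat\pi:\hat M\to\bar M$. The two structural facts I would record at the outset are that the span of the hypercomplex structure is flow-invariant along the fibres, $L_V\hat I_\alpha=0$ and $L_{\hat I_\alpha V}\hat I_\beta=-2\hat I_\gamma$ (Lemma~\ref{derivative_D}), and that the Obata connection is itself flow-invariant, $L_V\hat\nabla^0=L_{\hat I_\alpha V}\hat\nabla^0=0$ (Lemma~\ref{derivative_Obata}). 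These invariances are what will allow objects to descend through $\hat\pi$.

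First I would descend the quaternionic structure. Each $\hat I_\alpha$ preserves $\mathcal D$: this is immediate from the quaternionic relations applied to the generators, since $\hat I_\alpha(\hat I_\alpha V)=-V$ and $\hat I_\alpha(\hat I_\beta V)=\pm\hat I_\gamma V$ all lie in $\mathcal D$. Hence each $\hat I_\alpha$ induces an endomorphism of the quotient bundle $T\hat M/\mathcal D\cong\hat\pi^{*}T\bar M$. The individual $\hat I_\alpha$ are not invariant along the leaves, but their span $\hat Q=\langle\hat I_1,\hat I_2,\hat I_3\rangle$ is, by Lemma~\ref{derivative_D}; therefore the rank-$3$ family of induced endomorphisms is constant along the leaves of $\mathcal D$ and descends to a rank-$3$ subbundle $\bar Q\subset\mathrm{End}(T\bar M)$, locally spanned by an admissible frame satisfying the relations (\ref{quaternionic}).

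The substantial step is the construction of a torsion-free quaternionic connection $\bar\nabla$. I would fix a $\hat Q$-invariant complement $\mathcal H$ to $\mathcal D$ in $T\hat M$ which is also invariant under the flows generating $\mathcal D$; a canonical choice is $\mathcal H=\tilde\pi_{*}(\mathrm{Ker}\,\eta)$ coming from the conification, which is $\hat Q$-invariant because the $\tilde I_\alpha$ preserve $\mathrm{Ker}\,\eta$, and $\mathcal D$-invariant because the flows of the $e_a^{R}$ fix the $P$-factor. For projectable horizontal lifts $X,Y\in\Gamma(\mathcal H)$ of vector fields $\bar X,\bar Y$ on $\bar M$, I set $\bar\nabla_{\bar X}\bar Y:=\hat\pi_{*}\big(\mathrm{pr}_{\mathcal H}\,\hat\nabla^0_XY\big)$. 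The invariance of $\hat\nabla^0$, of $\hat Q$, and of $\mathcal H$ under the fibre flows, combined with projectability of the lifts, makes this independent of the chosen lift and of the point in the fibre, so $\bar\nabla$ is a well-defined connection. Torsion-freeness then falls out quickly: since $\hat\nabla^0$ is torsion-free and the lifts are projectable, $\bar\nabla_{\bar X}\bar Y-\bar\nabla_{\bar Y}\bar X=\hat\pi_{*}(\mathrm{pr}_{\mathcal H}[X,Y])=\hat\pi_{*}[X,Y]=[\bar X,\bar Y]$.

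The main obstacle lies in this last step, and specifically in the fact that the Obata connection does not descend naively: because $\hat\nabla^0_VY=Y+[V,Y]\notin\mathcal D$, the Euler direction contributes a degree-one homogeneity that survives a plain projection. The horizontal projection along the $\hat Q$- and $\mathcal D$-invariant distribution $\mathcal H$ is exactly what removes this radial term, and the delicate bookkeeping is to show that the remaining mixing (O'Neill-type) terms between $\mathcal H$ and $\mathcal D$ are $\hat Q$-valued. Here I would use $\hat\nabla^0\hat I_\alpha=0$ together with the structure equations $\hat\nabla^0_UV=U$ and $\hat\nabla^0_U(\hat I_\alpha V)=\hat I_\alpha U$ for horizontal $U$, which force the vertical correction to be expressed through the $\hat I_\alpha$; this yields $\bar\nabla_{\bar X}\bar I_\alpha\in\Gamma(\bar Q)$, so that $\bar\nabla$ preserves $\bar Q$ and is genuinely quaternionic. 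Establishing that these residual terms close up inside $\hat Q$ — rather than leaking into a generic endomorphism — is the crux of the argument, and is where the conical condition $\hat\nabla^0V=\mathrm{id}$ is used decisively.
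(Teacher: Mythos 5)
Your proposal is correct and follows essentially the same route as the paper: both arguments rest on Theorem~\ref{conification} together with Lemmas~\ref{derivative_D} and \ref{derivative_Obata}, both choose a horizontal complement to $\mathcal{D}$ that is invariant under the $\hat{I}_\alpha$ and along the leaves, and both define $\bar{\nabla}$ by pushing the Obata connection forward through $\hat{\pi}$ --- your formula agrees with (\ref{q_conn_barM}), since $\hat{\pi}_{\ast}$ already annihilates vertical vectors and so your projection $\mathrm{pr}_{\mathcal{H}}$ is redundant. The presentational differences are harmless: your complement $\mathcal{H}=\tilde{\pi}_{\ast}(\mathrm{Ker}\,\eta)$ coincides with the paper's $\hat{\mathcal{H}}=\mathrm{Ker}\,\hat{\theta}^{\prime}$ (one checks $\mathrm{Ker}\,\eta\subset\mathrm{Ker}\,\tilde{\theta}^{\prime}$ and compares ranks; note that the descent of $\mathrm{Ker}\,\eta$ through $\tilde{\pi}$, which you leave implicit, is exactly Lemma~\ref{derivative_V1}), and descending $\bar{Q}$ via the quotient bundle $T\hat{M}/\mathcal{D}$ is equivalent to the paper's use of local sections of $\hat{\pi}$. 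The one place where you misjudge the proof is the final step: no O'Neill-type bookkeeping is needed, and the conical condition is not ``used decisively'' there. Since each $\hat{I}_\alpha$ preserves $\mathcal{D}$, every vertical mixing term is killed by $\hat{\pi}_{\ast}$ before it can leak anywhere; the paper simply writes $(IW)^{h_{\hat{\theta}^{\prime}}}=\sum_{\alpha} a_\alpha \hat{I}_\alpha W^{h_{\hat{\theta}^{\prime}}}$ for $I\in\Gamma(\bar{Q})$ and uses $\hat{\nabla}^{0}\hat{I}_\alpha=0$ to obtain $(\bar{\nabla}_Y I)W=\hat{\pi}_{\ast}\bigl(\sum_{\alpha}(Y^{h_{\hat{\theta}^{\prime}}}a_\alpha)\hat{I}_\alpha W^{h_{\hat{\theta}^{\prime}}}\bigr)$, which visibly lies in $\bar{Q}$ applied to $W$ --- a two-line computation in which your structure equations $\hat{\nabla}^{0}_U V=U$ and $\hat{\nabla}^{0}_U(\hat{I}_\alpha V)=\hat{I}_\alpha U$ play no role. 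Conicality enters only upstream, through Lemmas~\ref{derivative_D} and \ref{derivative_Obata}, which are what make $\bar{Q}$, $\mathcal{H}$ and $\bar{\nabla}$ well defined along the leaves of $\mathcal{D}$.
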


\begin{proof}
As we proved in Theorem \ref{conification}, $\hat{M}=\mathcal{C}_{P}(M)$ 
is a conical hypercomplex manifold with 
the hypercomplex structure $\hat{H}=(\hat{I}_{1},\hat{I}_{2},\hat{I}_{3})$. 
Let $\varphi=\sum_{a=0}^{3} \varphi_{a}i_{a}$  
($(i_{0},i_{1},i_{2},i_{3})=(1,i,j,k)$) be the right-invariant Maurer-Cartan form on 
$\mathbb{H}^{\ast}$ and extend it with the same letter to $\tilde{M}$ as $\varphi|_{TP}=0$.   
Set $\tilde{\theta}_{0}=\varphi_{0}$.  
Since $L_{V_{1}} \tilde{\theta}_{0}=0$, we can define 
the one-form $\hat{\theta}_{0}$ on $\hat{M}$ 
such that $\tilde{\theta}_{0}=\tilde{\pi}^{\ast} \hat{\theta}_{0}$. 
We define 
$\hat{\theta}^{\prime}=\hat{\theta}_{0}+\sum_{\alpha=1}^3 (\hat{\theta}_{0} \circ \hat{I}_{\alpha})i_{\alpha}$ 
and take the Euler vector field $V$ on $\hat{M}$ as in Theorem \ref{conification}. 
Here define an $\hat{I}_{\alpha}$-invariant distribution 
\[ \mathcal{\hat{H}}:=\mathrm{Ker} \, \hat{\theta}^{\prime}. \]
It holds that $T \hat{M}=\mathcal{D} \oplus \mathcal{\hat{H}}$. 
Since $L_{V} \hat{\theta}^{\prime}=0$ and 
$L_{\hat{I}_{\alpha}V} \hat{\theta}^{\prime}
=2 (\hat{\theta}_{0} \circ \hat{I}_{\beta}) i_{\gamma}
-2 (\hat{\theta}_{0} \circ \hat{I}_{\gamma}) i_{\beta}$ 
for any cyclic permutation $(\alpha,\beta,\gamma)$ 
(these are checked by straightforward calculations), 
the distribution $\mathcal{\hat{H}}$ is invariant along leaves of $\mathcal{D}$.
Since $\hat{\pi}$ is a submersion, there exist a neighborhood $\mathcal{U} \subset \bar{M}$ of 
$x \in \bar{M}$ and a section $s:\mathcal{U} \to \hat{M}$.
Then we can define 
\[ \bar{I}_{\alpha}(Y)
:=\hat{\pi}_{\ast}(\hat{I}_{\alpha}(Y^{h_{\hat{\theta}^{\prime}}}_{s(y)})) \]
for $y \in \mathcal{U}$, where $Y \in T_{y} \bar{M}$ and 
$Y^{h_{\hat{\theta}^{\prime}}}$ 
is the $\hat{\theta}^{\prime}$-horizontal lift of $Y$ 
with respect to $\mathcal{\hat{H}}$. 
Although each $\bar{I}_{\alpha}$ depends on the sections, 
the subbundle $\bar{Q}=\langle \bar{I}_{1}, \bar{I}_{2}, \bar{I}_{3} \rangle 
\subset \mathrm{End}(T \bar{M})$ is independent of the section by Lemma \ref{derivative_D}. 
This means that $(\bar{M},\bar{Q})$ is an almost quaternionic manifold. 

Next we show that there exists a torsion-free connection which preserves $\bar{Q}$. 
We define a connection $\bar{\nabla}$ on $\bar{M}$ by 
\begin{align}\label{q_conn_barM}
\bar{\nabla}_{Y} W &= \hat{\pi}_{\ast} (\hat{\nabla}^{0}_{Y^{h_{\hat{\theta}^{\prime}}}} W^{h_{\hat{\theta}^{\prime}}}),\quad Y, W \in \Gamma(T\bar{M}), 
\end{align}
where $\hat{\nabla}^{0}$ is the Obata connection of $\hat{M}$. 
Note that $\bar{\nabla}$ is well-defined by Lemma \ref{derivative_Obata}. 
Since the Obata connection is torsion-free, then so is $\bar{\nabla}$. 
To show that $\bar{\nabla}$ preserves $\bar{Q}$, we consider $I \in \Gamma(\bar{Q})$. Then $(IW)^{h_{\hat{\theta}^{\prime}}}
=\sum_{\alpha=1}^{3} a_{\alpha} \hat{I}_{\alpha} W^{h_{\hat{\theta}^{\prime}}}$ 
for some functions $a_\alpha$ with $\sum_{\alpha=1}^{3} a_{\alpha}^{2}=1$, 
which implies 
\[
(\bar{\nabla}_{Y} I)W=\hat{\pi}_{\ast}
(\sum_{\alpha=1}^{3} (Y^{h_{\hat{\theta}^{\prime}}} a_{\alpha})\hat{I}_{\alpha}W^{h_{\hat{\theta}^{\prime}}} ), 
\]
showing that $\bar{\nabla}$ preserves $\bar{Q}$. 
Therefore $(\bar{M},\bar{Q})$ is a quaternionic manifold. 
\end{proof}

\begin{remark}\label{rem2}
{\rm 
The assumption that $\hat{\pi}$ is a submersion 
is always satisfied locally. 
}
\end{remark}

Next we shall show that our construction induces 
a vector field $X$ which is an affine quaternionic vector field of $(\bar{M},\bar{Q},\bar{\nabla})$, where $\bar{\nabla}$ is given by (\ref{q_conn_barM}).

\begin{lemma}\label{invariance_by_X1}
We have $L_{V_{1}} X_{P}=0$ and $L_{X_{P}} \tilde{I}_{\alpha}=0$.
\end{lemma}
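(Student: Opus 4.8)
The plan is to verify both identities by unwinding the definitions of $V_1$, $X_P$, and $\tilde{I}_\alpha$ on the product $\tilde{M} = \mathbb{H}^* \times P$. Recall that $V_1 = e_1^L - Z_1$ with $Z_1 = Z^{h_\eta} + f_1 X_P$, and that $X_P$ is the fundamental vector field of the principal $\mathrm{U}(1)$-action on $P$, lifted to $\tilde{M}$ in the obvious way (tangent to the $P$-factor). First I would establish $L_{V_1}X_P = [V_1, X_P] = 0$ by splitting the bracket: since $e_1^L$ lives on the $\mathbb{H}^*$-factor and $X_P$ on the $P$-factor, we have $[e_1^L, X_P]=0$; it then remains to show $[Z_1, X_P]=0$. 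This is precisely the content of the earlier Remark noting $[X_P, Z_1]=0$, which follows because $X_P$ preserves $\eta$ (so it commutes with the horizontal lift $Z^{h_\eta}$) and because $f_1 = \pi^* f_1$ is a basic function (hence $X_P f_1 = 0$), giving $[Z_1, X_P] = [Z^{h_\eta}, X_P] - (X_P f_1) X_P = 0$.

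Next I would prove $L_{X_P}\tilde{I}_\alpha = 0$ by checking it on a frame adapted to the decomposition $T\tilde{M} = \langle V_1\rangle \oplus \langle e_0^R, e_1^R, e_2^R, e_3^R\rangle \oplus \mathrm{Ker}\,\eta$. The flow of $X_P$ is the principal $\mathrm{U}(1)$-action on the $P$-factor, acting trivially on $\mathbb{H}^*$; the key observations are that this flow preserves each of the three summands of the decomposition and acts trivially on the $\mathbb{H}^*$-directions. On the vectors $e_a^R$ ($a=0,\dots,3$), which are lifted from $\mathbb{H}^*$ and hence $X_P$-invariant, $\tilde{I}_\alpha$ acts by the fixed formulas $\tilde{I}_\alpha e_0^R = e_\alpha^R$ etc., so $(L_{X_P}\tilde{I}_\alpha)e_a^R = 0$ immediately. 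Since $L_{X_P}V_1 = -L_{V_1}X_P = 0$ by the first part and $\tilde{I}_\alpha V_1 = 0$, we also get $(L_{X_P}\tilde{I}_\alpha)V_1 = 0$.

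The remaining and only substantive case is the horizontal directions $Y^{h_\eta}$, where I must show $(L_{X_P}\tilde{I}_\alpha)(Y^{h_\eta}) = 0$. Here $\tilde{I}_\alpha$ is defined via the lift of $I'_\alpha = \sum_\beta A_{\alpha\beta}I_\beta$, where $A = \mathrm{Ad}_z|_{\mathrm{Im}\,\mathbb{H}}$ depends only on the $\mathbb{H}^*$-factor. Because $X_P$ is tangent to $P$ and acts trivially on $\mathbb{H}^*$, the matrix $A$ is constant along the flow of $X_P$, so $\tilde{I}_\alpha|_{\mathrm{Ker}\,\eta}$ is invariant under the principal action provided the horizontal lift operation itself commutes with the flow — which holds because $X_P$ preserves the connection $\eta$ and hence the horizontal distribution $\mathrm{Ker}\,\eta$. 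Concretely, I would compute $[X_P, Y^{h_\eta}] = [X_P, Y]^{h_\eta} = 0$ for $Y$ a basic (projectable) vector field on $P$, using that horizontal lifts of basic fields are $X_P$-invariant, and then observe that $\tilde{I}_\alpha(Y^{h_\eta}) = (I'_\alpha Y)^{h_\eta}$ is again such a lift with $X_P$-invariant coefficients. Putting these together gives $(L_{X_P}\tilde{I}_\alpha)(Y^{h_\eta}) = 0$. The main obstacle I anticipate is bookkeeping on the horizontal piece: one must be careful that $\tilde{I}_\alpha$ mixes the $\mathbb{H}^*$-dependent matrix $A$ with the $P$-dependent horizontal lift, so I would isolate the fact that $X_P$ fixes $A$ and preserves $\mathrm{Ker}\,\eta$, reducing the computation to the invariance of horizontal lifts of basic fields under a connection-preserving flow.
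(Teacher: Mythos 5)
Your proof is correct and follows essentially the same route as the paper, which disposes of $L_{V_1}X_P=0$ as a ``straightforward calculation'' (exactly your bracket computation using $[e_1^L,X_P]=0$, $[Z^{h_\eta},X_P]=0$ and $X_Pf_1=0$ since $f_1$ is basic) and proves $L_{X_P}\tilde{I}_\alpha=0$ via the single identity $[X_P,\tilde{I}_\alpha Y^{h_\eta}]=[X_P,(I'_\alpha Y)^{h_\eta}]=0$, which is precisely your key step on the horizontal directions. You simply spell out the details the paper leaves implicit (the case-by-case check on $V_1$, $e_a^R$ and $\mathrm{Ker}\,\eta$, and the observation that $A=\mathrm{Ad}_z|_{\mathrm{Im}\,\mathbb{H}}$ is constant along the flow of $X_P$), all of which is sound.
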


\begin{proof}
The first equation can be checked by a straightforward calculation. 
The second follows from $[X_{P},\tilde{I}_{\alpha}Y^{h_{\eta}}]
=[X_P,(I_\alpha'Y)^{h_{\eta}}]=0$.
\end{proof}

By Lemma \ref{invariance_by_X1}, we can define a vector field $\widehat{X_{P}}:=\tilde{\pi}_{\ast}X_{P}$
on $\hat{M}$. Moreover $\widehat{X_{P}}$ satisfies the following.

\begin{lemma}\label{invariance_by_X2}
We have $L_{\widehat{X_{P}}} \hat{I}_{\alpha}=0$, in addition, $L_{\widehat{X_{P}}} \hat{\nabla}^{0}=0$. 
\end{lemma}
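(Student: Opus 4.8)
The plan is to descend the invariance properties established on $\tilde{M}$ in Lemma~\ref{invariance_by_X1} to the quotient $\hat{M}$, and then to exploit the uniqueness of the Obata connection. The key structural fact I would use repeatedly is that the Lie derivative of a $\tilde{\pi}$-projectable tensor field along a $\tilde{\pi}$-projectable vector field is again projectable and descends to the Lie derivative of the respective projections. Here $X_{P}$ is projectable because $L_{V_{1}}X_{P}=0$ (Lemma~\ref{invariance_by_X1}), with projection $\widehat{X_{P}}=\tilde{\pi}_{\ast}X_{P}$; and each $\tilde{I}_{\alpha}$ is projectable with projection $\hat{I}_{\alpha}$, which is precisely the fact used to define $\hat{I}_{\alpha}$ from $L_{V_{1}}\tilde{I}_{\alpha}=0$ and $\tilde{I}_{\alpha}V_{1}=0$ (Lemma~\ref{invariant_partial}).

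For the first identity I would evaluate on a projectable vector field $\tilde{Y}$ with projection $Y=\tilde{\pi}_{\ast}\tilde{Y}$ and compute
\[
(L_{X_{P}}\tilde{I}_{\alpha})\tilde{Y}=[X_{P},\tilde{I}_{\alpha}\tilde{Y}]-\tilde{I}_{\alpha}[X_{P},\tilde{Y}].
\]
Since $\tilde{I}_{\alpha}\tilde{Y}$ is projectable with projection $\hat{I}_{\alpha}Y$, and the bracket of projectable fields is projectable with projection the bracket of the projections, applying $\tilde{\pi}_{\ast}$ gives $[\widehat{X_{P}},\hat{I}_{\alpha}Y]-\hat{I}_{\alpha}[\widehat{X_{P}},Y]=(L_{\widehat{X_{P}}}\hat{I}_{\alpha})Y$. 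By Lemma~\ref{invariance_by_X1} the left-hand side vanishes, so $(L_{\widehat{X_{P}}}\hat{I}_{\alpha})Y=0$ for every projectable $Y$; tensoriality of $L_{\widehat{X_{P}}}\hat{I}_{\alpha}$ then yields $L_{\widehat{X_{P}}}\hat{I}_{\alpha}=0$.

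For the second identity I would argue exactly as in the remark following Lemma~\ref{derivative_Obata}, replacing the Ricci computation by the uniqueness of the Obata connection. Having just shown $L_{\widehat{X_{P}}}\hat{I}_{\alpha}=0$, the (local) flow $\psi_{t}$ of $\widehat{X_{P}}$ preserves the hypercomplex structure $\hat{H}$. Hence $\psi_{t}^{\ast}\hat{\nabla}^{0}$ is again a torsion-free connection for which $\hat{I}_{1},\hat{I}_{2},\hat{I}_{3}$ are parallel, and the uniqueness of the Obata connection forces $\psi_{t}^{\ast}\hat{\nabla}^{0}=\hat{\nabla}^{0}$. Differentiating at $t=0$ gives $L_{\widehat{X_{P}}}\hat{\nabla}^{0}=0$.

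The only genuinely delicate point is the bookkeeping in the second paragraph: one must check that $X_{P}$, the $\tilde{I}_{\alpha}$, and a chosen frame of projectable vector fields are all honestly projectable, so that the descent of brackets and Lie derivatives is legitimate. Once this is in place the two identities follow formally, and an alternative to the uniqueness argument---mirroring Lemma~\ref{derivative_Obata} by checking $Ric^{\hat{\nabla}^{0}}(\widehat{X_{P}},\,\cdot\,)=0$ via the skew-symmetry and Hermitian property of the Obata Ricci tensor together with \cite[Proposition 4.2]{CH}---is available should an explicit verification be preferred.
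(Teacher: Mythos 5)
Your proposal is correct and follows essentially the same route as the paper: the first identity is exactly the paper's relation $(L_{\widehat{X_{P}}} \hat{I}_{\alpha}) \circ \tilde{\pi}_{\ast}=\tilde{\pi}_{\ast} \circ (L_{X_{P}} \tilde{I}_{\alpha})$, merely spelled out on projectable vector fields, and the second identity is derived, as in the paper, from the uniqueness of the Obata connection once the flow of $\widehat{X_{P}}$ is known to preserve $(\hat{I}_{1},\hat{I}_{2},\hat{I}_{3})$. Your parenthetical alternative via the Ricci tensor is also consistent with the paper's remark after Lemma~\ref{derivative_Obata}, but it is not needed.
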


\begin{proof}
The first claim follows from
Lemma \ref{invariance_by_X1}, as 
$(L_{\widehat{X_{P}}} \hat{I}_{\alpha}) \circ \tilde{\pi}_{\ast}
=\tilde{\pi}_{\ast} \circ (L_{X_{P}} \tilde{I}_{\alpha}$). 
Since the Obata connection is uniquely determined by the hypercomplex structure, 
we have $L_{\widehat{X_{P}}} \hat{\nabla}^{0}=0$ by the invariance 
of the hypercomplex structure $(\hat{I}_{1},\hat{I}_{2},\hat{I}_{3})$ 
under $\widehat{X_{P}}$. 
\end{proof}

The next two lemmas follow respectively 
from $[e_a^R,X_P]=0$ and $L_{X_{P}} \tilde{\theta}_{0}=0$ by projection.

\begin{lemma}\label{invariance_by_X3}
We have $L_{V} \widehat{X_{P}}=0$ and $L_{\hat{I}_{\alpha}V} \widehat{X_{P}}=0$. 
\end{lemma}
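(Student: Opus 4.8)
The plan is to deduce both identities from the standard fact that the $\tilde{\pi}$-projection of a Lie bracket of projectable vector fields is the bracket of the projections. Recall that $V=\tilde{\pi}_{\ast}e_{0}^{R}$ and $\widehat{X_{P}}=\tilde{\pi}_{\ast}X_{P}$ are well-defined precisely because $e_{0}^{R}$ and $X_{P}$ are $V_{1}$-invariant (the latter by Lemma~\ref{invariance_by_X1}). The first step is therefore to exhibit $\hat{I}_{\alpha}V$ as a projection as well: using the intertwining relation $\tilde{\pi}_{\ast}\circ\tilde{I}_{\alpha}=\hat{I}_{\alpha}\circ\tilde{\pi}_{\ast}$ together with $\tilde{I}_{\alpha}e_{0}^{R}=e_{\alpha}^{R}$, one gets $\hat{I}_{\alpha}V=\hat{I}_{\alpha}\tilde{\pi}_{\ast}e_{0}^{R}=\tilde{\pi}_{\ast}\tilde{I}_{\alpha}e_{0}^{R}=\tilde{\pi}_{\ast}e_{\alpha}^{R}$. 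Thus all four fields $V,\hat{I}_{1}V,\hat{I}_{2}V,\hat{I}_{3}V$ are projections of the right-invariant frame $e_{0}^{R},e_{1}^{R},e_{2}^{R},e_{3}^{R}$.

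The second step is to verify that each $e_{a}^{R}$ ($a=0,\dots,3$) is $\tilde{\pi}$-projectable, i.e.\ that $L_{V_{1}}e_{a}^{R}=0$. Writing $V_{1}=e_{1}^{L}-Z_{1}$, the bracket $[e_{1}^{L},e_{a}^{R}]$ vanishes because left- and right-invariant vector fields on $\mathbb{H}^{\ast}$ always commute, while $[Z_{1},e_{a}^{R}]=0$ because on the product $\tilde{M}=\mathbb{H}^{\ast}\times P$ the field $e_{a}^{R}$ is supported on the first factor and $Z_{1}=Z^{h_{\eta}}+(\pi^{\ast}f_{1})X_{P}$ entirely on the second. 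The third and final step is the input flagged in the section hint: since $X_{P}$ is the fundamental vector field of the $\mathrm{U}(1)$-action on $P$ and $e_{a}^{R}$ lives on the $\mathbb{H}^{\ast}$-factor, the two commute, $[e_{a}^{R},X_{P}]=0$. Projecting, I obtain $L_{V}\widehat{X_{P}}=\tilde{\pi}_{\ast}[e_{0}^{R},X_{P}]=0$ and $L_{\hat{I}_{\alpha}V}\widehat{X_{P}}=\tilde{\pi}_{\ast}[e_{\alpha}^{R},X_{P}]=0$, which is the claim.

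The only point that genuinely requires care — the main obstacle — is the bookkeeping of projectability: the projection-of-brackets rule may be invoked only after one knows that every field involved descends to $\hat{M}$, which is why the vanishing of $L_{V_{1}}e_{a}^{R}$ (and of $L_{V_{1}}X_{P}$, already available) must be recorded first. Once this is in place, everything reduces to the observation that on the product $\tilde{M}=\mathbb{H}^{\ast}\times P$ the fields $e_{a}^{R}$ and $X_{P}$ are supported on complementary factors and hence commute.
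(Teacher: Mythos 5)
Your proof is correct and follows essentially the same route as the paper, which disposes of this lemma in one line by noting that it follows from $[e_{a}^{R},X_{P}]=0$ by projection. Your additional bookkeeping — checking $L_{V_{1}}e_{a}^{R}=0$ (so that the $e_{a}^{R}$ are projectable, with $L_{V_{1}}X_{P}=0$ supplied by Lemma~\ref{invariance_by_X1}) and identifying $\hat{I}_{\alpha}V=\tilde{\pi}_{\ast}e_{\alpha}^{R}$ via $\tilde{\pi}_{\ast}\circ\tilde{I}_{\alpha}=\hat{I}_{\alpha}\circ\tilde{\pi}_{\ast}$ — is exactly the justification the paper leaves implicit.
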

\begin{lemma}\label{invariance_by_X4}
We have $L_{\widehat{X_{P}}} \hat{\theta}_{0}=0$ on $\hat{M}$. 
\end{lemma}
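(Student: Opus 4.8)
The plan is to establish the corresponding identity on the total space $\tilde{M}$ and then push it down through the submersion $\tilde{\pi}$. Precisely, I would first verify the auxiliary identity $L_{X_{P}}\tilde{\theta}_{0}=0$ on $\tilde{M}$ (the fact flagged in the sentence preceding the statement), and then transport it to $\hat{M}$ using that $X_{P}$ projects to $\widehat{X_{P}}$ and that $\tilde{\theta}_{0}=\tilde{\pi}^{\ast}\hat{\theta}_{0}$.

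For the auxiliary identity, recall that $\tilde{\theta}_{0}=\varphi_{0}$ is the component along $1$ of the right-invariant Maurer--Cartan form of $\mathbb{H}^{\ast}$, extended to $\tilde{M}=\mathbb{H}^{\ast}\times P$ by $\varphi|_{TP}=0$; equivalently, $\tilde{\theta}_{0}=\mathrm{pr}_{1}^{\ast}\varphi_{0}$ is pulled back from the first factor. The flow of $X_{P}$ is the lift of the principal $\mathrm{U}(1)$-action on $P$; it acts only on the second factor and therefore commutes with $\mathrm{pr}_{1}\colon\tilde{M}\to\mathbb{H}^{\ast}$. Hence it preserves $\mathrm{pr}_{1}^{\ast}\varphi_{0}=\tilde{\theta}_{0}$, and differentiating the flow at $t=0$ yields $L_{X_{P}}\tilde{\theta}_{0}=0$. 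One could equally invoke Cartan's formula, noting that $\iota_{X_{P}}\tilde{\theta}_{0}=\varphi_{0}(X_{P})=0$ since $X_{P}$ is tangent to $P$, while $\iota_{X_{P}}d\tilde{\theta}_{0}=0$ since $d\tilde{\theta}_{0}$ involves only the $\mathbb{H}^{\ast}$-directions.

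To descend, I would use the naturality of the Lie derivative under related vector fields. By Lemma~\ref{invariance_by_X1}, $X_{P}$ is $\tilde{\pi}$-projectable with $\tilde{\pi}_{\ast}X_{P}=\widehat{X_{P}}$, so $(X_{P},\widehat{X_{P}})$ are $\tilde{\pi}$-related; combined with $\tilde{\theta}_{0}=\tilde{\pi}^{\ast}\hat{\theta}_{0}$ this gives
\[
\tilde{\pi}^{\ast}\bigl(L_{\widehat{X_{P}}}\hat{\theta}_{0}\bigr)=L_{X_{P}}\bigl(\tilde{\pi}^{\ast}\hat{\theta}_{0}\bigr)=L_{X_{P}}\tilde{\theta}_{0}=0.
\]
Since $\tilde{\pi}$ is a submersion, the pullback $\tilde{\pi}^{\ast}$ is injective on differential forms, and therefore $L_{\widehat{X_{P}}}\hat{\theta}_{0}=0$, as claimed.

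The argument presents no serious obstacle; the only step requiring any care is the auxiliary identity on $\tilde{M}$, where the essential observation is simply that $\tilde{\theta}_{0}$ lives on the $\mathbb{H}^{\ast}$-factor while $X_{P}$ is tangent to the $P$-factor, so their interaction is trivial. Everything else is the standard compatibility of Lie derivatives with pullback along a submersion by related vector fields, exactly as indicated in the phrase ``by projection.''
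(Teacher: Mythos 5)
Your proof is correct and follows essentially the same route as the paper, which disposes of this lemma in one line (``follows from $L_{X_{P}} \tilde{\theta}_{0}=0$ by projection''): you verify $L_{X_{P}}\tilde{\theta}_{0}=0$ on $\tilde{M}$ using that $\tilde{\theta}_{0}$ is pulled back from the $\mathbb{H}^{\ast}$-factor while $X_{P}$ is tangent to the $P$-factor, and then descend via the $\tilde{\pi}$-relatedness of $X_{P}$ and $\widehat{X_{P}}$ together with injectivity of $\tilde{\pi}^{\ast}$ on forms. Your write-up merely makes explicit the details the paper leaves implicit, including the well-definedness of $\widehat{X_{P}}$ via Lemma~\ref{invariance_by_X1}.
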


Lemma \ref{invariance_by_X3} allows us to define a vector field 
$X := \hat{\pi}_{\ast}\widehat{X_{P}}$ on $\bar{M}$. 

\begin{proposition}\label{affineX}
Let $(\bar{M},\bar{Q})$ be a quaternionic manifold obtained from 
a hypercomplex manifold $M$ satisfying the assumptions in Theorem \ref{gene_hq_corresp}
and $\bar{\nabla}$ the quaternionic connection defined by 
{\rm (\ref{q_conn_barM})}. 
The vector field $X$ is an affine quaternionic vector field of 
$(\bar{M},\bar{Q},\bar{\nabla})$, 
that is, satisfies $L_{X} \Gamma(\bar{Q}) \subset \Gamma(\bar{Q})$ and 
$L_{X} \bar{\nabla}=0$. 
\end{proposition}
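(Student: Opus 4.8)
The plan is to lift everything to the conical hypercomplex manifold $\hat M$ and use the projection $\hat\pi:\hat M\to\bar M$, exploiting the fact that $X$ is, by construction, $\hat\pi_{\ast}\widehat{X_P}$. The strategy mirrors the proof of Theorem~\ref{gene_hq_corresp}: properties of $\bar Q$ and $\bar\nabla$ on $\bar M$ are read off from $\hat\pi$-related properties of $\hat H$ and $\hat\nabla^0$ on $\hat M$, using horizontal lifts with respect to the distribution $\mathcal{\hat H}=\mathrm{Ker}\,\hat\theta'$. Since Lemma~\ref{invariance_by_X3} gives $L_V\widehat{X_P}=0$ and $L_{\hat I_\alpha V}\widehat{X_P}=0$, the vector field $\widehat{X_P}$ is projectable along $\mathcal D$, so $X=\hat\pi_{\ast}\widehat{X_P}$ is well defined and $\widehat{X_P}$ is itself $\hat\pi$-related to $X$.

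First I would establish $L_X\Gamma(\bar Q)\subset\Gamma(\bar Q)$. The key input is Lemma~\ref{invariance_by_X2}, namely $L_{\widehat{X_P}}\hat I_\alpha=0$. Because $\widehat{X_P}$ is $\hat\pi$-related to $X$ and each local $\bar I_\alpha$ is obtained by projecting $\hat I_\alpha$ along horizontal lifts, the relation $(L_X\bar I_\alpha)\circ\hat\pi_{\ast}=\hat\pi_{\ast}\circ(L_{\widehat{X_P}}\hat I_\alpha)$ on projectable fields forces $L_X\bar I_\alpha$ to lie in $\bar Q$; here one must also use Lemma~\ref{invariance_by_X4} ($L_{\widehat{X_P}}\hat\theta_0=0$) to guarantee that $\widehat{X_P}$ preserves the splitting $T\hat M=\mathcal D\oplus\mathcal{\hat H}$, so that horizontal lifts are carried to horizontal lifts and the projection formula is legitimate. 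Since $L_{\widehat{X_P}}\hat I_\alpha=0$ exactly, I expect $L_X\bar I_\alpha=0$ in fact, which in particular gives $L_X\Gamma(\bar Q)\subset\Gamma(\bar Q)$.

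Next I would prove $L_X\bar\nabla=0$. The connection $\bar\nabla$ is defined in (\ref{q_conn_barM}) by projecting the Obata connection $\hat\nabla^0$ along $\mathcal{\hat H}$-horizontal lifts. The essential ingredient is the second claim of Lemma~\ref{invariance_by_X2}, $L_{\widehat{X_P}}\hat\nabla^0=0$. Writing $L_X\bar\nabla$ via the tensorial formula (\ref{affine}) and lifting $Y,W$ to $\mathcal{\hat H}$-horizontal projectable fields, I would compute $(L_X\bar\nabla)_YW$ by projecting $(L_{\widehat{X_P}}\hat\nabla^0)_{Y^h}W^h$. The main care needed is that the Lie bracket operations in (\ref{affine}) interact correctly with horizontal lifting: one checks that $[\widehat{X_P},Y^{h_{\hat\theta'}}]$ projects to $[X,Y]$ and is again horizontal, which follows because $\widehat{X_P}$ preserves $\mathcal{\hat H}$ (from Lemma~\ref{invariance_by_X4}) and is $\hat\pi$-related to $X$. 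Granting this compatibility, $(L_{\widehat{X_P}}\hat\nabla^0)=0$ projects to $L_X\bar\nabla=0$.

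The main obstacle I anticipate is the bookkeeping in this last projection: verifying that the bracket $[\widehat{X_P},Y^{h_{\hat\theta'}}]$ equals the horizontal lift $[X,Y]^{h_{\hat\theta'}}$ up to a term tangent to $\mathcal D$ that is annihilated by $\hat\pi_{\ast}$, and that the same holds for $W$. This is where the invariances $L_{\widehat{X_P}}\hat\theta_0=0$ (Lemma~\ref{invariance_by_X4}) and $L_V\widehat{X_P}=L_{\hat I_\alpha V}\widehat{X_P}=0$ (Lemma~\ref{invariance_by_X3}) must be combined to control the vertical correction terms. Once the horizontal lifts are shown to be exchanged correctly by the flow of $\widehat{X_P}$, both $L_X\bar I_\alpha=0$ and $L_X\bar\nabla=0$ descend directly from the corresponding exact identities on $\hat M$, completing the proof that $X$ is an affine quaternionic vector field.
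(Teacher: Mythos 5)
Your proposal is essentially the paper's own proof: both halves rest on Lemma \ref{invariance_by_X2}, and the second half hinges, exactly as in the paper, on showing that $[\widehat{X_{P}},Y^{h_{\hat{\theta}^{\prime}}}]$ has vanishing vertical part (the paper deduces $p_{v}[\widehat{X_{P}},Y^{h_{\hat{\theta}^{\prime}}}]=0$ from Lemmas \ref{invariance_by_X2} and \ref{invariance_by_X4} together with the closedness of $\hat{\theta}_{0}$ --- the one ingredient you did not name) and then projecting $L_{\widehat{X_{P}}}\hat{\nabla}^{0}=0$ through $\hat{\pi}$ via the tensorial formula (\ref{affine}), with the vertical correction terms killed by $\hat{\pi}_{\ast}$.

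One caveat on the first half: the identity $(L_{X}\bar{I}_{\alpha})\circ\hat{\pi}_{\ast}=\hat{\pi}_{\ast}\circ(L_{\widehat{X_{P}}}\hat{I}_{\alpha})$ cannot hold as stated, because $\hat{I}_{\alpha}$ is not $\hat{\pi}$-related to $\bar{I}_{\alpha}$: by Lemma \ref{derivative_D} applied on $\hat{M}$ one has $L_{\hat{I}_{\beta}V}\hat{I}_{\gamma}\neq 0$, so the individual $\hat{I}_{\alpha}$ do not descend along $\hat{\pi}$; only their span descends, and each $\bar{I}_{\alpha}$ depends on the chosen local section. For the same reason your expectation $L_{X}\bar{I}_{\alpha}=0$ is too strong: the flow of $\widehat{X_{P}}$ covers that of $X$ but drifts off the section by a fiber motion whose $\hat{I}_{\alpha}V$-components rotate the triple $(\hat{I}_{1},\hat{I}_{2},\hat{I}_{3})$ within its span. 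What does follow from $L_{\widehat{X_{P}}}\hat{I}_{\alpha}=0$, combined with the splitting-preservation you invoke via Lemmas \ref{invariance_by_X3} and \ref{invariance_by_X4}, is precisely the conclusion the proposition requires, $L_{X}\Gamma(\bar{Q})\subset\Gamma(\bar{Q})$, which is also all the paper asserts at this step. With that correction your argument coincides with the paper's.
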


\begin{proof}
It follows easily from Lemma \ref{invariance_by_X2} that $X$ preserves the quaternionic structure $\bar{Q}$. From Lemma \ref{invariance_by_X2}, Lemma \ref{invariance_by_X4} and the closure of $\hat{\theta}_0$ we do also obtain that $p_{v}[\widehat{X_{P}},Y^{h_{\hat{\theta}^{\prime}}}]=0$, 
where $p_{h}$ and $p_{v}$ denote the projections from $T \hat{M}$ onto the horizontal and vertical 
subbundles, respectively. Using this, for any vector fields $Y$ and $W$ on $\bar{M}$, 
we compute 
\begin{align*}
(L_{X} \bar{\nabla})_{Y} W 
=&\hat{\pi}_{\ast} 
\left( [ \widehat{X_{P}}, \hat{\nabla}^{0}_{Y^{h_{\hat{\theta}^{\prime}}}} W^{\mathcal{\mathcal{\hat{H}}}}]
    -\hat{\nabla}^{0}_{p_{h}[\widehat{X_{P}},Y^{h_{\hat{\theta}^{\prime}}}]} W^{h_{\hat{\theta}^{\prime}}}
    -\hat{\nabla}^{0}_{Y^{h_{\hat{\theta}^{\prime}}}} p_{h}[\widehat{X_{P}} ,W^{h_{\hat{\theta}^{\prime}}}] \right)\\
=& \hat{\pi}_{\ast} \left( (L_{\widehat{X_{P}}} \hat{\nabla}^{0})_{Y^{h_{\hat{\theta}^{\prime}}}} W^{h_{\hat{\theta}^{\prime}}}
    +\hat{\nabla}^{0}_{p_{v}[\widehat{X_{P}},Y^{h_{\hat{\theta}^{\prime}}}]} W^{h_{\hat{\theta}^{\prime}}}
    +\hat{\nabla}^{0}_{Y^{h_{\hat{\theta}^{\prime}}}} p_{v}[\widehat{X_{P}} ,W^{h_{\hat{\theta}^{\prime}}}] \right)
= 0.\qedhere
\end{align*}
\end{proof}

We call the correspondence from  a hypercomplex manifold
$(M,H,Z,f,\Theta)$ to 
a quaternionic manifold  $(\bar{M},\bar{Q},\bar{\nabla},X)$ 
described in 
Theorem \ref{gene_hq_corresp} (and Proposition \ref{affineX} for the additional structure 
$X$) the 
{\cmssl hypercomplex/quaternionic-correspondence}
({\cmssl H/Q-correspondence} for short). 
As we mentioned in Remarks \ref{rem1} 
and \ref{rem2}, the global assumption in Theorem~\ref{gene_hq_corresp} (H/Q\--cor\-res\-pon\-dence) that $\tilde{\pi}$ and $\hat{\pi}$ are submersions is 
always satisfied locally. 
Under stronger assumptions and by considering Swann's twist \cite{Sw}, 
we have the following global result. 
We use the notation $\zeta_{A}$ for the action induced from the group $\langle A \rangle$ 
generated by a vector field $A$ to distinguish $\mathrm{U}(1)$-actions.

\begin{theorem}[H/Q-correspondence, second version]\label{swan_twist}
Let $M$ be a hypercomplex manifold with a hypercomplex structure 
$H=(I_{1},I_{2},I_{3})$, a closed two-form $\Theta$ and a rotating vector field $Z$ 
such that $L_{Z} \Theta=0$. 
Let $f$ be a smooth function on $M$ such that $df = -\iota_{Z} \Theta$ and assume that 
$f_{1}:=f-(1/2)\Theta(Z,I_{1}Z)$ does nowhere vanish. 
Consider a principal $\mathrm{U}(1)$-bundle $\pi:P \to M$ with a connection form $\eta$ 
whose curvature form is 
\[ d \eta
   =\pi^{\ast} \left( \Theta-
   \frac{1}{2} d ((\iota_{Z} \Theta) \circ I_{1}) \right). \] 
If $Z_{1}=Z^{h_{\eta}}+f_{1}X_{P}$ generates 
a free $\mathrm{U}(1)$-action on $P$, then 
the conification $\hat{M}$ of $M$ is $\mathbb{H}^{\ast} \times_{\langle V_{1} \rangle} P$ and 
the quaternionic manifold $\bar{M}$ coincides 
with the twist of $M$ given by the twist data 
$(\Theta-\frac{1}{2} d ((\iota_{Z} \Theta) \circ I_{1}),Z,f_{1})$ as manifolds. 
\end{theorem}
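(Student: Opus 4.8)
The plan is to reduce both assertions to elementary bookkeeping of the two foliations defining $\hat M$ and $\bar M$, exploiting that once $Z_1$ generates a free $U(1)$-action the relevant leaf spaces become honest quotients by group actions, so that the abstract "space of integral curves" becomes a globally well-behaved bundle.

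First I would treat the identification of $\hat M$. The vector field $e_1^L$ generates the free circle action of right multiplication $z\mapsto z e^{ti}$ on the factor $\mathbb{H}^*$, while by hypothesis $Z_1$ generates a free $U(1)$-action on the factor $P$; since these act on different factors of $\tilde M=\mathbb{H}^*\times P$ they commute, so (after matching the circle periods) $V_1=e_1^L-Z_1$ generates a free $U(1)$-action on $\tilde M$, namely the antidiagonal one $(z,p)\mapsto (z e^{ti},\zeta_{Z_1}(-t)p)$. Its orbits are precisely the integral curves of $V_1$, hence the space of integral curves is the genuine quotient $\tilde M/\langle V_1\rangle$, which is by definition the associated bundle $\mathbb{H}^*\times_{\langle V_1\rangle}P$. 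This proves the first assertion and, in passing, that $\tilde\pi$ is a submersion onto a smooth manifold.

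Next, for $\bar M$, I would describe the composite foliation on $\tilde M$ whose leaf space is $\bar M$. Recall $\mathcal D=\langle V,\hat I_1 V,\hat I_2 V,\hat I_3 V\rangle$ with $V=\tilde\pi_* e_0^R$ and $\hat I_\alpha V=\tilde\pi_*\tilde I_\alpha e_0^R=\tilde\pi_* e_\alpha^R$; since each $e_a^R$ is $\tilde\pi$-projectable (because $[e_1^L,e_a^R]=0$ and $[Z_1,e_a^R]=0$), the $\tilde\pi$-lift of $\mathcal D$ together with $\ker\tilde\pi_*=\langle V_1\rangle$ spans the distribution $\tilde{\mathcal D}:=\langle V_1,e_0^R,e_1^R,e_2^R,e_3^R\rangle$ tangent to the fibres of the composite $\tilde M\to\hat M\to\bar M$. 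The key observation is that the right-invariant frame $(e_0^R,\ldots,e_3^R)$ spans the whole tangent space $T\mathbb{H}^*$ of the first factor and in particular contains $e_1^L$; therefore $V_1=e_1^L-Z_1$ contributes only its $P$-component and $\tilde{\mathcal D}=T\mathbb{H}^*\oplus\langle Z_1\rangle$. Because $T\mathbb{H}^*$ and $\langle Z_1\rangle$ live on the two factors and hence commute, $\tilde{\mathcal D}$ is integrable with product leaves $\mathbb{H}^*\times(\text{$Z_1$-orbit})$, so its leaf space is obtained by collapsing the entire $\mathbb{H}^*$ factor and quotienting $P$ by $\langle Z_1\rangle$, giving $\bar M\cong P/\langle Z_1\rangle$. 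By Lemma \ref{deta:lemma} we have $df_1=-\iota_Z d\eta$ with $d\eta=\Theta-\frac12 d((\iota_Z\Theta)\circ I_1)$, so $P/\langle Z_1\rangle$ is exactly Swann's twist of $M$ with twist data $(\Theta-\frac12 d((\iota_Z\Theta)\circ I_1),Z,f_1)$, which is the second assertion.

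I expect the main obstacle to be the smoothness and globalization issues rather than the (essentially linear-algebraic) identification of the distributions: one must ensure that the assumed freeness of the $Z_1$-action, together with the matching of the circle periods of $e_1^L$ and $Z_1$, really makes $V_1$ generate a single free and proper $U(1)$-action, so that all of the projections $\tilde M\to\hat M\to\bar M$ and $\tilde M\to P/\langle Z_1\rangle$ are submersions of smooth manifolds and the set-theoretic bijection $\bar M\cong P/\langle Z_1\rangle$ upgrades to a diffeomorphism compatible with the submersion hypotheses already in force.
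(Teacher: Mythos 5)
Your proposal is correct and follows essentially the same route as the paper: you realize $\hat{M}$ as the associated bundle $\mathbb{H}^{\ast}\times_{\langle V_{1}\rangle}P$ via the antidiagonal free $\mathrm{U}(1)$-action $(z,p)\mapsto(\zeta_{e_{1}^{L}}(u)z,\zeta_{Z_{1}}(u^{-1})p)$, and identify $\bar{M}$ with the twist $P/\langle Z_{1}\rangle$ using Lemma~\ref{deta:lemma} to verify the twist data, exactly as in the paper's proof. Your only addition is to spell out the distribution computation $\langle V_{1},e_{0}^{R},\dots,e_{3}^{R}\rangle = T\mathbb{H}^{\ast}\oplus\langle Z_{1}\rangle$ behind the step ``the quotient of $\hat{M}$ by $\mathbb{H}^{\ast}$ is $M^{\prime}$,'' which the paper asserts without detail; this is a welcome clarification but not a different argument (and note that the paper's check $L_{Z}d\eta=0$, which you omit, follows anyway from $\iota_{Z}d\eta=-df_{1}$ and $d(d\eta)=0$).
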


\begin{proof}
By Lemma \ref{deta:lemma}, we see $\iota_{Z} d \eta=-df_{1}$. 
It follows that $L_{Z} d\eta=0$ from 
the assumptions $L_{Z} \Theta=0$ and $L_{Z}I_{1}=0$. 
Therefore we obtain a twist $M^{\prime}:=P / \langle Z_{1} \rangle$ of $M$
with the twist data $(\Theta-\frac{1}{2} d ((\iota_{Z} \Theta) \circ I_{1}),Z,f_{1})$
since $Z_{1}=Z^{h_{\eta}}+f_{1}X_{P}$ generates 
a free $\mathrm{U}(1)$-action.  
%
%
%
Let $\pi^{\prime}:P \to M^{\prime}$ be  
the quotient map by the action of $\langle Z_{1} \rangle$. 
We define 
an action of $\langle V_{1} \rangle (\cong \mathrm{U}(1)) \subset \langle e_{1}^{L} \rangle 
\times \langle Z_{1} \rangle $ 
on $\mathbb{H}^{\ast} \times P$ by 
\[ \zeta_{V_{1}}(u)(z,p)=(\zeta_{e_{1}^{L}}(u)z, \zeta_{Z_{1}}(u^{-1})p) \] 
for $(z,p) \in \mathbb{H}^{\ast} \times P$. 
We see that the conification $\hat{M}$ of $M$ is a fiber bundle 
$(\mathbb{H}^{\ast} \times P)/ \langle V_{1} \rangle$ over $M^{\prime}$, which is associated with 
$\pi^{\prime}:P \to M^{\prime}$ and usually denoted by 
$\mathbb{H}^{\ast} \times_{\langle V_{1} \rangle} P$. 
Moreover the quotient of $\hat{M}$ by $\mathbb{H}^{\ast}$ is $M^{\prime}$, that is, $\bar{M}=M^{\prime}$. 
\begin{eqnarray*}
  \begin{diagram}
  \node[3]{\tilde{M}=\mathbb{H}^{\ast} \times P}
  \arrow[1]{s,l}{\pi_{2}}
  \arrow[1]{se,l}{\tilde{\pi}}\\
  \node[3]{P}
  \arrow[1]{sw,l}{\pi} 
  \arrow[1]{se,l}{\pi^{\prime}}
  \node[1]{\hat{M} = \mathbb{H}^{\ast} \times_{\langle V_{1} \rangle} P}
  \arrow[1]{s,l}{\hat{\pi}}\\
  \node[2]{M}
  \arrow[2]{e,tb,..}{\tiny \mbox{twist}}{\tiny \mbox{H/Q-corresp.}}
  \arrow[1]{ene,b,..}{\tiny \mbox{conification}}
  \node[2]{M^{\prime}=\bar{M}}
  \end{diagram}\\
\end{eqnarray*} 
In the above diagram, $\pi_{2}$ is the projection onto the second factor $P$. 
\end{proof}

\begin{remark}\label{sec}
{\rm Note that the bundle $\hat{\pi}: \hat{M}\rightarrow \bar{M}$ is associated to the principal 
$\mathrm{U}(1)$-bundle $P\rightarrow \bar{M}=M'=P/\langle Z_1\rangle$. Therefore sections of $\hat{\pi}$ are in one-to-one correspondence 
with equivariant maps $P\rightarrow \mathbb{H}^{\ast}$.
Let $\lambda:P \to \mathbb{H}^{\ast}$ be such that 
$\lambda(\zeta_{Z_{1}}(u)p)=\zeta_{e_{1}^{L}}(u^{-1})\lambda(p)$
for all $u \in \mathrm{U}(1)$ and $p \in P$ and 
set $F_{\lambda}:=[\lambda, \mathrm{id}]_{\langle V_{1} \rangle} : P \to \hat{M}$. 
If we consider a local section $s:U(\subset \bar{M}=M^{\prime}) \to P$, then 
$s^{\prime}:=F_{\lambda} \circ s : U \to \hat{M}$ 
is a local section of $\hat{\pi}:\hat{M} \to \bar{M}$ 
and the equivariance of $\lambda$ implies that $s'$ is independent of $s$. 
As we observed in the proof of Theorem \ref{gene_hq_corresp}, 
the quaternionic structure $\bar{Q}=\langle \bar{I}_{1}, \bar{I}_{2}, \bar{I}_{3} \rangle$ on $\bar{M}$
is induced from the hypercomplex 
structure on $\hat{M}$ and a local section $s^{\prime}$.   
For $Y \in T_{x} \bar{M}$, we have
\begin{align*}
\bar{I}_{\alpha}(Y)=\hat{\pi}_{\ast}(\hat{I}_{\alpha}Y_{s^{\prime}(x)}^{h_{\hat{\theta}^{\prime}}})
=\hat{\pi}_{\ast}(\hat{I}_{\alpha} s^{\prime}_{\ast} Y), 
\end{align*}
since the decomposition $T \hat{M}={\cal D} \oplus \hat{\cal H}$ is $\hat{I}_{\alpha}$-invariant. 
From 
$s^{\prime}=F_{\lambda} \circ s=[\lambda \circ s, s]_{\langle V_{1} \rangle}
=\tilde{\pi} \circ (\lambda \circ s,s)$, 
it holds that 
\begin{align}
\bar{I}_{\alpha}(Y)&= \hat{\pi}_{\ast}(\hat{I}_{\alpha} s^{\prime}_{\ast} Y) \label{sec_eq1}\\
 &= \hat{\pi}_{\ast} (\hat{I}_{\alpha} (\tilde{\pi}_{\ast} ((\lambda \circ s)_{\ast}(Y)+s_{\ast}Y) ) 
\nonumber \\
 &= \hat{\pi}_{\ast} (\tilde{\pi}_{\ast} ( \tilde{I}_{\alpha} ((\lambda \circ s)_{\ast}(Y)+s_{\ast}Y) ) \nonumber \\
 &= {\pi^{\prime}}_{\ast} ({\pi_{2}}_{\ast} ( \tilde{I}_{\alpha} ((\lambda \circ s)_{\ast}(Y)+s_{\ast}Y) )  \nonumber\\
 &= {\pi^{\prime}}_{\ast} ({\pi_{2}}_{\ast} ( \tilde{I}_{\alpha} s_{\ast}Y) ). \nonumber
\end{align}
Note that $(\lambda \circ s)_{\ast}(Y)+s_{\ast}Y \in T_{(\lambda(s(x)),s(x))}\tilde{M}$. 

Next we consider the decomposition $TP|_{s(U)}=\langle Z_{1} \rangle \oplus s_{\ast}(TU)$. 
Let $p^{\vee}$ be the projection from $TP|_{s(U)}$ onto $s_{\ast}(TU)$. 
Note that $s_{\ast}(T_{x}U)$ is generated by the tangent vectors of the form 
$p^{\vee} (W^{ h_{\eta}}_{s(x)} )=:W^{\vee}$ 
at each point $s(x)$, where $W$ is a tangent vector of $M$ at $\pi(s(x))$
and $\eta$ is the connection form on $P$. 
We define (an almost hypercomplex structure) $I^{\vee}_{\alpha}$ 
on $s(U)$ by 
$I^{\vee}_{\alpha}(W^{\vee})=(I^{\prime}_{\alpha} W)^{\vee}$ 
for each $W^{\vee} \in s_{\ast}(T_{x}U)$, where $I^{\prime}_{\alpha}$ 
is given by (\ref{I_prime}) for $z=\lambda(s(x))$. 
Since
$\tilde{I}_{\alpha}(Z_{1})=\tilde{I}_{\alpha}(e_{1}^{L}) \in T \mathbb{H}^{\ast}$ (by $\tilde{I}_{\alpha}V_{1}=0$), 
we have 
\begin{align}
p^{\vee} (\pi_{2 \ast} (\tilde{I}_{\alpha} (W^{\vee})))
&=p^{\vee} (\pi_{2 \ast} (\tilde{I}_{\alpha} (W^{h_{\eta}}+a Z_{1})))
=p^{\vee} (\tilde{I}_{\alpha} W^{h_{\eta}}) 
=p^{\vee} ((I^{\prime}_{\alpha}W)^{h_\eta}) \label{sec_eq2}\\
&=(I_{\alpha}^{\prime} W)^{\vee} 
=I^{\vee}_{\alpha} (W^{\vee}), \nonumber
\end{align}
where $a \in \mathbb{R}$. 
Then it holds that
\begin{align*}
\bar{I}_{\alpha}(Y)&={\pi^{\prime}}_{\ast} ({\pi_{2}}_{\ast} ( \tilde{I}_{\alpha} s_{\ast}Y) ) 
={\pi^{\prime}}_{\ast} (p^{\vee} ({\pi_{2}}_{\ast} ( \tilde{I}_{\alpha} (s_{\ast}Y)) ) 
={\pi^{\prime}}_{\ast} ( I^{\vee}_{\alpha} (s_{\ast} Y))
\end{align*}
from (\ref{sec_eq1}) and (\ref{sec_eq2}).
Therefore 
$\bar{Q}$ can be identified with $\langle I^{\vee}_{1}, I^{\vee}_{2}, I^{\vee}_{3} \rangle$ on $s(U)$.  
Note that $\langle I^{\vee}_{1}, I^{\vee}_{2}, I^{\vee}_{3} \rangle$ is independent 
of the choice of $\lambda$, and hence 
it is shown again that $\bar{Q}$ is independent of the choice of $\lambda$, which is identified with a section of $\hat{M}$. 
}
\end{remark}

Note that a quaternionic K{\"a}hler metric obtained by the HK/QK-correspondence 
is described directly 
in terms of the objects on $P$ (instead of $\hat{M}$) in \cite{ACDM, MS}.  

\begin{remark}\label{rem2_2}
{\rm 
The conification $\hat{M}$ of $M$ is locally isomorphic to the Swann bundle of $\bar{M}$, which is conical 
as discussed in Example \ref{ex_sw}. 
Note that the Swann bundle is an $\mathbb{H}^{\ast}/\{ \pm 1 \}$-bundle over a quaternionic manifold whereas 
$\bar{M}$ is the quotient of $\hat{M}$ by $\mathbb{H}^{\ast}$ as above. 
Indeed, take an open set $U$ of $\bar{M}$ and local sections $s:U \to \hat{M}$, 
$s^{\prime}:U \to {\cal U}(\bar{M})$, where $\pi^{Sw}:{ \cal U}(\bar{M}) \to \bar{M}$ 
is the Swann bundle of $\bar{M}$. 
For a local trivialization $\Phi:\hat{\pi}^{-1}(U) \to U \times \mathbb{H}^{\ast}$ 
associated to $s$ and given by 
$\Phi(x)=(\hat{\pi}(x),\phi(x))$, we can define a double covering $F:\hat{\pi}^{-1}(U) \to (\pi^{Sw})^{-1}(U) $  
by 
\[ F(x)=\Phi^{\prime -1} ( s^{\prime}(\hat{\pi}(x)) ,  p( \phi(x) )  ). \] 
Here $\Phi^{\prime} : (\pi^{Sw})^{-1}(U) \to U \times \mathbb{H}^{\ast}/\{ \pm 1\}$ 
is a local trivialization associated to $s^{\prime}$ and 
$p:\mathbb{H}^{\ast} \to \mathbb{H}^{\ast}/\{ \pm 1\}$ is the projection. 
See \cite{PPS,BC} for the (twisted) Swann bundle. 
}
\end{remark}

\section{Examples of the H/Q-correspondence}
\label{Examples:sec}
\setcounter{equation}{0}

In this section, we give examples of the H/Q-correspondence.

\begin{example}[HK/QK-correspondence]
{\rm 
Let $(M,g,H=(I_{1},I_{2},I_{3}))$ be a (possibly indefinite) hyper-K{\"a}hler manifold 
with a rotating Killing vector field $Z$ and 
$f$ a nowhere vanishing 
smooth function such that $df=-\iota_{Z} \Theta$, 
where $\Theta$ is the K{\"a}hler form with respect to $g$ and $I_{1}$. 
Set $f_{1}=f-(1/2)g(Z,Z)$ and assume that the functions $g(Z,Z)$ 
and $f_{1}$ are nowhere zero. From these data, we can obtain a (possibly indefinite) quaternionic 
K{\"a}hler manifold 
$(\bar{M},\bar{g})$ \cite{Haydys,ACM,ACDM}. The metric $\bar{g}$ is positive definite under the assumptions specified in \cite[Corollary 2]{ACM} for the signs of the functions $f,f_1$ and for the signature of $g$. Also the sign of the scalar curvature of 
$\bar{M}$ is determined by these choices.}
\end{example}

In the HK/QK-correspondence, the initial data $\Theta$ is a non-degenerate 2-form.
In our more general setting, we may also choose $\Theta=0$, like in the following example. 

\begin{example}[Conical hypercomplex manifold]\label{ex_conical_hyp}
{\rm 
Let $(M,(I_{1},I_{2},I_{3}),V)$ be a conical hypercomplex manifold with the Euler vector field $V$. 
Choose $f_{1}=f=1$, $\Theta=0$, and consider the trivial principal bundle 
$P=M \times \mathrm{U}(1)$ with the connection 
$\eta=dt$, where 
$t$ is the angular coordinate of $\mathrm{U}(1)$ such that $dt(X_{P})=1$ on the fundamental vector field $X_P$. 
We assume that $Z:=I_{1}V$ 
generates a free $\mathrm{U}(1)$-action on $M$
and that 
the periods of $Z$, $X_{P}$ and $e^{L}_{1}$ are the same. 
It holds that $Z$ is rotating  
from Lemma \ref{derivative_D}. 
Then $V_{1}$ generates a free $\mathrm{U}(1)$-action 
on $\tilde{M}=\mathbb{H}^{\ast} \times P= \mathbb{H}^{\ast} \times M \times \mathrm{U}(1)$ 
of the same period. 
Therefore  
\[ \hat{M}(=(\mathbb{H}^{\ast} \times M \times \mathrm{U}(1))/
\langle V_{1} \rangle) \ni [z,p,q]=[zq, \zeta_{Z}(q^{-1})p,1]
\mapsto (zq, \zeta_{Z}(q^{-1})p) \in \mathbb{H}^{\ast} \times M \]
gives 
a diffeomorphism $\hat{M} \cong \mathbb{H}^{\ast} \times M$, 
and hence $\bar{M} \cong M$ as smooth manifolds. 
In fact, we can define a diffeomorphism $\varphi^{\prime}:M \to M^{\prime}(=\bar{M})$ by 
$\varphi^{\prime}(x)=\pi^{\prime}(x,1)$. 
A global section $\bar{M} \to \hat{M}$ 
gives rise to a hypercomplex structure $(\bar{I}_{1},\bar{I}_{2},\bar{I}_{3})$ on $\bar{M}$ 
but the latter does not coincide with
$(I_{1},I_{2},I_{3})$ in general (under the diffeomorphism $\varphi'$). The quaternionic structure 
$\bar{Q}$ on $\bar{M}$, however, coincides with $\langle I_{1}, I_{2}, I_{3} \rangle$.  
Note that $\bar{Q}$
is independent of the section, 
as shown in the proof of Theorem \ref{gene_hq_corresp} 
and Remark~\ref{sec}. 
More explicitly, considering $\lambda_{z}:M \times \mathrm{U}(1) \to \mathbb{H}^{\ast}$ defined 
by $\lambda_{z}(x,u)=z \cdot u^{-1}$ $(z \in \mathbb{H}^{\ast})$ and the section $s:\bar{M} \to P$ 
defined by $s(x)=((\varphi')^{-1}(x),1)$, we see that the section 
$F_{\lambda_{1}} \circ s$ gives the hypercomplex structure 
$(I_1,I_2,I_3)$ and, hence, the 
quaternionic structure 
$\langle I_{1}, I_{2}, I_{3} \rangle$ on $\bar{M}\cong M$.
}
\end{example}

The next example shows that 
our H/Q-correspondence is a proper generalization of the HK/QK-correspondence.

\begin{example}[Hypercomplex Hopf manifold]\label{ex_hopf}
{\rm 
Consider $\mathbb{H}^{n} \cong \mathbb{R}^{4n}$
as a right-vector space over the quaternions 
with the standard hypercomplex structure 
\[ \tilde{H}=(\tilde{I}_{1}=R_i,\tilde{I}_{2}=R_j,\tilde{I}_{3}=\tilde{I}_{1}\tilde{I}_{2}=-R_k)\]  
and the standard flat hyper-K\"ahler metric $\tilde{g}$ 
and set $\tilde{M}=\mathbb{H}^{n}  \backslash \{ 0 \}$. 
Take $A \in \mathrm{Sp}(n) \mathrm{Sp}(1)$ and $\lambda>1$. 
Then $\langle \lambda A \rangle$ is a group of 
homotheties 
which acts freely and properly discontinuously on the 
simply connected manifold $\tilde{M}$. 
The quotient space 
$\tilde{M}/\langle \lambda A \rangle$
inherits a quaternionic structure $Q$ and a quaternionic connection $\nabla$ 
which are invariant under the centralizer $G^{Q}$ of 
$\lambda A$ in $\mathrm{GL}(n,\mathbb{H}) \mathrm{Sp}(1)$. 
In fact, 
the quaternionic structure $\tilde{Q}$ on $\tilde{M}$ is 
${\rm GL}(n,\mathbb H){\rm Sp}(1)$-invariant and 
induces therefore an almost quaternionic structure 
$Q$ on $\tilde{M} / \langle \lambda A \rangle$, 
since $\langle \lambda A \rangle \subset {\rm GL}(n,\mathbb H){\rm Sp}(1)$.  
Moreover, the Levi-Civita connection $\tilde{\nabla}$ 
on $(\tilde{M},\tilde{g})$, 
which coincides with the Obata connection with respect to $\tilde{H}$, 
is invariant under all homotheties of $\tilde{M}$. 
Since $\langle \lambda A \rangle$ acts by homotheties, we see that 
$\tilde{\nabla}$ induces a torsion-free connection $\nabla$ on 
$\tilde{M}/ \langle \lambda A \rangle$, which preserves $Q$. 
This means that $Q$ is a quaternionic
structure on $\tilde{M} / \langle \lambda A \rangle$. 
In particular, if $A \in \mathrm{Sp}(n)$, then the quotient 
$\tilde{M}/\langle  \lambda A \rangle$
inherits an induced 
hypercomplex structure $H=(I_{1},I_{2},I_{3})$ from $\tilde{H}$, 
which is invariant under the centralizer $G^{H}$ of  
$\lambda A$ in $\mathrm{GL}(n,\mathbb{H})$, 
since $\langle  \lambda A \rangle$ preserves $\tilde{H}$. 
We say that $(\tilde{M}/\langle \lambda A \rangle,Q)$ 
(resp. $(\tilde{M}/\langle \lambda A \rangle,H)$) 
is a quaternionic (resp. hypercomplex) Hopf manifold. See \cite{OP,CH}.   

We start with  a hypercomplex Hopf manifold 
$M:=\tilde{M}/\langle  \lambda A  \rangle$, where 
$A \in \mathrm{Sp}(n)$. 
Take $q \in \mathrm{Sp}(1)$ such that $q \neq \pm 1$. 
The centralizer of $q$ in $\mathrm{Sp}(1)$ is isomorphic to $\mathrm{U}(1)$, which is denoted by $\mathrm{U}_{q}(1)$.
We consider a $\mathrm{U}(1)$-action : $z \mapsto z e^{-it}$
on $\tilde{M}$ defined by the right multiplication of 
$\mathrm{U}(1) \cong \mathrm{U}_{q}(1) \subset \mathrm{Sp}(n) \mathrm{U}_{q}(1) 
\subset \mathrm{Sp}(n) \mathrm{Sp}(1)$. 
This action induces one on $M$ and the corresponding vector field $Z$ is rotating. 
Therefore we can apply the same procedure as in Example \ref{ex_conical_hyp} 
under the setting $P=M \times \mathrm{U}_{q}(1)$ (resp. $\tilde{P}=\tilde{M} \times 
\mathrm{U}_{q}(1)$) and $\Theta=0$, and we
have the quaternionic manifold $\bar{M}(=M^{\prime})$ 
(resp. $\bar{\tilde{M}}(=\tilde{M}^{\prime})$)
by the H/Q-correspondence. 
In the following, the quotient map of an action by a group $G$ is denoted by $\pi_{G}$ 
and the objects associated with $\tilde{M}$ are denoted by the corresponding letters for $M$ with 
$\tilde{\,\,\,}$, for example, the projection of the twist from $\tilde{M} \times \mathrm{U}_{q}(1)$ 
is denoted as $\tilde{\pi}^{\prime}$, where we use the notation of Theorem \ref{swan_twist}. 
Let $R_{q}$ be the right multiplication by $q$. 
\begin{eqnarray*}
  \begin{diagram}
  \node[1]{\tilde{M}^{\prime}=\bar{\tilde{M}}=\tilde{M}}
  \arrow[1]{e,b}{/ \langle  \lambda A R_{q}  \rangle}
  \node[1]{M^{\prime}=\bar{M}}\\
  \node[1]{\tilde{P}=\tilde{M} \times \mathrm{U}_{q}(1)}
  \arrow[1]{n,l}{\tilde{\pi}^{\prime}}
  \arrow[1]{e,b}{/ \langle  \lambda A  \rangle}
  \arrow[1]{s,l}{\tilde{\pi}}
  \node[1]{P=M \times \mathrm{U}_{q}(1)}
   \arrow[1]{n,l}{\pi^{\prime}}
  \arrow[1]{s,l}{\pi}\\
  \node[1]{\tilde{M}}
  \arrow[1]{e,b}{/ \langle  \lambda A  \rangle} 
  \node[1]{M}
\end{diagram}\\
\end{eqnarray*}

\noindent
Since $\pi^{\prime} \circ \pi_{\langle \lambda A \rangle}
=\pi_{\langle \lambda A R_{q} \rangle} \circ \tilde{\pi}^{\prime}$ and 
$\tilde{M}^{\prime}=\tilde{M}$ is a manifold with an invariant quaternionic structure under the action 
of $\langle \lambda A R_{q} \rangle$ (Example \ref{ex_conical_hyp} and Proposition \ref{affineX}), 
we have
\[  \bar{M}=M^{\prime}=\tilde{M}/\langle \lambda A R_{q} \rangle. \]
Therefore it holds that 
\[ M=\tilde{M}/\langle  \lambda A  \rangle \; \overset{\text{\tiny H/Q\;\;}}{\longmapsto}\; \bar{M}=\tilde{M}/\langle \lambda A R_{q} \rangle.
\] 

In particular, we can choose $A=E_{n} \in \mathrm{Sp}(n)$. 
Then the centralizer $G^{H}$ of 
$\lambda=\lambda E_{n}$ is $\mathbb{R}^{>0} \times \mathrm{SL}(n,\mathbb{H})$. We take the
subgroup $\mathbb{R}^{>0} \times \mathrm{Sp}(n)$ of $G^{H}$, which acts transitively on $M$. Then 
\[ M=(\mathbb{R}^{>0}/ \langle \lambda \rangle) \times \frac{\mathrm{Sp}(n)}{\mathrm{Sp}(n-1)}. \]
On the other hand, considering the subgroup $\mathbb{R}^{>0} \times \mathrm{Sp}(n) \mathrm{U}_{q}(1)$ 
of the centralizer $G^{Q}$ of $\lambda R_{q}$, we see that
\[ (\mathbb{R}^{>0}/ \langle \lambda \rangle) \times \frac{\mathrm{Sp}(n)}{\mathrm{Sp}(n-1)} \;\overset{\text{\tiny H/Q\;\;}}{\longmapsto}\;
(\mathbb{R}^{>0}/ \langle \lambda \rangle) 
\times \frac{\mathrm{Sp}(n) \mathrm{U}(1)}{\mathrm{Sp}(n-1) \triangle_{\mathrm{U}(1)}}, \]
where $\triangle_{\mathrm{U}(1)}$ is a diagonally embedded subgroup of 
$\mathrm{Sp}(n) \mathrm{U}(1) \subset \mathrm{Sp}(n) \mathrm{Sp}(1)$ which is 
isomorphic to $\mathrm{U}(1)$. 
Considering the case of $n=2$, we have an invariant quaternionic structure 
on the homogeneous space 
\[ \bar{M}=\mathbb{R}^{>0}/ \langle \lambda \rangle 
\times \frac{\mathrm{Sp}(2) \mathrm{U}(1)}{\mathrm{Sp}(1) \triangle_{\mathrm{U}(1)}}
=\frac{T^{2} \cdot \mathrm{Sp}(2)}{\mathrm{U}(2)} \]
by the  H/Q-correspondence. 
Note that $T^{2} \times \mathrm{Sp}(2)$ carries a hypercomplex structure and 
$(T^{2} \times \mathrm{Sp}(2)) / \mathrm{U}(2)$ 
is a homogeneous quaternionic manifold considered in \cite{J2}.

Since $M$ 
is diffeomorphic to $S^{1} \times S^{4n-1}$, $M$ can not admit any 
hyper-K\"ahler structure. Therefore 
the HK/QK-correspondence can not be applied to 
the hypercomplex Hopf manifold $M$. 
The H/Q-correspondence is thus a proper generalization of the HK/QK one. 
}
\end{example}

In the following example, the closed form $\Theta$ is non-zero and degenerate.  

\begin{example}[Lie group with left-invariant hypercomplex structure]\label{ex_su_3}
{\rm
Consider $G=\mathrm{SU}(3)$. 
The Lie algebra $\mathfrak{g}$ of $G$ is decomposed as
$\mathfrak{g}=\mathfrak{g}_{0} + \mathfrak{g}_{1}$, where 
$\mathfrak{g}_{0} =  \mathfrak{s}(\mathfrak{u}(1) \oplus \mathfrak{u}(2)) 
\cong \mathfrak{u}(1) \oplus \mathfrak{su}(2)
\cong \mathbb{H}$ and $\mathfrak{g}_{1}$ is the unique complementary 
$\mathfrak{g}_{0}$-module 
with the action of $\mathbb{H}$ obtained from the 
adjoint action of $\mathfrak{g}_{0}$ \cite{J2}. 
Denote by $V \in \mathfrak{g}_{0}$ the vector which corresponds to $1 \in \mathbb{H}$. 
We use the same letters for left-invariant vector fields and corresponding 
elements of $\mathfrak{g}$ in this example. 
Three complex structures $I_{1}, I_{2}, I_{3}$ on $\mathfrak{g}$ 
can be defined as follows. 
They preserve the decomposition $\mathfrak{g}=\mathfrak{g}_{0} + \mathfrak{g}_{1}$ and act on $\mathfrak{g}_{0}=\mathbb{H}$ by the standard hypercomplex structure $(R_i,R_j,R_iR_j=-R_{k})$. On $\mathfrak{g}_{1}$ they are defined by
\begin{align}\label{ex_su3_hypcpx}
{I_\alpha}|_{\mathfrak{g}_{1}}= -\mathrm{ad}_{I_\alpha V}|_{\mathfrak{g}_{1}},\quad \alpha =1,2,3.
\end{align}
These structures extend to a left-invariant hypercomplex structure on $G$ \cite{J2}, which we denote again by 
$(I_{1}, I_{2}, I_{3})$.

Let $G_{0}\cong (\mathrm{U}(1) \times \mathrm{SU}(2))/\{ \pm \mathrm{1}\}\cong \mathrm{U}(2)$ be the subgroup of $G$ corresponding to $\mathfrak{g}_{0}$. 
Note that $G_0\subset G$ is a hypercomplex submanifold and therefore totally geodesic 
with respect to the Obata connection $\nabla^{G}$ of $G$ \cite{PPS}. 
The Obata connection $\nabla^{G_{0}}$ of $G_{0}$ is given by  
$\nabla^{G_{0}}_{X}Y=XY$ for $X$, $Y \in \mathfrak{g}_{0}=\mathbb{H}$, 
where $XY$ denotes the product of the quaternions $X$ and $Y$. 
Indeed, $\nabla^{G_{0}}$ is torsion-free and $I_{1}$, $I_{2}$, $I_{3}$ are parallel with respect to 
$\nabla^{G_{0}}$. Then it holds 
$\nabla^{G}_{X} V = \nabla^{G_{0}}_{X} V =X$ for $X \in \mathfrak{g}_{0}$. 
For $X \in \mathfrak{g}_{1}$, by $(\ref{ex_su3_hypcpx})$ and the explicit expression of
the Obata connection (see \cite{AM}), 
we also find that $\nabla^{G}_{X} V =X$. Hence the hypercomplex manifold 
$(G,(I_1,I_2,I_3))$
is conical with the Euler vector field $V$ (see also \cite{So}). 

Consider the right-action of  $\mathrm{U}(2)$ on $\mathrm{SU}(3)$ given by 
\[ AB:=A \left( \begin{array}{cc}
       B  &  0  \\
       0  &  \det(B)^{-1}  \\
       \end{array} \right) \]
for $A \in \mathrm{SU}(3)$ and $B \in \mathrm{U}(2)$. 
Let $l:\mathrm{SU}(3) \to \mathrm{SU}(3)/\mathrm{U}(2) \cong \mathbb{C}P^{2}$ 
be the projection and $k:S^{5} \to \mathbb{C}P^{2}$ 
the Hopf fibration. The pullback bundle $P:=l^{\#}S^{5}$ 
of $k:S^{5} \to \mathbb{C}P^{2}$
by $l$ is a $\mathrm{U}(1)$-bundle over $\mathrm{SU}(3)$. 
The usual identification between 
the Stiefel manifold $V_{2}(\mathbb{C}^{3})$ and $\mathrm{SU}(3)$ is given by  
\[ V_{2}(\mathbb{C}^{3}) \ni (a_{1},a_{2}) \leftrightarrow 
A=(a_{1},a_{2}, \bar{a}_{1} \times \bar{a}_{2}) \in \mathrm{SU}(3). \] 
We can write 
\begin{align*}
P &=\{ (A , u) \in \mathrm{SU}(3) \times S^{5} 
\mid l(A)=k(u) \} \\
   &=\{ (A , u) \in \mathrm{SU}(3) \times S^{5} 
\mid \langle c_{3}(A) \rangle 
= \langle u \rangle \in \mathbb{C}P^{2} \} \\
&=\{ (A , \alpha c_{3}(A)) 
   \in \mathrm{SU}(3) \times S^{5} 
\mid \alpha \in \mathrm{U}(1) \} \\
& \cong \mathrm{SU}(3) \times \mathrm{U}(1), 
\end{align*}
where 
$c_{3}(A)$ denotes the third column of $A$. 
This shows that $P$ is a trivial bundle.
Let $l_{\#}:P \to S^{5}$ be the bundle map given by 
$l_{\#}(A,\alpha)=\alpha(\bar{a}_{1} \times \bar{a}_{2})=\alpha \, c_{3}(A)$. 
Consider the pullback connection $l_{\#}^{\ast} \eta$ 
on $P$ from the standard one $\eta$ of $k$ and take 
$\Theta=l^{\ast} \omega$, where $\omega$ is the K{\"a}hler form on 
$\mathbb{C}P^{2}$.
Set $Z:=I_{1} V$. We see that $Z$ generates a $\mathrm{U}(1)$-action on 
$\mathrm{SU}(3)$ 
and is rotating   
by Lemma~\ref{derivative_D}. 
Since 
\[ \langle Z \rangle  \subset \mathrm{SU}(2) \subset 
\mathrm{U}(2), \]
$Z$ is tangent to the fiber of $l$. 
Hence, we have $\iota_{Z} \Theta =0$, $L_{Z} \Theta=0$, and 
also have $d \Theta=0$ by $d \omega=0$. 
So we can choose $f=f_{1}=1$ (see Section~\ref{conif:sec} for the notation) and then see that 
$Z_{1}$ generates a free $\mathrm{U}(1)$-action on $P$
given by 
\[ \zeta_{Z_{1}}(u)(A,  \alpha )
=(\zeta_{Z}(u)(A), u \alpha),\quad u\in \mathrm{U}(1). 
\] 
To see this, it is sufficient to check that $Z$ is horizontal with respect to the 
pull back connection. 
The vector field $Z$ is lifted to $\mathrm{SU}(3) \times \mathrm{U}(1)$ as 
$Z_{(A,\alpha)}=(Z_{A},0) \in T\mathrm{SU}(3) \times T \mathrm{U}(1)$ 
for $A \in \mathrm{SU}(3)$ and $\alpha \in \mathrm{U}(1)$ with the same letter $Z$. 
From $Z \in \mathfrak{su}(2)$, it holds that 
\begin{align*}
 l_{\# \ast} Z_{(A,\alpha)} 
&= \left. \frac{d}{dt} l_{\#} \left( 
\zeta_{Z}(e^{it}) (A ,  \alpha) \right)
\right|_{t=0} \\
&= \left. \frac{d}{dt} l_{\#}( ( \zeta_{Z}(e^{it})(A) , \alpha) )
\right|_{t=0} \\
&= \left. \frac{d}{dt} \alpha \, c_{3}(\zeta_{Z}(e^{it})(A))
\right|_{t=0} \\
&= \left. \frac{d}{dt} \alpha \, c_{3}(A)
\right|_{t=0}=0.
\end{align*} 
In particular, $(l_{\#}^{\ast} \eta)(Z)=0$, that is,  $Z$ is 
horizontal with respect to the pullback connection. 
So we see that $Z_{1}=Z+X_{P}$.   
Therefore, by applying the H/Q-correspondence to $G=\mathrm{SU}(3)$, 
we have a quaternionic manifold 
\[ 
\bar{G}=P/ \langle Z_{1} \rangle 
= \left( \mathrm{SU}(3) \times \mathrm{U}(1)\right) / \mathrm{U}(1) 
\cong \mathrm{SU}(3). 
 \]
The identification is given by  
\[ \left( \mathrm{SU}(3) \times \mathrm{U}(1)\right) / \mathrm{U}(1) 
\ni [(A,\alpha)]_{\langle Z_{1} \rangle}=[ (\zeta_{Z}(\alpha^{-1})A, 1) ]_{\langle Z_{1} \rangle}
\cong  \zeta_{Z}(\alpha^{-1})A \in \mathrm{SU}(3). \]   
Note that there exists no Riemannian metric $g$ on $G$ such that 
$g$ is hyper-K{\"a}hlerian with respect to $(I_{1},I_{2},I_{3})$ since 
$G$ is compact. The situation is summarized in the following diagram.
\begin{eqnarray*}
  \begin{diagram}
  \node[2]{P=l^{\#}S^{5} \cong \mathrm{SU}(3) \times \mathrm{U}(1)}
  \arrow[1]{sw}
  \arrow[2]{s,l,3}{l_{\#}} 
  \arrow[1]{se}\\
  \node[1]{\mathrm{SU}(3)=G}
  \arrow[2]{e,b,3,..}{\tiny \mbox{H/Q-corresp.}}
  \arrow[1]{s,l}{l}
  \node[2]{\bar{G}}\\
  \node[1]{\mathrm{SU}(3) / \mathrm{U}(2) \cong \mathbb{C}P^{2}}
  \node[1]{S^{5}}
  \arrow[1]{w,b}{k} 
\end{diagram}\\
\end{eqnarray*}

{\rm
Note also that $\mathrm{SU}(3) \times \mathrm{U}(1)$
is a three-fold covering of $\mathrm{U}(3)$ : $(A,\alpha) \mapsto \alpha A$. 
The kernel is the cyclic group
$\{ (\zeta \, \mathrm{1},\zeta^{-1})\mid \zeta^3 =1\}$. The principal bundle $P\rightarrow \mathrm{SU}(3)$
induces a principal bundle $\mathrm{U}(3) =P/\mathbb{Z}_3 \rightarrow \mathrm{PSU}(3)=\mathrm{SU}(3)/\mathbb{Z}_3$. 
The actions generated by $Z_{1}$ and $Z$ commutes with that of $\mathbb{Z}_{3}$. 
The vector field $Z$ (resp. $Z_{1}$) on $\mathrm{SU}(3)$ 
(resp.\ $\mathrm{SU}(3) \times \mathrm{U}(1)$)
induces one on $\mathrm{PSU}(3)$ (resp.\ $\mathrm{U}(3)$), 
which is denoted by the same letter $Z$ (resp.\ $Z_{1}$) .  
We obtain the following diagram. 
\begin{eqnarray*}
  \begin{diagram}
  \node[1]{P / \langle Z_{1} \rangle = \bar{G} \cong \mathrm{SU}(3)}
  \arrow[1]{e,b}{/ \mathbb{Z}_{3}}
  \node[1]{\mathrm{U}(3)/\langle Z_{1} \rangle = \bar{G}_{1} \cong \mathrm{SU}(3)/\mathbb{Z}^{3}}\\
  \node[1]{P=\mathrm{SU}(3) \times \mathrm{U}(1)}
  \arrow[1]{n,l}{/ \langle Z_{1} \rangle }
  \arrow[1]{e,b}{/  \mathbb{Z}_{3}}
  \arrow[1]{s,l}{/ \mathrm{U}(1)} 
  \node[1]{P/\mathbb{Z}_{3}=\mathrm{U}(3)}
   \arrow[1]{n,l}{ / \langle Z_{1} \rangle   }
  \arrow[1]{s,l}{/ \mathrm{U}(1)}\\
  \node[1]{G=\mathrm{SU}(3)}
  \arrow[1]{e,b}{/\mathbb{Z}_{3} } 
  \node[1]{G_{1}=\mathrm{PSU}(3)}
\end{diagram}\\
\end{eqnarray*}
We can apply the H/Q-correspondence to 
the Lie group $G_1=\mathrm{PSU}(3)$ with 
the induced left-invariant hypercomplex structure and see that 
its resulting space is $\mathrm{SU}(3)/ \mathbb{Z}^{3}$. 
In fact, since the action of $\langle Z_{1} \rangle$ on $\mathrm{U}(3)$ is given by 
$\zeta_{Z_{1}}(u)(\alpha A)=(u \alpha) (\zeta_{Z}(u)(A))$ and 
its orbit $\{ (u \alpha) (\zeta_{Z}(u)(A)) \mid u \in \langle Z_{1} \rangle \}$ of 
$\alpha A \in \mathrm{U}(3)$
intersects $\mathrm{SU}(3)$ at exactly three points, 
then the resulting space $\mathrm{U}(3)/\langle Z_{1} \rangle$ is $\mathrm{SU}(3)/ \mathbb{Z}^{3}$. 
Consequently, we have $\bar{G}_1 \cong G_1$ again.
}

Next we compare the quaternionic structures on the resulting space(s) 
derived from the pullback connection $\eta_{1}$, 
which is not flat, and the trivial connection $\eta_{0}$ 
as in Example~\ref{ex_conical_hyp}. Recall the notation in Remark \ref{sec}. 
We claim that the two quaternionic structures are different. 
We label the objects obtained from $\eta_{i}$ by the symbol $\eta_i$ or just 
by the letter $i$ ($i=0$, $1$), when no confusion is possible. 
Since $Z^{h_{\eta_{0}}}=Z^{h_{\eta_{1}}}$, $\iota_{Z} \Theta_{0}=\iota_{Z} \, 0 =0$ 
and $\iota_{Z} \Theta_{1}=0$, 
the vector field $Z_{1}$ on $P$ is $Z_{1}=Z+X_{P}$ for both connections $\eta_{0}$ and $\eta_{1}$.  
Then the resulting spaces $\bar{G}^{0}$ and $\bar{G}^{1}$ coincide and we simply write $\bar{G}$ for both. 
%
Let $a$ be the $1$-form on $\bar{G}$ such that $\eta_{1}-\eta_{0}=\pi^{\ast} a$. 
Consider a local section $s:\bar{G} \to P$.  
Since $W^{h_{\eta_{1}}}-W^{h_{\eta_{0}}}=-a(W)X_{P}$ for a tangent vector $W$ at 
$\pi(s(x)) \in \bar{G}$ 
(we omit the reference points of tangent vectors), 
we have
\[ W^{\vee 1}-W^{\vee 0}=-a(W) \mathfrak{X} \]
where $\mathfrak{X}=p^{\vee} (X_{P})$ and we recall that $W^{\vee i} = p^{\vee}(W^{h_{\eta_{i}}})$.  
Therefore we see that 
\begin{align*}
{I}^{\vee 1}_{\alpha} (W^{\vee 1})
&=(I^{\prime}_{\alpha}W)^{\vee 1} \\
&=(I^{\prime}_{\alpha}W)^{\vee 0} -a(I^{\prime}_{\alpha}W) \mathfrak{X}\\
&=I^{\vee 0}_{\alpha} (W^{\vee 0}) -a(I^{\prime}_{\alpha}W) \mathfrak{X}\\ 
&=I^{\vee 0}_{\alpha} (W^{\vee 1})+a(W)I^{\vee 0}_{\alpha} 
\mathfrak{X} -a(I^{\prime}_{\alpha}W)\mathfrak{X}. 
\end{align*}
On the other hand, since 
$W^{\vee 1}=W^{h_{\eta_{1}}}+cZ_{1}=W^{h_{\eta_{1}}}+c(Z^{h_{\eta_{1}}}+X_{P})$, 
we have $\eta_{1}(W^{\vee 1})=c$ and $\pi_{\ast}(W^{\vee 1})=W+cZ$. 
It holds  that 
\[ (\pi^{\ast}a)(W^{\vee 1})=a(W)+ca(Z)=a(W)+a(Z) \eta_{1}(W^{\vee 1}). \]
Hence we have 
\[ {I}^{\vee 1}_{\alpha}=I^{\vee 0}_{\alpha} 
+(\pi^{\ast} a - a(Z) \eta_{1} ) \otimes (I^{\vee 0}_{\alpha} \mathfrak{X})
- ((\pi^{\ast} a - a(Z) \eta_{1} )
\circ I^{\vee1 }_{\alpha}) \otimes \mathfrak{X}. \]
Set $\rho:=\pi^{\ast} a - a(Z) \eta_{1} $ and 
$A:=\rho \otimes  (I^{\vee 0}_{\alpha} \mathfrak{X} ) -
(\rho \circ  I^{\vee1 }_{\alpha}) \otimes \mathfrak{X}$.
If $Q^{\vee 0}(:=\langle I^{\vee 0}_{1}, I^{\vee 0}_{2}, I^{\vee 0}_{3} \rangle)
= Q^{\vee 1}(:=\langle I^{\vee 1}_{1} , I^{\vee 1}_{2}, I^{\vee 1}_{3} \rangle)$, then 
$A^{2}=-|A|^{2} \mathrm{id}$, where $| \, \cdot \, |$ is the norm induced from the metric on $Q^{\vee 0}$ such that 
$I^{\vee 0}_{1}, I^{\vee 0}_{2}, I^{\vee 0}_{3}$ are orthonormal. As the rank of $A$ is at most $2$, 
this is only possible if $A=0$. 
This implies  $\rho = \pi^{\ast} a - a(Z) \eta_{1} =0$, which is equivalent to $a=0$. 
By Remark \ref{sec}, the quaternionic structure $\bar{Q}^{i}$ can be identified with 
$Q^{\vee i}$ $(i=0,1)$. 
Then we see that $\bar{Q}^{0} \neq  \bar{Q}^{1}$ since $\eta_{0} \neq \eta_{1}$. 
This proves the claim. 
}
\end{example}

\section{The tangent bundle of a special complex manifold and a generalization of the rigid c-map}
\setcounter{equation}{0}

In this section, we consider a generalization of the  rigid c-map \cite{CFG,Freed, ACD}. 
The generalization associates 
a hypercomplex manifold $M$, the Obata connection of which is Ricci-flat, 
with a special complex manifold.  
In the case of a 
{\it conical} special complex manifold, we shall show that the hypercomplex manifold carries a canonical 
rotating 
vector field $Z^M$ (Lemma \ref{rotation_eq}), such that we can apply our H/Q correspondence. 
Consequently, we shall construct a quaternionic manifold from 
a conical special complex manifold as the generalized supergravity c-map (Theorem \ref{gene_sup_cmap}). 
We start with defining a class of manifolds generalizing
conical special K\"ahler manifolds \cite{ACD,MS}.

\begin{definition}\label{scm:def}
A {\cmssl special complex manifold} $(N,J,\nabla)$ is a complex manifold $(N,J)$ endowed with a 
torsion-free flat connection $\nabla$ such that the $(1,1)$-tensor field $\nabla J$ is symmetric.
A {\cmssl conical special complex manifold}  $(N,J,\nabla, \xi)$ is a special complex manifold $(N,J,\nabla)$
endowed with a vector field $\xi$ such that 
\begin{itemize}
\item $\nabla \xi=\mathrm{id}$ and 
\item $L_{\xi} J=0$ or, equivalently, $\nabla_\xi J =0$.
\end{itemize}
\end{definition}

The connection $\nabla$ is called the {\cmssl special connection}. To see that $L_{\xi} J=0$ is equivalent to $\nabla_\xi J=0$ it suffices to write 
$L_\xi = \nabla_\xi -\nabla \xi = \nabla_\xi - \mathrm{id}$, using that $\nabla$ is torsion-free and $\nabla\xi = \mathrm{id}$.
We also note that the integrability of $J$ follows from the symmetry of $\nabla J$ since $\nabla$ is torsion-free. 
We set $A:=\nabla J.$

\begin{lemma}\label{sc:lem}
For every conical special complex manifold, we have $L_{J \xi} J=A_{J\xi }=0$.
\end{lemma}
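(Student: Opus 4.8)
The plan is to reduce everything to a few pointwise-algebraic identities for $A=\nabla J$ and then assemble them; notably this will not require the flatness of $\nabla$ at all.

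First I would observe that the hypothesis $\nabla_\xi J=0$ is exactly the statement $A_\xi=0$, where I write $A_X:=\nabla_X J\in\mathrm{End}(TN)$. Because $A$ is symmetric, meaning $A_XY=A_YX$, this instantly upgrades to $A_Y\xi=A_\xi Y=0$ for every vector field $Y$. This is the engine of the argument: symmetry turns the single hypothesis $A_\xi=0$ into the vanishing of $A_{(\cdot)}\xi$.

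Next I would use the universal identity coming from $J^2=-\mathrm{id}$. Differentiating with any connection gives $0=\nabla_X(J^2)=A_XJ+JA_X$, so each $A_X$ anticommutes with $J$. Combining this with the previous step propagates the vanishing from $\xi$ to $J\xi$:
\[ A_Y(J\xi)=(A_YJ)\xi=-(JA_Y)\xi=-J(A_Y\xi)=0. \]
A final use of symmetry reads this as $A_{J\xi}Y=A_Y(J\xi)=0$, i.e. $A_{J\xi}=0$, which is half of the claim.

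For the remaining half I would compute $L_{J\xi}J$ directly. Since $\nabla$ is torsion-free, one has for any $X$ the identity $(L_XJ)Y=A_XY-\nabla_{JY}X+J\nabla_YX$. Specializing to $X=J\xi$ and using $\nabla_W\xi=W$ (hence $\nabla_W(J\xi)=A_W\xi+JW$) together with $J^2=-\mathrm{id}$, the two transport terms reduce to $-A_{JY}\xi+JA_Y\xi$; the pure $Y$-contributions cancel and both surviving terms vanish by the facts already established. Thus $(L_{J\xi}J)Y=A_{J\xi}Y$, and the vanishing of $A_{J\xi}$ gives $L_{J\xi}J=0$. I do not expect a genuine obstacle here; the only delicate point is the bookkeeping in this last step, where one must track the terms $J(JY)=-Y$ and $JW$ carefully so that the $Y$-contributions cancel and only $A$-terms remain. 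The conceptual crux is the pair of observations that symmetry converts $A_\xi=0$ into $A_{(\cdot)}\xi=0$, and that anticommutation with $J$ then carries this vanishing across from $\xi$ to $J\xi$.
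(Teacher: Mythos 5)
Your proof is correct and takes essentially the same route as the paper: the symmetry of $A=\nabla J$ upgrades $A_\xi=0$ to $A_{(\cdot)}\xi=0$, anticommutation of each $A_X$ with $J$ (from differentiating $J^2=-\mathrm{id}$) then gives $A_{J\xi}=0$, and the torsion-free identity together with $\nabla\xi=\mathrm{id}$ reduces $(L_{J\xi}J)Y$ to $A_{J\xi}Y-A_{JY}\xi+JA_Y\xi=0$, which matches the paper's computation term for term. Your observation that flatness of $\nabla$ is never used is accurate, but the paper's proof does not use it either, so this is a remark rather than a genuinely different argument.
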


\begin{proof}
Based on the symmetry of $\nabla J$, we compute 
\[ A_{J\xi} = A(J\xi) = -J(A\xi) = -J A_\xi =0.\]
Using this and the properties listed in Definition \ref{scm:def}, 
we then obtain 
\begin{align*}
(L_{J\xi} J)X
=&- A_{JX} \xi +J A_{X}\xi=0
\end{align*}
for all $X \in \Gamma(TN)$. 
Note that in the last step we have used the symmetry of $A=\nabla J$. 
\end{proof}

Next we consider the tangent bundle $TN=:M$ of a special complex manifold $(N,J,\nabla)$. We can define the $\nabla$-horizontal lift $X^{h_{\nabla}}$
and the vertical lift $X^{v}$
of $X \in \Gamma(TN)$. See \cite{Bl} for example. 
The $C^\infty (M)$-module $\Gamma (TM)$ is generated by vector fields of the form $X^{h_{\nabla}}+Y^{v}$, 
where $X$, $Y \in \Gamma(TN)$.
On $M$, we define a triple of $(1,1)$-tensors 
$(I_{1},I_{2},I_{3})$ by 
\begin{align}
I_{1}(X^{h_{\nabla}}+Y^{v}) &=(JX)^{h_{\nabla}}-(JY)^{v}, \label{I_1}\\
I_{2}(X^{h_{\nabla}}+Y^{v}) &= Y^{h_{\nabla}}-X^{v}, \label{I_2}\\
I_{3}(X^{h_{\nabla}}+Y^{v}) &= (J Y)^{h_{\nabla}}+(JX)^{v} \label{I_3}
\end{align}
for $X^{h_{\nabla}}+Y^{v} \in TM$. 
Note that $(I_{1},I_{2},I_{3})$ is an almost hypercomplex structure. 
In fact, it is easy to see $I_{\alpha}^{2}=-\mathrm{id}$ and 
\begin{align*}
(I_{1} \circ I_{2})(X^{h_{\nabla}}+Y^{v}) &=I_{1}(Y^{h_{\nabla}}-X^{v})=(JY)^{h_{\nabla}}+(JX)^{v}=I_{3}(X^{h_{\nabla}}+Y^{v}),  \\
(I_{2} \circ I_{1})(X^{h_{\nabla}}+Y^{v}) &=I_{2}( (JX)^{h_{\nabla}}-(JY)^{v})=-(JY)^{h_{\nabla}}-(JX)^{v}=
-I_{3}(X^{h_{\nabla}}+Y^{v})
\end{align*}
for $X^{h_{\nabla}}+Y^{v} \in TM$.
Note
that it holds 
\begin{align}\label{bracket}
[X^{h_{\nabla}},Y^{h_{\nabla}}] =[X,Y]^{h_{\nabla}}, \,
[X^{h_{\nabla}},Y^{v}] =(\nabla_{X} Y)^{v}, \, 
[X^{v},Y^{v}] =0
\end{align}
for $X$, $Y \in \Gamma(TN)$.

\begin{lemma}\label{ingrable_special_hyp}
For every special complex manifold $(N,J,\nabla)$, the canonical almost hypercomplex structure 
$(I_{1},I_{2},I_{3})$ on $M=TN$ is integrable, that is, 
$(M,(I_{1},I_{2},I_{3}))$ is a hypercomplex manifold. 
\end{lemma}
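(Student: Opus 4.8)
The plan is to prove integrability by showing that the Nijenhuis tensor of each $I_\alpha$ vanishes, which by the Newlander--Nirenberg theorem is equivalent to integrability. Recall that
\[ N^{I}(U,W)=[U,W]+I[IU,W]+I[U,IW]-[IU,IW]. \]
Since $N^{I_\alpha}$ is tensorial and, as noted just before the lemma, $\Gamma(TM)$ is generated over $C^{\infty}(M)$ by horizontal lifts $X^{h_\nabla}$ and vertical lifts $Y^{v}$, it suffices to evaluate each $N^{I_\alpha}$ on the three types of pairs $(X^{h_\nabla},Y^{h_\nabla})$, $(X^{h_\nabla},Y^{v})$ and $(X^{v},Y^{v})$. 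First I would substitute the definitions (\ref{I_1})--(\ref{I_3}) and rewrite every resulting bracket using the relations (\ref{bracket}).

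Most of the resulting terms vanish for formal reasons. For $I_2$, which merely interchanges horizontal and vertical lifts, every component of $N^{I_2}$ collapses to a multiple of $(\nabla_XY-\nabla_YX-[X,Y])^{h_\nabla}$ or its vertical analogue, which is zero because $\nabla$ is torsion-free (here the flatness encoded in $[X^{h_\nabla},Y^{h_\nabla}]=[X,Y]^{h_\nabla}$ is essential); thus $I_2$ is integrable using no more than that the special connection is flat and torsion-free. For $I_1$ the horizontal-horizontal component is exactly $(N^{J}(X,Y))^{h_\nabla}$, which vanishes since $J$ is integrable (a consequence of the symmetry of $\nabla J$ already recorded in the text), while the vertical-vertical component vanishes trivially because vertical lifts commute.

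The only term carrying genuine content is the mixed one $N^{I_1}(X^{h_\nabla},Y^{v})$. Expanding its brackets by (\ref{bracket}) and inserting $\nabla_X(JY)=A_XY+J\nabla_XY$ with $A=\nabla J$, all terms containing a $\nabla$ cancel and one is left with
\[ N^{I_1}(X^{h_\nabla},Y^{v})=(JA_XY+A_{JX}Y)^{v}. \]
This is where the special complex structure is indispensable, and I expect it to be the heart of the argument. Differentiating $J^2=-\mathrm{id}$ gives the anticommutation $A_XJ=-JA_X$, while the symmetry $A_UW=A_WU$ of $A=\nabla J$ gives $A_{JX}Y=A_Y(JX)=-JA_YX=-JA_XY$; combining the two identities makes the expression vanish. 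The crux is thus a single algebraic cancellation in which both the symmetry of $\nabla J$ and its anticommutation with $J$ must be used simultaneously --- precisely the point where a generic tangent-bundle almost hypercomplex structure would fail to be integrable.

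It then remains to treat $I_3=I_1I_2$. The quickest route is to invoke the standard fact that an almost hypercomplex structure two of whose members are integrable is automatically hypercomplex, so that $N^{I_1}=N^{I_2}=0$ already forces $N^{I_3}=0$. Alternatively one repeats the computation directly: its only nontrivial component is again a mixed term that reduces to an expression of the same shape as above and vanishes by the identical pair of identities. Either way all three Nijenhuis tensors vanish, and hence $(M,(I_1,I_2,I_3))$ is a hypercomplex manifold.
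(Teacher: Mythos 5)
Your proof is correct and follows essentially the same route as the paper's: you compute the Nijenhuis tensors of $I_1$ and $I_2$ on horizontal and vertical lifts via (\ref{bracket}) (using flatness through $[X^{h_\nabla}, Y^{h_\nabla}]=[X,Y]^{h_\nabla}$, torsion-freeness for $I_2$, and the integrability of $J$ plus the symmetry of $\nabla J$ for $I_1$), and then deduce the integrability of $I_3$ from the standard two-out-of-three fact, which the paper cites as \cite[Theorem 3.2]{AM}. The only difference is one of detail: the paper dismisses the computations as "easily calculated," whereas you make explicit the one genuinely non-formal step, the reduction of the mixed term to $(JA_XY+A_{JX}Y)^v$ and its cancellation via $A_{JX}Y=A_Y(JX)=-JA_XY$, which is indeed correct.
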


\begin{proof}
Thanks to (\ref{bracket}), the Nijenhuis tensors of $I_{1}$ and $I_{2}$ can be 
easily calculated and we find the following. Using that $J$ is integrable, $\nabla$ is flat and $\nabla J$ is symmetric, we see that $I_{1}$ is integrable. 
Because $\nabla$ is flat and torsion-free, $I_{2}$ is integrable. 
The integrability of $I_{3}$ follows from that of 
$I_{1}$ and $I_{2}$ \cite[Theorem 3.2]{AM}.
\end{proof}

We define a connection $\nabla^{\prime}$ by 
\[ \nabla^{\prime}:=\nabla -\frac{1}{2} J(\nabla J)=\nabla -\frac{1}{2} J A.
\]
Then we see that $\nabla^{\prime}J=0$ and $\nabla^{\prime}$ is torsion-free for every special
complex manifold.  
Moreover, when the special complex manifold is conical, it holds that $\nabla^{\prime} \xi=\nabla \xi=\mathrm{id}$. 

\begin{lemma}\label{curvature_p}
For every special complex manifold $(N,J,\nabla)$, we have
\begin{align*}
R^{\nabla^{\prime}}_{X,Y}=-\frac{1}{4}[A_{X},A_{Y}] 
\end{align*}
for $X$, $Y \in TN$.
\end{lemma}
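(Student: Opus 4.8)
The goal is to compute the curvature of the connection $\nabla' = \nabla - \tfrac{1}{2}JA$ where $A = \nabla J$. The plan is to expand the curvature tensor $R^{\nabla'}_{X,Y} = [\nabla'_X, \nabla'_Y] - \nabla'_{[X,Y]}$ directly using the definition $\nabla' = \nabla + S$ with $S_X := -\tfrac{1}{2}J A_X$. The general formula for the curvature of a connection perturbed by a $(1,1)$-valued one-form $S$ reads
\[
R^{\nabla'}_{X,Y} = R^{\nabla}_{X,Y} + (\nabla_X S)_Y - (\nabla_Y S)_X + [S_X, S_Y],
\]
where the terms $(\nabla_X S)_Y$ account for the covariant derivative of $S$ and $[S_X, S_Y]$ is the commutator of endomorphisms. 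Since $\nabla$ is flat, $R^{\nabla} = 0$, so the entire computation reduces to showing that the symmetric-derivative part $(\nabla_X S)_Y - (\nabla_Y S)_X$ vanishes and that the commutator $[S_X, S_Y]$ produces exactly $-\tfrac{1}{4}[A_X, A_Y]$.

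First I would handle the commutator term. With $S_X = -\tfrac{1}{2}JA_X$, we have
\[
[S_X, S_Y] = \tfrac{1}{4}[JA_X, JA_Y].
\]
Here the key algebraic inputs are that $J^2 = -\mathrm{id}$ and that $A_X = (\nabla_X J)$ anticommutes with $J$ (since differentiating $J^2 = -\mathrm{id}$ gives $A_X J + J A_X = 0$, i.e. $JA_X = -A_X J$). Using this anticommutation relation to move the $J$'s past the $A$'s, the product $JA_X \cdot JA_Y$ should collapse to $-A_X A_Y$ (up to sign), so that $\tfrac{1}{4}[JA_X, JA_Y] = -\tfrac{1}{4}[A_X, A_Y]$, which is precisely the claimed answer. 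This is a short, purely algebraic manipulation.

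The main work, and the expected obstacle, is proving that the first-order part vanishes:
\[
(\nabla_X S)_Y - (\nabla_Y S)_X = -\tfrac{1}{2}\bigl( (\nabla_X (JA))_Y - (\nabla_Y(JA))_X \bigr) = 0.
\]
Expanding $\nabla_X(JA)$ via the Leibniz rule gives terms involving $(\nabla_X J)A_Y = A_X A_Y$ and $J(\nabla_X A)_Y$. The term $A_X A_Y - A_Y A_X = [A_X, A_Y]$ is generally nonzero, so for cancellation to occur it must be absorbed by the $J(\nabla A)$ terms. The crucial facts here are the symmetry of $A = \nabla J$ (so $A_X Y = A_Y X$) together with the second Bianchi/flatness identity for $\nabla$: since $\nabla$ is flat, the Ricci identity forces $(\nabla_X A)_Y$ to be symmetric in all its arguments in an appropriate sense, i.e. $\nabla A$ is totally symmetric. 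I would carefully track which symmetrizations of $(\nabla_X A)_Y - (\nabla_Y A)_X$ arise and show, using flatness of $\nabla$ to commute covariant derivatives of $J$ and the symmetry of $A$, that the $J(\nabla A)$ contribution exactly cancels the $[A_X, A_Y]$ coming from the $(\nabla J)A$ term, leaving zero. The delicate point is correctly bookkeeping the signs and the placement of $J$ relative to the anticommutation relation throughout these derivative terms; that is where I expect the real effort to lie.
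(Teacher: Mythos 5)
Your overall skeleton is the same as the paper's: write $\nabla'=\nabla+S$ with $S_X=-\tfrac{1}{2}JA_X$, use the perturbation formula $R^{\nabla'}_{X,Y}=R^{\nabla}_{X,Y}+(\nabla_XS)_Y-(\nabla_YS)_X+[S_X,S_Y]$, and invoke flatness of $\nabla$. But both of your intermediate claims are wrong, and the second would make your declared ``main work'' fail. For the commutator term, compute $JA_X\,JA_Y=J(A_XJ)A_Y=-J^2A_XA_Y=A_XA_Y$: the anticommutation $A_XJ=-JA_X$ and $J^2=-\mathrm{id}$ each contribute one sign, and the two signs cancel. Hence $[S_X,S_Y]=+\tfrac{1}{4}[A_X,A_Y]$, not $-\tfrac{1}{4}[A_X,A_Y]$; your hedge ``up to sign'' conceals exactly this error.

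For the first-order part, it does \emph{not} vanish. Expanding by the Leibniz rule,
\[
(\nabla_XS)_Y-(\nabla_YS)_X
=-\tfrac{1}{2}\bigl(A_XA_Y-A_YA_X\bigr)
-\tfrac{1}{2}J\bigl((\nabla_XA)_Y-(\nabla_YA)_X\bigr),
\]
and since $(\nabla_XA)_Y=\nabla^2_{X,Y}J$, the Ricci identity for the torsion-free connection $\nabla$ gives $(\nabla_XA)_Y-(\nabla_YA)_X=R^{\nabla}_{X,Y}\cdot J$, which is killed outright by flatness --- this is precisely the term the paper records as $-\tfrac{1}{2}J(R^{\nabla}_{X,Y}J)$ before setting $R^{\nabla}=0$. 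So the cancellation mechanism you planned (the $J(\nabla A)$ terms absorbing $[A_X,A_Y]$) does not exist: there is nothing left to cancel $[A_X,A_Y]$, and the derivative part equals $-\tfrac{1}{2}[A_X,A_Y]$. The correct bookkeeping is $-\tfrac{1}{2}[A_X,A_Y]+\tfrac{1}{4}[A_X,A_Y]=-\tfrac{1}{4}[A_X,A_Y]$, exactly as in the paper's proof. Your two sign errors happen to offset in the final total, but as written the argument is broken: carried out honestly, your attempt to prove the first-order part is zero would fail. Note also that neither the symmetry of $A$ nor any delicate Bianchi-type symmetrization is needed here --- only $A_XJ=-JA_X$, the Leibniz rule, and flatness of $\nabla$.
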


\begin{proof}
Set $S=-(1/2) J(\nabla J)$. 
Since $\nabla$ is flat, we see that
 the curvature $R^{\nabla^{\prime}}$ of  $\nabla^{\prime}$ is given by
\[ R^{\nabla^{\prime}}_{X,Y}=(\nabla_{X}S)_{Y}-(\nabla_{Y}S)_{X}
+[S_{X},S_{Y}] \]
for $X$, $Y \in TN$. By 
\begin{align*}
(\nabla_{X}S)_{Y}-(\nabla_{Y}S)_{X}
&=-\frac{1}{2} [A_{X},A_{Y}]
   -\frac{1}{2} J(R^{\nabla}_{X,Y}J), \\
[S_{X},S_{Y}]
&=\frac{1}{4} [A_{X},A_{Y}], 
\end{align*}
we have the conclusion. 
\end{proof}

Hence a special complex manifold admits the complex connection 
$\nabla^{\prime}$ such that $R^{\nabla^{\prime}}$ is of type $(1,1)$. 
In fact, it follows from $A_{JX}=-J A_{X}$ for all $X \in TN$. 
The following theorem is a generalization of the rigid c-map in the absence of a metric.

\begin{theorem}[Generalized rigid c-map]\label{ricci_flat} 
The tangent bundle of any special complex manifold $(N,J,\nabla)$ 
carries a canonical hypercomplex structure, defined by (\ref{I_1})-(\ref{I_3}), and 
the Obata connection of the hypercomplex manifold $(M=TN, (I_{1},I_{2},I_{3}))$ 
is Ricci flat.
\end{theorem}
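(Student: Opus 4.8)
Since the hypercomplex structure $(I_{1},I_{2},I_{3})$ and its integrability are already furnished by Lemma~\ref{ingrable_special_hyp}, the only remaining point is that the Obata connection $\nabla^{0}$ of $(M=TN,(I_{1},I_{2},I_{3}))$ is Ricci flat. The plan is to first write $\nabla^{0}$ down explicitly in terms of the auxiliary connection $\nabla^{\prime}=\nabla-\frac{1}{2}JA$ (which satisfies $\nabla^{\prime}J=0$ and is torsion-free) and the symmetric tensor $S:=\nabla^{\prime}-\nabla=-\frac{1}{2}JA$, and then to compute its Ricci tensor using the brackets (\ref{bracket}) and the curvature formula of Lemma~\ref{curvature_p}.

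First I would check that the connection defined on lifted generators by
\[
\nabla^{0}_{X^{h_{\nabla}}}Y^{h_{\nabla}}=(\nabla^{\prime}_{X}Y)^{h_{\nabla}},\quad \nabla^{0}_{X^{h_{\nabla}}}Y^{v}=(\nabla^{\prime}_{X}Y)^{v},\quad \nabla^{0}_{X^{v}}Y^{h_{\nabla}}=(S_{X}Y)^{v},\quad \nabla^{0}_{X^{v}}Y^{v}=-(S_{X}Y)^{h_{\nabla}}
\]
is torsion-free and parallelizes each $I_{\alpha}$. Torsion-freeness follows from (\ref{bracket}), the torsion-freeness of $\nabla^{\prime}$, and the symmetry $S_{X}Y=S_{Y}X$ (a consequence of the symmetry of $A=\nabla J$); compatibility with $I_{1},I_{2},I_{3}$ follows from $\nabla^{\prime}J=0$ together with the identities $A_{X}J=-JA_{X}$ and $A_{JX}=-JA_{X}$. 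By the uniqueness of the Obata connection, this connection equals $\nabla^{0}$.

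Next I would compute the curvature. From the formula above and (\ref{bracket}) one finds that $R^{\nabla^{0}}(X^{h_{\nabla}},Y^{h_{\nabla}})$ acts on both horizontal and vertical lifts as the lift of $R^{\nabla^{\prime}}(X,Y)=-\frac{1}{4}[A_{X},A_{Y}]$ (Lemma~\ref{curvature_p}), e.g.\ $R^{\nabla^{0}}(X^{h_{\nabla}},Y^{h_{\nabla}})Z^{h_{\nabla}}=(R^{\nabla^{\prime}}(X,Y)Z)^{h_{\nabla}}$, while the mixed curvatures $R^{\nabla^{0}}(X^{v},Y^{h_{\nabla}})Z^{h_{\nabla}}=-((\nabla^{\prime}_{Y}S)_{X}Z+S_{S_{Y}X}Z)^{v}$ and $R^{\nabla^{0}}(X^{v},Y^{h_{\nabla}})Z^{v}=((\nabla^{\prime}_{Y}S)_{X}Z+S_{S_{Y}X}Z)^{h_{\nabla}}$ involve $\nabla^{\prime}S$ and $S^{2}$. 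Forming $\mathrm{Ric}^{\nabla^{0}}$ by tracing over a frame adapted to the splitting $TM=H\oplus V$, I would observe that the mixed blocks $\mathrm{Ric}^{\nabla^{0}}(Y^{h_{\nabla}},Z^{v})$ vanish for type reasons (the contributing curvature lies in the wrong summand, so pairs to zero with the dual coframe), so only the horizontal-horizontal block is substantial: its horizontal trace gives $\mathrm{Ric}^{\nabla^{\prime}}(Y,Z)$ and its vertical trace gives the $S^{2}$-contribution.

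The crux, and the main obstacle, is to show that these two contributions cancel. The key inputs are the trace identities $\mathrm{tr}(A_{W})=\mathrm{tr}(\nabla_{W}J)=W(\mathrm{tr}\,J)=0$ and $\mathrm{tr}(JA_{W})=\frac{1}{2}W(\mathrm{tr}(J^{2}))=0$, together with the symmetry of $A$ and of $\nabla^{\prime}A$ in their lower arguments. Using Lemma~\ref{curvature_p} and these identities one computes, on the one hand, $\mathrm{Ric}^{\nabla^{\prime}}(Y,Z)=\frac{1}{4}\mathrm{tr}(A_{Y}A_{Z})$ (the ``mixed'' trace $\mathrm{tr}(A_{A_{Y}Z})$ dropping out by $\mathrm{tr}(A_{\cdot})=0$), and on the other hand that the $\nabla^{\prime}S$-trace vanishes (by $\mathrm{tr}(JA_{\cdot})=0$) while the $S^{2}$-trace equals $-\frac{1}{4}\mathrm{tr}(A_{Y}A_{Z})$; hence $\mathrm{Ric}^{\nabla^{0}}(Y^{h_{\nabla}},Z^{h_{\nabla}})=0$. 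Finally I would invoke the hermitian property of the Obata Ricci tensor (as used in Lemma~\ref{derivative_Obata}) to get $\mathrm{Ric}^{\nabla^{0}}(Y^{v},Z^{v})=\mathrm{Ric}^{\nabla^{0}}(Y^{h_{\nabla}},Z^{h_{\nabla}})=0$, completing the proof. The delicate step throughout is the bookkeeping of the divergence-type trace of $\nabla^{\prime}S$ and verifying it vanishes; once the trace identities for $A$ are in place, the cancellation $\frac{1}{4}-\frac{1}{4}=0$ is the entire content of Ricci-flatness.
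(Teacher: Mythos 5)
Your proposal is correct, and its skeleton coincides with the paper's proof of Theorem~\ref{ricci_flat}: both write the Obata connection explicitly on horizontal and vertical lifts (your formulas with $S=-\frac{1}{2}JA$ are exactly the paper's, since $S_{U}X=-\frac{1}{2}JA_{X}U$ by the symmetry of $A$), verify torsion-freeness and parallelism of the $I_{\alpha}$ to invoke uniqueness, compute the curvature from (\ref{bracket}) and Lemma~\ref{curvature_p}, and observe that the mixed Ricci blocks vanish for type reasons. You genuinely deviate in two places, both sound. First, the paper expands the mixed curvature $R_{U^{v},X^{h_{\nabla}}}Y^{h_{\nabla}}$ into the Hessian term $\frac{1}{2}(J(H^{\nabla}_{X,Y}J)U)^{v}$ plus $A$-quadratic terms, obtains $Ric(X^{h_{\nabla}},Y^{h_{\nabla}})=\frac{1}{2}\mathrm{Tr}\,J(H^{\nabla}_{X,Y}J)+\frac{1}{2}\mathrm{Tr}\,A_{X}A_{Y}$, and kills this with the identity $\mathrm{Tr}\,J(H^{\nabla}_{X,Y}J)+\mathrm{Tr}\,A_{X}A_{Y}=0$ derived by differentiating $(\nabla J)J=-J(\nabla J)$. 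You instead keep the curvature packaged as $-((\nabla^{\prime}_{Y}S)_{U}Z+S_{S_{Y}U}Z)^{v}$ and note that $\mathrm{Tr}\,S_{W}=-\frac{1}{2}\mathrm{Tr}(JA_{W})=0$ identically, so the divergence-type trace of $\nabla^{\prime}S$ vanishes and everything reduces to $Ric^{\nabla^{\prime}}(Y,Z)-\frac{1}{4}\mathrm{Tr}(A_{Y}A_{Z})=\frac{1}{4}\mathrm{Tr}(A_{Y}A_{Z})-\frac{1}{4}\mathrm{Tr}(A_{Y}A_{Z})=0$; if one expands your vanishing trace in terms of $H^{\nabla}J$ and $A$ one recovers precisely the paper's identity, so the two mechanisms are equivalent, but yours makes the source of the cancellation (trace-freeness of $S$ in every slot) more transparent and never needs the total symmetry of the Hessian. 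Second, the paper computes $Ric(U^{v},V^{v})$ directly from the vertical-vertical curvature, whereas you deduce it from $Y^{v}=-I_{2}Y^{h_{\nabla}}$ together with the hermitian property $Ric(I_{2}\,\cdot\,,I_{2}\,\cdot\,)=Ric(\,\cdot\,,\,\cdot\,)$ of the Obata Ricci tensor; this is legitimate within the paper's own toolkit (the same property is invoked in the proof of Lemma~\ref{derivative_Obata}), though it imports an external fact of hypercomplex geometry where the paper's argument stays self-contained. What your route buys is a shorter trace computation; what the paper's buys is the explicit curvature table, which it reuses later (e.g.\ in Example~\ref{ex_conic} to distinguish quaternionic structures via $W^{q,l}=R^{\tilde{\nabla}^{0,l}}$), so the longer computation is not wasted there.
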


\begin{proof} The integrability of the canonical almost hypercomplex structure 
defined by (\ref{I_1})-(\ref{I_3}) was proven in Lemma~\ref{ingrable_special_hyp}.
Let $\tilde{\nabla}^{0}$ be its  Obata connection.  Using the explicit expression of 
the Obata connection, we have 
\begin{align*}
&{\tilde{\nabla}^{0}}_{X^{h_{\nabla}}}Y^{h_{\nabla}}
=(\nabla^{\prime} _{X}Y )^{h_{\nabla}}, \,\,\,
{\tilde{\nabla}^{0}}_{U^{v}} X^{h_{\nabla}} 
= -\frac{1}{2}(JA_{X} U)^{v} 
= -\frac{1}{2}(JA_{U}X)^{v}\\
&{\tilde{\nabla}^{0}}_{X^{h_{\nabla}}} U^{v} =(\nabla^{\prime}_{X} U)^{v}, \,\,\, 
{\tilde{\nabla}^{0}}_{U^{v}} V^{v} 
= \frac{1}{2}(J A_{V} U)^{h_{\nabla}} 
= \frac{1}{2}(J A_{U} V)^{h_{\nabla}}
\end{align*}
for $X$, $Y$, $U$, $V \in \Gamma(TN)$. 
It can be also checked directly, using by (\ref{I_1})-(\ref{bracket}), that the above formulas for 
$\tilde{\nabla}^{0}$ 
on horizontal and vertical lifts extend uniquely to a torsion-free connection $\tilde{\nabla}^{0}$ for which $I_{1}$, $I_{2}$, $I_{3}$ are parallel. 
We see that the bundle projection from $(TN, \tilde{\nabla}^{0})$ onto $(N,\nabla^{\prime})$ is an 
affine submersion \cite{AH}. 
Again, a straightforward calculation (or 
the fundamental equations of an affine submersion) gives 
\begin{align*}
R^{{\tilde{\nabla}^{0}}}_{U^{v},V^{v}}W^{v}
=&{-\frac{1}{4}(A_{U}A_{V}W)^{v}+\frac{1}{4}(A_{V}A_{U}W)^{v}}
={(R^{\nabla^{\prime}}_{U,V}W)^{v}}, \\ 
R^{{\tilde{\nabla}^{0}}}_{U^{v},V^{v}} X^{h_{\nabla}}
=&{-\frac{1}{4}(A_{U}A_{V}X)^{h_{\nabla}}
+\frac{1}{4}(A_{V}A_{U}X)^{h_{\nabla}}} 
={(R^{\nabla^{\prime}}_{U,V}X)^{h_{\nabla}}},\\ 
R^{{\tilde{\nabla}^{0}}}_{U^{v}, X^{h_{\nabla}}} V^{v}
=&{-\frac{1}{2}(J(H^{\nabla}_{U,V}J)X)^{h_{\nabla}}
-\frac{1}{4}(A_{X}A_{U}V)^{h_{\nabla}} -\frac{1}{4}(A_{U}A_{X} V)^{h_{\nabla}}}, \\
R^{{\tilde{\nabla}^{0}}}_{U^{v},X^{h_{\nabla}}}Y^{h_{\nabla}}
=& {\frac{1}{2} (J(H^{\nabla}_{X,Y} J)U)^{v}
   +\frac{1}{4} (A_{X}A_{Y}U)^{v} 
  +\frac{1}{4} (A_{U}A_{X} Y)^{v}}, \\
R^{{\tilde{\nabla}^{0}}}_{X^{h_{\nabla}},Y^{h_{\nabla}}}U^{v}
=&(R^{\nabla^{\prime}}_{X,Y}U)^{v}, \\
R^{{\tilde{\nabla}^{0}}}_{X^{h_{\nabla}},Y^{h_{\nabla}}}Z^{h_{\nabla}}
=&(R^{\nabla^{\prime}}_{X,Y} Z)^{h_{\nabla}} 
\end{align*}
for $X$, $Y$, $Z$, $U$, $V$, $W \in TN$, where $H^{\nabla}$ is the Hessian (the second covariant derivative) with respect to 
$\nabla$ and we have used Lemma~\ref{curvature_p}. Note that $(H^{\nabla}_{X,Y}J)(Z)=(H^{\nabla}_{X,Z}J)(Y)$ for 
all $X$, $Y$, $Z \in TN$, since $\nabla J$ is symmetric. 
Hence the flatness of $\nabla$ means that the Hessian of $J$ 
with respect to $\nabla$ is totally symmetric.    
By these equations, the Ricci tensor of ${\tilde{\nabla}^{0}}$ satisfies
\begin{align*}
Ric^{{\tilde{\nabla}^{0}}} (X^{h_{\nabla}},Y^{h_{\nabla}})
&=\frac{1}{2} \mathrm{Tr} J(H^{\nabla}_{X,Y} J)+\frac{1}{2} \mathrm{Tr} A_{X}A_{Y}, \\
Ric^{{\tilde{\nabla}^{0}}} (X^{h_{\nabla}},U^{v})
&=Ric^{{\tilde{\nabla}^{0}}} (U^{v},X^{h_{\nabla}})=0, \\
Ric^{{\tilde{\nabla}^{0}}} (U^{v},V^{v})
&=\frac{1}{2} \mathrm{Tr} J(H^{\nabla}_{U,V} J)
+\frac{1}{2} \mathrm{Tr} A_{U}A_{V}
\end{align*}
for $X$, $Y$, $U$, $V \in TN$. 
From $(\nabla J)J=-J(\nabla J)$, it holds that 
\[ \mathrm{Tr} J(H^{\nabla}_{X,Y} J)+\mathrm{Tr} A_{X}A_{Y}=0 \]
for all $X$, $Y \in TN$. 
Therefore the Obata connection of the manifolds obtained from our hypercomplex version of the c-map is Ricci flat.  
\end{proof}

\begin{remark}
{\rm 
The horizontal distribution on $M$ is integrable by (\ref{bracket}) and 
each leaf is totally geodesic with respect to the Obata connection $\tilde{\nabla}^{0}$, since  
${\tilde{\nabla}^{0}}_{X^{h_{\nabla}}}Y^{h_{\nabla}}
=(\nabla^{\prime} _{X}Y )^{h_{\nabla}}$ for $X$, $Y \in \Gamma(TN)$ . 
}
\end{remark}

\begin{remark}
{\rm 
In \cite[Theorem A]{F2}, a hypercomplex  structure was obtained on a neighborhood of the zero section of the tangent bundle of a complex manifold with a complex connection whose curvature is of type $(1,1)$. 
By contrast, our generalized rigid c-map gives a hypercomplex structure on the whole tangent bundle 
when the manifold is special complex.  
}
\end{remark}


\section{The c-projective structure on a projective special complex manifold}
\label{c-proj:sec}
\setcounter{equation}{0}

In this section, 
we discuss projective special complex manifolds and obtain the c-projective Weyl curvature 
of a canonically induced c-projective structure. 
Let $(N,J,\nabla,\xi)$ be a conical special complex manifold. 
Since $L_{\xi}J=0$ and $L_{J \xi}J=0$, 
we obtain a complex structure $\bar{J}$ 
on the quotient $\bar{N}:=N/\langle \xi, J\xi \rangle$ 
if $\bar{N}$ is a smooth manifold.

\begin{lemma}\label{der_conn_prime}
We have $L_{\xi} \nabla^{\prime}=0$ and $L_{J\xi} \nabla^{\prime}=0$. 
\end{lemma}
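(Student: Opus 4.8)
The plan is to prove $L_\xi \nabla' = 0$ and $L_{J\xi}\nabla' = 0$ by working directly from the definition $\nabla' = \nabla - \frac{1}{2}JA$ with $A = \nabla J$, combined with the defining properties of a conical special complex manifold. Since the Lie derivative of a connection along a vector field $W$ is the tensor $(L_W \nabla')_Y Z = L_W(\nabla'_Y Z) - \nabla'_{L_W Y} Z - \nabla'_Y(L_W Z)$ defined in (\ref{affine}), and since $L_W$ acts as a derivation, it suffices to control how $W$ interacts with each piece $\nabla$ and $JA$ separately.

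First I would treat $W = \xi$. The key inputs are $\nabla \xi = \mathrm{id}$, $\nabla_\xi J = 0$ (equivalently $L_\xi J = 0$), and torsion-freeness, which together give $L_\xi = \nabla_\xi - \mathrm{id}$ as an operator on tensors, exactly as in the proof of Lemma~\ref{derivative_D}. From $L_\xi J = 0$ one differentiates to obtain $L_\xi(\nabla J) = (L_\xi \nabla)J$; but $L_\xi \nabla = 0$ because $\nabla$ is flat and torsion-free while $\nabla \xi = \mathrm{id}$ (a homothety-type vector field for a flat connection leaves it invariant). Hence $L_\xi A = 0$ and therefore $L_\xi(JA) = (L_\xi J)A + J(L_\xi A) = 0$, giving $L_\xi \nabla' = L_\xi \nabla - \frac{1}{2}L_\xi(JA) = 0$. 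I expect the $\xi$ case to be essentially formal once $L_\xi \nabla = 0$ is established.

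The case $W = J\xi$ is the substantive one. Here I would use Lemma~\ref{sc:lem}, which supplies $L_{J\xi}J = 0$ and $A_{J\xi} = 0$. Writing $L_{J\xi} = \nabla_{J\xi} - \nabla(J\xi)$ and noting $\nabla(J\xi) = \nabla J\cdot \xi + J\nabla\xi = A_{\,\cdot\,}\xi + J$ (using $\nabla\xi = \mathrm{id}$), the operator $L_{J\xi}$ is no longer a pure homothety perturbation, so one cannot simply invoke flatness. Instead I would compute $L_{J\xi}\nabla$ as a tensor directly: since $\nabla$ is flat, $L_{J\xi}\nabla$ equals (up to sign) the second covariant derivative of $J\xi$, i.e.\ $(L_{J\xi}\nabla)_Y Z = -H^\nabla_{Y,Z}(J\xi) = -(\nabla^2 J\xi)(Y,Z)$, and one must show this combines with $-\frac{1}{2}L_{J\xi}(JA)$ to cancel. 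The term $L_{J\xi}(JA)$ expands via $L_{J\xi}J = 0$ into $J\,L_{J\xi}A$, and $L_{J\xi}A = L_{J\xi}(\nabla J)$ is again expressible through the Hessian of $J$ and the symmetry $(H^\nabla_{X,Y}J)(Z) = (H^\nabla_{X,Z}J)(Y)$ noted in the proof of Theorem~\ref{ricci_flat}.

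The main obstacle, therefore, is the bookkeeping in the $J\xi$ computation: one must carefully expand the Hessian terms $\nabla^2(J\xi)$ using $\nabla\xi=\mathrm{id}$, $\nabla J = A$, and the total symmetry of $H^\nabla J$, then verify that the flat-connection contribution $-H^\nabla_{\cdot,\cdot}(J\xi)$ is exactly cancelled by $\frac{1}{2}J\,H^\nabla_{\cdot,\cdot}J$ evaluated against $\xi$, invoking $A_{J\xi} = 0$ and $A_{J\cdot} = -JA_\cdot$ along the way. I anticipate this is where the real content lies, and it parallels the structure of Lemma~\ref{derivative_Obata}, where one reduced an invariance-of-connection statement to a Ricci-type vanishing; here the analogous simplification comes from flatness of $\nabla$ and the symmetry of $\nabla J$ rather than from a Ricci identity. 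An alternative, cleaner route I would consider is to use that $\nabla'$ is the unique torsion-free complex connection with $\nabla'\xi = \mathrm{id}$ and $R^{\nabla'}$ of type $(1,1)$ (Lemma~\ref{curvature_p}): if $J\xi$ can be shown to be an automorphism of $(\,N,J\,)$ preserving these characterizing data, invariance of $\nabla'$ would follow from uniqueness, mirroring the remark after Lemma~\ref{derivative_Obata}.
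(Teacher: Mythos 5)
Your proposal is correct in substance but takes a genuinely different route from the paper's. The paper works entirely with $\nabla'$ itself: using that $\nabla'$ is torsion-free, that $\nabla'\xi=\mathrm{id}$ and that $\nabla'J=0$ (so $\nabla'_W(J\xi)=JW$), it rewrites $(L_{\xi}\nabla')_XY$ and $(L_{J\xi}\nabla')_XY$ in three lines as the curvature terms $R^{\nabla'}_{\xi,X}Y$ and $R^{\nabla'}_{J\xi,X}Y$, which vanish by Lemma~\ref{curvature_p} ($R^{\nabla'}=-\frac{1}{4}[A,A]$) combined with $A_{\xi}=A_{J\xi}=0$ from Lemma~\ref{sc:lem}. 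You instead split $\nabla'=\nabla-\frac{1}{2}JA$ and differentiate the two pieces separately: for $\xi$ both pieces are separately invariant, as you correctly show, while for $J\xi$ neither is, and you defer the cancellation to ``bookkeeping.'' That bookkeeping does close, and in exactly the way you predict: with the convention (\ref{affine}) and flatness one gets $(L_{J\xi}\nabla)_XY=H^{\nabla}_{X,Y}(J\xi)$ (note the sign is $+$, not $-$, though you hedged this), and expanding via $\nabla\xi=\mathrm{id}$ and differentiating the identity $A_{\,\cdot\,}\xi=0$ gives $(H^{\nabla}_{X,Y}J)\xi=-A_XY$, hence $L_{J\xi}\nabla=A$; likewise $L_{J\xi}J=0$ yields $L_{J\xi}(JA)=J\,L_{J\xi}A=2A$, so $\frac{1}{2}L_{J\xi}(JA)=A$ cancels the first term. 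These identities are precisely the paper's later Lemma~\ref{a1_der}, stated there without proof, so your route establishes that lemma as a by-product, at the cost of a longer Hessian computation; the paper's route buys brevity by letting the curvature formula for $\nabla'$ absorb all of this bookkeeping. One caution: your suggested ``cleaner'' alternative via a uniqueness characterization of $\nabla'$ (torsion-free, complex, $\nabla'\xi=\mathrm{id}$, $R^{\nabla'}$ of type $(1,1)$) is nowhere established in the paper and would itself require proof, so it should be regarded as speculative rather than as a substitute for the computation.
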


\begin{proof}
By Lemmas \ref{curvature_p} and \ref{sc:lem}, 
we have 
\begin{align*}
(L_{\xi} \nabla^{\prime})_{X}Y 
&=[\xi,  \nabla^{\prime}_{X}Y]
-\nabla^{\prime}_{[\xi,X]}Y-\nabla^{\prime}_{X}[\xi,Y] \\
&=\nabla^{\prime}_{\xi} \nabla^{\prime}_{X}Y - \nabla^{\prime}_{\nabla^{\prime}_{X}Y} \xi
-\nabla^{\prime}_{[\xi,X]}Y-\nabla^{\prime}_{X} \nabla^{\prime}_{\xi}Y
+\nabla^{\prime}_{X} \nabla^{\prime}_{Y} \xi \\
&=R^{\nabla^{\prime}}_{\xi,X}Y=0
\end{align*}
and 
\begin{align*}
(L_{J\xi} \nabla^{\prime})_{X}Y 
&=[J\xi,  \nabla^{\prime}_{X}Y]
-\nabla^{\prime}_{[J\xi,X]}Y-\nabla^{\prime}_{X}[J\xi,Y] \\
&=\nabla^{\prime}_{J\xi} \nabla^{\prime}_{X}Y - \nabla^{\prime}_{\nabla^{\prime}_{X}Y} J\xi
-\nabla^{\prime}_{[J\xi,X]}Y-\nabla^{\prime}_{X} \nabla^{\prime}_{J\xi}Y
+\nabla^{\prime}_{X} \nabla^{\prime}_{Y} J\xi \\
&=R^{\nabla^{\prime}}_{J\xi,X}Y=0
\end{align*}
for all $X$, $Y \in \Gamma(TN)$. 
\end{proof}

Recall \cite{Ishi} that a smooth curve $t\mapsto c(t)$ on a complex manifold $(M,J)$ 
is called {\cmssl $J$-planar with
respect to a connection $\nabla$ if $\nabla_{c'}c' \in \mathrm{span}\{ c', Jc'\}$.} 
We say that torsion-free complex connections $\nabla^{1}$ and $\nabla^{2}$ on a 
complex manifold $(M,J)$ are {\cmssl c-projectively related  \cite{CEMN}}   
if they have the same $J$-planar curves. It is known that 
$\nabla^{1}$ and $\nabla^{2}$ are c-projectively related
if and only if there exists a one-form $\theta$ on $M$ such that
\begin{align*}
\nabla^{1}_{X}Y
=&\nabla^{2}_{X} Y
+\theta(X)Y+\theta(Y)X
-\theta(JX)JY-\theta(JY)JX
\end{align*}
for $X$, $Y \in \Gamma(TM)$. See \cite{Ishi} for example. 
This defines an equivalence relation on the space of torsion-free complex connections on $M$.
The equivalence classes are called {\cmssl c-projective structures}.

\begin{definition}
We call the complex manifold $(\bar{N},\bar{J})$ a 
{\cmssl projective special complex manifold} if 
$p_{N}:(N,J,\nabla,\xi) \to (\bar{N},\bar{J})$ is a principal $\mathbb{C}^{\ast}$-bundle, where  
the principal $\mathbb{C}^{\ast}$-action is generated by the holomorphic vector field 
$\xi-\sqrt{-1} J \xi$.
\end{definition}

Note that a projective special K{\"a}hler manifold is   
a K\"ahler quotient of a conical special K{\"a}hler manifold. 
Similarly, a projective special complex manifold 
carries an induced c-projective structure as follows.

\begin{proposition}\label{projective_str}
Any projective special complex manifold $(\bar{N},\bar{J})$ carries a canonical c-projective structure.
\end{proposition}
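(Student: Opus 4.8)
The plan is to descend the torsion-free complex connection $\nabla^{\prime}=\nabla-\frac12 JA$ from $N$ to $\bar N$ along local holomorphic sections of the principal $\mathbb C^{\ast}$-bundle $p_N\colon N\to\bar N$. Recall that $\nabla^{\prime}$ is torsion-free, satisfies $\nabla^{\prime}J=0$ and, since the special complex manifold is conical, $\nabla^{\prime}\xi=\mathrm{id}$, whence also $\nabla^{\prime}_X(J\xi)=J\nabla^{\prime}_X\xi=JX$. The fundamental vector fields of the $\mathbb C^{\ast}$-action are $\xi$ (the $\mathbb R^{>0}$-direction) and $J\xi$ (the circle direction); they commute because $[\xi,J\xi]=\nabla^{\prime}_\xi(J\xi)-\nabla^{\prime}_{J\xi}\xi=J\xi-J\xi=0$. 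By Lemma~\ref{der_conn_prime} we have $L_\xi\nabla^{\prime}=L_{J\xi}\nabla^{\prime}=0$, so $\nabla^{\prime}$ is invariant under the full $\mathbb C^{\ast}$-action. Finally, $p_{N\ast}\circ J=\bar J\circ p_{N\ast}$ since $p_N$ is holomorphic for $\bar J$.

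First I would fix a local holomorphic section $s\colon U\to N$ of $p_N$ (these exist because $p_N$ is a holomorphic principal $\mathbb C^{\ast}$-bundle) and define a connection on $U\subset\bar N$ by $\bar\nabla^{s}_X Y:=p_{N\ast}\bigl(\nabla^{\prime}_{s_\ast X}(s_\ast Y)\bigr)$, where $s_\ast X,s_\ast Y$ denote the images of $X,Y$ under the embedding $s$. Since $s_\ast X$ is tangent to the submanifold $s(U)$, the right-hand side is independent of how $s_\ast Y$ is extended off $s(U)$. The relation $p_N\circ s=\mathrm{id}$ gives the Leibniz rule (a derivative of a function landing on $s_\ast X$ reproduces the $X$-derivative on $U$), so $\bar\nabla^{s}$ is a connection; choosing extensions of $s_\ast X,s_\ast Y$ tangent to $s(U)$ and using that $\nabla^{\prime}$ is torsion-free shows $\bar\nabla^{s}$ is torsion-free. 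The key point is that $\bar\nabla^{s}$ is \emph{complex}: because $s$ is holomorphic we have $s_\ast(\bar J Y)=J\,s_\ast Y$, so using $\nabla^{\prime}J=0$ and $p_{N\ast}J=\bar Jp_{N\ast}$ one obtains $\bar\nabla^{s}_X(\bar J Y)=p_{N\ast}(\nabla^{\prime}_{s_\ast X}(J s_\ast Y))=\bar J\,\bar\nabla^{s}_X Y$, i.e. $\bar\nabla^{s}\bar J=0$. This step fails for a non-holomorphic section, which is why holomorphicity of $s$ is essential.

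It remains to show that two holomorphic sections yield c-projectively related connections, so that the class is canonical. Writing $s_2=g\cdot s_1$ with $g\colon U\to\mathbb C^{\ast}$ holomorphic and $g=e^{u+iv}$, differentiation of the action gives $s_{2\ast}X=(\Phi_{g})_\ast s_{1\ast}X+\mu(X)\,\xi+\nu(X)\,J\xi$ at $s_2(x)$, where $\mu=du$, $\nu=dv$ and the last two terms form the fundamental vector field of $Xw\in\mathbb C$. I would then expand $\nabla^{\prime}_{s_{2\ast}X}(s_{2\ast}Y)$ and project by $p_{N\ast}$. By $\mathbb C^{\ast}$-invariance of $\nabla^{\prime}$ the leading term reproduces $\bar\nabla^{s_1}_X Y$; every term in which the covariant derivative lands on a fibre direction is vertical (using $\nabla^{\prime}_\xi\xi=\xi$, $\nabla^{\prime}_\xi(J\xi)=J\xi$, etc.) and dies under $p_{N\ast}$; and the remaining terms, evaluated with $\nabla^{\prime}\xi=\mathrm{id}$ and $\nabla^{\prime}_\bullet(J\xi)=J\bullet$, contribute $\mu(Y)X+\mu(X)Y+\nu(Y)\bar J X+\nu(X)\bar J Y$. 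The Cauchy--Riemann equations for the holomorphic function $w=u+iv$ give $\nu=-\mu\circ\bar J$, so with $\theta:=\mu=d\,\mathrm{Re}(\log g)$ the difference becomes exactly
\[
\bar\nabla^{s_2}_XY-\bar\nabla^{s_1}_XY=\theta(X)Y+\theta(Y)X-\theta(\bar J X)\bar J Y-\theta(\bar J Y)\bar J X,
\]
which is the c-projective change of connection. Hence all the $\bar\nabla^{s}$ lie in a single c-projective class, independent of the section and of all auxiliary choices, giving the canonical c-projective structure on $(\bar N,\bar J)$.

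The hard part will be the bookkeeping in the last expansion, specifically the terms in which $\nabla^{\prime}$ differentiates the pushed-forward field $(\Phi_g)_\ast s_{1\ast}Y$ in the directions $\xi$ and $J\xi$: there one must check that $p_{N\ast}[\xi,(\Phi_g)_\ast s_{1\ast}Y]=p_{N\ast}[J\xi,(\Phi_g)_\ast s_{1\ast}Y]=0$, which I would handle by replacing $(\Phi_g)_\ast s_{1\ast}Y$ by a $p_N$-projectable extension (legitimate since $\nabla^{\prime}_{\xi}$ and $\nabla^{\prime}_{J\xi}$ depend on the field only along the fibres, where projectability is preserved by the $\mathbb C^{\ast}$-flow) and then using that $[\xi,\,\cdot\,]$ and $[J\xi,\,\cdot\,]$ project to $[0,Y]=0$. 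Keeping track of the precise sign convention for the fundamental vector field of $i\in\mathbb C$ together with the matching Cauchy--Riemann sign is the only other delicate point, but these signs are forced to be consistent because $g$ is holomorphic.
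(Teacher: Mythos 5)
Your proposal is correct, and it is a section-based variant of the paper's argument rather than a different theorem-level idea: both proofs project the same connection $\nabla^{\prime}=\nabla-\frac12 JA$ to $\bar{N}$ along a $J$-invariant horizontal complement of $\langle \xi, J\xi\rangle$ and then show that changing the auxiliary choice produces exactly a c-projective change, using the same conical identities $\nabla^{\prime}\xi=\mathrm{id}$, $\nabla^{\prime}_{\bullet}(J\xi)=J\bullet$, $\nabla^{\prime}J=0$ and the invariance $L_{\xi}\nabla^{\prime}=L_{J\xi}\nabla^{\prime}=0$ of Lemma~\ref{der_conn_prime}. The difference is in the auxiliary datum: the paper takes a globally defined principal connection form $\hat{\alpha}=\alpha-\sqrt{-1}(\alpha\circ J)$ of type $(1,0)$ and compares two such forms, where writing $\hat{\beta}-\hat{\alpha}=p_{N}^{\ast}\theta_{0}+\sqrt{-1}\,p_{N}^{\ast}\theta_{1}$ the type-$(1,0)$ condition forces $\theta_{1}=-\theta_{0}\circ\bar{J}$, which is precisely what makes the difference of projected connections a c-projective change; in your version the datum is a local holomorphic section $s$ (equivalently, the flat type-$(1,0)$ connection over $U$ whose horizontal spaces are tangent to the $\mathbb{C}^{\ast}$-translates of $s(U)$), and the role of the type-$(1,0)$ condition is played by the Cauchy--Riemann relation $\nu=-\mu\circ\bar{J}$ for the holomorphic transition function $g$, yielding the same comparison formula with $\theta=d\log|g|$. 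What each buys: your route makes the one-form $\theta$ very concrete ($d\log|g|$) and the comparison computation is self-contained, but it needs two inputs the paper avoids. First, local holomorphic sections and holomorphicity of $g$ -- these do exist here, since the $\mathbb{C}^{\ast}$-action is generated by the holomorphic vector field $\xi-\sqrt{-1}J\xi$ so that $p_{N}$ is a holomorphic submersion, whereas the paper's type-$(1,0)$ connection forms exist by mere averaging (any $\mathbb{C}^{\ast}$-invariant real one-form $\alpha$ with $\alpha(\xi)=1$ works), with no holomorphicity input. Second, your construction only produces connections over the sets of a cover, pairwise c-projectively related on overlaps; to literally obtain a c-projective structure on $\bar{N}$ as an equivalence class of globally defined torsion-free complex connections you should add the routine patching step (glue with a partition of unity $\rho_{i}$ and observe that $\sum_{i}\rho_{i}\bar{\nabla}^{s_{i}}$ is torsion-free, complex, and differs from each $\bar{\nabla}^{s_{i}}$ by a c-projective change, since such changes are linear in $\theta$), whereas the paper's global connection form yields a global representative at once. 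With those two remarks added, your bookkeeping in the expansion -- invariant (projectable) extension of $(\Phi_{g})_{\ast}s_{1\ast}Y$, vanishing of the brackets with $\xi$ and $J\xi$ after projection, and the vertical terms dying under $p_{N\ast}$ -- is sound and reproduces the paper's formula exactly.
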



\begin{proof}
Consider a connection form 
$\hat{\alpha}=\alpha-\sqrt{-1} (\alpha \circ J)$ of 
type $(1,0)$ on the principal\linebreak[3] $\mathbb{C}^{\ast}$-bundle
$p_{N}:N \to \bar{N}$.  (Note that any $\mathbb{C}^*$-invariant real one-form $\alpha$ such that $\alpha (\xi) =1$ is the real part of such a connection.) 
We have $TN=\mathrm{Ker} \, \hat{\alpha} \oplus \langle \xi, J \xi \rangle$, where 
$\mathrm{Ker} \, \hat{\alpha}$ is $J$-invariant.  
We denote the $\hat{\alpha}$-horizontal lift of $X \in \Gamma(T \bar{N})$ 
by $X^{h_{\alpha}}$. 
By Lemma \ref{der_conn_prime}, we can define  
$\bar{\nabla}^{\prime \alpha}$ by 
\begin{align}\label{ind_conn}
\bar{\nabla}^{\prime \alpha}_{X} Y
=p_{N \ast} (\nabla^{\prime}_{X^{h_{\alpha}}} Y^{h_{\alpha}})
\end{align}
for $X$, $Y \in \Gamma(T \bar{N})$. 
We claim that $\bar{\nabla}^{\prime \alpha} \bar{J}=0$. 
In fact, using that $JY^{h_{\alpha}}=(\bar{J} Y)^{h_{\alpha}}$  for $Y\in T\bar{N}$ we have 
\begin{align*}
\bar{\nabla}^{\prime \alpha}_{X} (\bar{J}Y)
=p_{N \ast} (\nabla^{\prime}_{X^{h_{\alpha}}} J Y^{h_{\alpha}})
=p_{N \ast} J(\nabla^{\prime}_{X^{h_{\alpha}}} Y^{h_{\alpha}})
=\bar{J} p_{N \ast} (\nabla^{\prime}_{X^{h_{\alpha}}} Y^{h_{\alpha}}). 
\end{align*}
To show that the c-projective structure $[\bar{\nabla}^{\prime \alpha}]$ does not 
depend on $\alpha$, we consider another 
connection form $\hat{\beta}=\beta-\sqrt{-1} (\beta \circ J)$ of 
type $(1,0)$. 
Then there exist one-forms $\theta_{0}$ and $\theta_{1}$ on $\bar{N}$ 
such that 
\begin{align*}\label{diff_conn}
\hat{\beta}-\hat{\alpha}=(p_{N}^{\ast} \theta_{0}) 
+(p_{N}^{\ast} \theta_{1}) \sqrt{-1}.
\end{align*}
On the other hand, 
we can write $X^{h_{\alpha}} - X^{h_{\beta}}=a \xi +b J \xi$ for some 
functions $a$, $b$ on $N$. It is easy to see that
\[ a=\theta_{0}(X)\circ p_{N}, \,  
b=-\theta_{0}(\bar JX){\circ p_{N}}, \, \theta_{1}=-\theta_{0} \circ \bar J \]
for $X \in T\bar N$. 
By the definition (\ref{ind_conn}) of the induced connection on $\bar{N}$, 
we have
\begin{align*}
\bar{\nabla}^{\prime \alpha}_{X} Y
=&p_{N \ast} (\nabla^{\prime}_{X^{h_{\alpha}}} Y^{h_{\alpha}}) \\
=&p_{N \ast} (\nabla^{\prime}_{X^{h_{\beta}} +\theta_{0}(X)\xi-\theta_{0}(\bar JX) J\xi} 
(Y^{h_{\beta}} +\theta_{0}(Y)\xi-\theta_{0}(\bar JY) J \xi)) \\
=&p_{N \ast} ( \nabla^{\prime}_{X^{h_{\beta}}} Y^{h_{\beta}} 
    +\nabla^{\prime}_{X^{h_{\beta}}} \theta_{0}(Y)\xi - \nabla^{\prime}_{X^{h_{\beta}}} \theta_{0}
(\bar JY) J\xi \\
  &\hspace{1cm} +\theta_{0}(X) ( \nabla^{\prime}_{\xi}Y^{h_{\beta}} 
                                   +\nabla^{\prime}_{\xi} \theta_{0}(Y)\xi- \nabla^{\prime}_{\xi}\theta_{0}
(\bar JY)J \xi ) \\
  &\hspace{1cm} -\theta_{0}(\bar JX) ( \nabla^{\prime}_{J \xi}Y^{h_{\beta}}
                                   +\nabla^{\prime}_{J \xi}\theta_{0}(Y)\xi-\nabla^{\prime}_{J \xi}\theta_{0}
(\bar JY) J \xi) \\
=&\bar{\nabla}^{\prime \beta}_{X} Y
+\theta_{0}(Y)X+\theta_{0}(X)Y
-\theta_{0}(\bar{J}Y)\bar{J}X-\theta_{0}(\bar{J}X)\bar{J}Y
\end{align*}
for $X$, $Y \in \Gamma(T \bar{N})$, which means that 
$\bar{\nabla}^{\prime \alpha}$
and $\bar{\nabla}^{\prime \beta}$ are c-projectively related. 
Here we write $\theta_{0}(X)$ for 
$\theta_{0}(X)\circ p_{N}$ etc. 
\end{proof}

We denote the induced c-projective structure given in Proposition~\ref{projective_str}
by $\mathcal{P}_{\bar{\nabla}^{\prime}}$ (without a label $\alpha$). 
Next we prove that the c-projective Weyl curvature of $\mathcal{P}_{\bar{\nabla}^{\prime}}$
is of type $(1,1)$ (see Theorem \ref{Weyl_bar}).

Note that $\xi, J \xi$ are the fundamental vector fields generated by $1,\sqrt{-1}\in \mathbb{C} = \mathrm{Lie}\, \mathbb{C}^*$, respectively.   
Recall that $A=\nabla J$ and $A_\xi = A_{J\xi}=0$. 
We also have that $L_\xi A =0$, since $L_\xi \nabla=0$ and $L_\xi J =0$.

\begin{lemma}\label{a1_der} 
$L_{J\xi}\nabla = A$, $L_{J\xi}A=-2JA$ and $L_{J\xi}(JA) = 2 A$. 
\end{lemma}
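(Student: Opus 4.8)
The plan is to reduce all three identities to algebraic manipulations of the tensor $A=\nabla J$, exploiting that $\nabla$ is flat and that $J\xi$ has an especially simple covariant derivative. The first thing I would record is that $\nabla_W(J\xi)=JW$ for every $W\in\Gamma(TN)$: indeed $\nabla_W(J\xi)=(\nabla_W J)\xi+J\nabla_W\xi=A_W\xi+JW$, and $A_W\xi=A_\xi W=0$ by the symmetry of $\nabla J$ together with $A_\xi=0$. With this in hand the first identity $L_{J\xi}\nabla=A$ follows from the standard formula for the Lie derivative of a torsion-free connection, $(L_W\nabla)_Y Z=R^\nabla_{W,Y}Z+\nabla_Y(\nabla_Z W)-\nabla_{\nabla_Y Z}W$ (the very manipulation already carried out in Lemma~\ref{der_conn_prime}). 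Taking $W=J\xi$ and using flatness $R^\nabla=0$ together with $\nabla_Z(J\xi)=JZ$, the right-hand side collapses to $\nabla_Y(JZ)-J\nabla_Y Z=(\nabla_Y J)Z=A_Y Z$.

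For the remaining two identities I would pass from $L_{J\xi}$ to $\nabla_{J\xi}$ via the fact that on any tensor $T$ one has $L_{J\xi}T=\nabla_{J\xi}T-\mathcal D_{B}T$, where $\mathcal D_B$ is the tensor derivation induced by the endomorphism field $B=\nabla(J\xi)$; by the first step $B=J$. Applied to the $(1,2)$-tensor $A$ this gives $(L_{J\xi}A)(Y,Z)=(\nabla_{J\xi}A)(Y,Z)-J(A_Y Z)+A_{JY}Z+A_Y(JZ)$. The three algebraic terms are handled by the relations $A_X J=-JA_X$ (from $\nabla(J^2)=0$) and $A_{JX}=-JA_X$ (its consequence via the symmetry of $A$), which together reduce them to $-3\,JA_Y Z$.

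The crux is therefore to identify $\nabla_{J\xi}A$, and I would show $\nabla_{J\xi}A=JA$ by differentiating the identity $A_{J\xi}=0$ of Lemma~\ref{sc:lem}. Writing it as $A(J\xi,Z)\equiv 0$ and applying $\nabla_W$, the Leibniz rule together with $\nabla_W(J\xi)=JW$ and $A_{J\xi}=0$ yields $(\nabla_W A)(J\xi,Z)=-A_{JW}Z=JA_W Z$. Since $\nabla$ is flat, the second covariant derivative $H^\nabla J$ is symmetric in its two differentiation slots (as recorded in the proof of Theorem~\ref{ricci_flat}), so I may interchange $W$ and $J\xi$ to conclude $(\nabla_{J\xi}A)(W,Z)=JA_W Z$, i.e.\ $\nabla_{J\xi}A=JA$. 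Substituting back gives $L_{J\xi}A=JA-3JA=-2JA$. The third identity is then purely formal: since $L_{J\xi}J=0$, the Leibniz rule for the contraction $JA$ gives $L_{J\xi}(JA)=J\,(L_{J\xi}A)=J(-2JA)=2A$, using $J^2=-\mathrm{id}$.

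The main obstacle is precisely the computation of $\nabla_{J\xi}A$: it is the only place where one must go beyond first-order data, and care is needed both with the sign bookkeeping of the tensor derivation $\mathcal D_B$ on the two covariant slots of $A$ and with the justification that $H^\nabla J$ is symmetric in its first two arguments (which rests on the flatness of $\nabla$). Everything else is bracket and Leibniz bookkeeping together with the algebraic relations for $A$ already available before the statement.
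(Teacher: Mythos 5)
Your proof is correct. There is nothing of the authors' to compare it with: the paper states Lemma~\ref{a1_der} without proof, so your write-up supplies the verification the paper leaves to the reader, and it does so using exactly the tools the paper has already set out --- the expansion $(L_W\nabla)_YZ=R^{\nabla}_{W,Y}Z+\nabla_Y\nabla_ZW-\nabla_{\nabla_YZ}W$ underlying Lemma~\ref{der_conn_prime}, the relations $A_\xi=A_{J\xi}=0$ from Lemma~\ref{sc:lem}, the anticommutation $A_XJ=-JA_X$ with its consequence $A_{JX}=-JA_X$, and the total symmetry of $H^{\nabla}J$ recorded in the proof of Theorem~\ref{ricci_flat}. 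I checked the two delicate points. First, the sign bookkeeping in $L_{J\xi}T=\nabla_{J\xi}T-\mathcal{D}_JT$ (with $B=\nabla(J\xi)=J$, which follows as you say from $\nabla_W(J\xi)=A_W\xi+JW=JW$): the three algebraic terms $-J(A_YZ)+A_{JY}Z+A_Y(JZ)$ do sum to $-3JA_YZ$. Second, the crux $\nabla_{J\xi}A=JA$: differentiating $A_{J\xi}=0$ gives $(\nabla_WA)(J\xi,\cdot)=-A_{JW}=JA_W$, and the interchange $(\nabla_WA)(J\xi,\cdot)=(\nabla_{J\xi}A)(W,\cdot)$ is legitimate because $\nabla A=H^{\nabla}J$ is symmetric in the two differentiation slots by the Ricci identity for the flat torsion-free $\nabla$ --- precisely the symmetry the paper invokes in Theorem~\ref{ricci_flat}. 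The third identity then follows from $L_{J\xi}J=0$ and Leibniz, as you state. One remark: the middle identity can be obtained marginally more directly by expanding $(L_{J\xi}A)(Y,Z)=[J\xi,A_YZ]-A_{[J\xi,Y]}Z-A_Y[J\xi,Z]$ with $[J\xi,U]=\nabla_{J\xi}U-JU$, which folds your $\mathcal{D}_J$ step into the brackets; but this is the same computation in different packaging, and your version has the advantage of isolating $\nabla_{J\xi}A=JA$, which is the only genuinely second-order input.
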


Let $\eta$ be a connection form on the principal bundle $p_{N}:N \rightarrow \bar N$.  
As before, we assume that $\mathrm{Ker}\, \eta $ is 
$J$-invariant or, equivalently, that $\eta$ is of type $(1,0)$
 (but not necessarily holomorphic). 
Using $\eta$ we can project the connection 
$\nabla^{\prime}$ on $N$ to a connection $\bar{\nabla}^{\prime \, \eta}$ on 
$\bar{N}$, which is complex with respect to $\bar{J}$, as shown in the proof of Proposition~\ref{projective_str}.  
Note that the quotient $p_{N}:(N,\nabla^{\prime}) \to (\bar{N}, \bar{\nabla}^{\prime \, \eta})$
is an affine submersion with the horizontal distribution $\mathcal{H}:=\mathrm{Ker}\, \eta$ 
(in the sense defined in \cite{AH}).  
From now on the $\eta$-horizontal lift of $X \in T\bar{N}$ is denoted by $\tilde{X}$. 
Note that our sign convention for the curvature tensor is different from the one in \cite{AH}. 
Let $h:TN \to \mathcal{H}$ and $v:TN \to \mathcal{V}$ be the projections with respect to 
the decomposition $TN= \mathcal{H} \oplus \mathcal{V}$, where $\mathcal{V}=\mathrm{Ker}\, p_{N \, \ast}$. 
We define the fundamental tensors $\mathcal{A}^{\nabla^{\prime}}$ and 
$\mathcal{T}^{\nabla^{\prime}}$ by 
\[ \mathcal{A}^{\nabla^{\prime}}_{E} F
=v (\nabla^{\prime}_{hE} hF) + h(\nabla^{\prime}_{hE} vF) \]
and
\[ \mathcal{T}^{\nabla^{\prime}}_{E} F
=h (\nabla^{\prime}_{vE} vF) + v(\nabla^{\prime}_{vE} hF) \]
for $E$, $F \in \Gamma(T N)$. 

\begin{lemma}
We have $ \mathcal{T}^{\nabla^{\prime}}=0$, 
$\mathcal{A}^{\nabla^{\prime}}_{X}{\xi} = X$ and $\mathcal{A}^{\nabla^{\prime}}_{X} J\xi =JX$
for any horizontal vector $X$. 
\end{lemma}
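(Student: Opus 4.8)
The plan is to exploit that $\mathcal{A}^{\nabla^{\prime}}$ and $\mathcal{T}^{\nabla^{\prime}}$ are tensorial in their arguments, being the fundamental tensors of the affine submersion $p_{N}:(N,\nabla^{\prime})\to(\bar N,\bar\nabla^{\prime\,\eta})$ in the sense of \cite{AH}. Thus it suffices to evaluate them on the convenient vector fields $\xi$, $J\xi$ and the $\eta$-horizontal lifts $\tilde Y$, and all computations reduce to the algebraic rules $\nabla^{\prime}\xi=\mathrm{id}$, $\nabla^{\prime}J=0$ and the torsion-freeness of $\nabla^{\prime}$, together with the $J$-invariance of $\mathcal{H}=\mathrm{Ker}\,\eta$.

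For the two $\mathcal{A}^{\nabla^{\prime}}$ identities I would argue directly. Since $\xi$ and $J\xi$ are vertical while $X$ is horizontal, only the second summand of $\mathcal{A}^{\nabla^{\prime}}$ survives, so $\mathcal{A}^{\nabla^{\prime}}_{X}\xi=h(\nabla^{\prime}_{X}\xi)$ and $\mathcal{A}^{\nabla^{\prime}}_{X}J\xi=h(\nabla^{\prime}_{X}(J\xi))$. Now $\nabla^{\prime}_{X}\xi=X$ by $\nabla^{\prime}\xi=\mathrm{id}$, and $h(X)=X$ as $X$ is horizontal; likewise $\nabla^{\prime}_{X}(J\xi)=J\nabla^{\prime}_{X}\xi=JX$ by $\nabla^{\prime}J=0$, and $JX$ is horizontal because $\mathcal{H}$ is $J$-invariant, so $h(JX)=JX$. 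This yields the claimed values.

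For $\mathcal{T}^{\nabla^{\prime}}=0$ I would treat its two summands separately. The first, $h(\nabla^{\prime}_{vE}vF)$, vanishes because the fibers $\langle\xi,J\xi\rangle$ are totally geodesic for $\nabla^{\prime}$: indeed $\nabla^{\prime}_{\xi}\xi=\xi$, $\nabla^{\prime}_{\xi}(J\xi)=J\xi$, $\nabla^{\prime}_{J\xi}\xi=J\xi$ and $\nabla^{\prime}_{J\xi}(J\xi)=-\xi$ are all vertical, using $\nabla^{\prime}\xi=\mathrm{id}$ and $\nabla^{\prime}J=0$. For the second summand $v(\nabla^{\prime}_{vE}\tilde Y)$, the key input is that the $\eta$-horizontal lift $\tilde Y$ is invariant under the principal $\mathbb{C}^{\ast}$-action, whence $[\xi,\tilde Y]=[J\xi,\tilde Y]=0$. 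Combined with torsion-freeness this gives $\nabla^{\prime}_{\xi}\tilde Y=\nabla^{\prime}_{\tilde Y}\xi+[\xi,\tilde Y]=\tilde Y$ and $\nabla^{\prime}_{J\xi}\tilde Y=\nabla^{\prime}_{\tilde Y}(J\xi)+[J\xi,\tilde Y]=J\tilde Y$, both horizontal, so their vertical parts vanish.

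The only non-formal step, and the point I would be most careful about, is the commutation $[\xi,\tilde Y]=[J\xi,\tilde Y]=0$: it rests on the invariance of the principal connection $\eta$, which forces the horizontal lift $\tilde Y$ to be $\mathbb{C}^{\ast}$-invariant and hence to commute with the fundamental vector fields $\xi$ and $J\xi$. Everything else is a short application of $\nabla^{\prime}\xi=\mathrm{id}$, $\nabla^{\prime}J=0$ and the $J$-invariance of $\mathcal{H}$.
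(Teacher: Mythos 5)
Your proof is correct and is precisely the argument the paper has in mind: the lemma is stated there without proof, being treated as an immediate consequence of $\nabla^{\prime}\xi=\mathrm{id}$, $\nabla^{\prime}J=0$, torsion-freeness of $\nabla^{\prime}$, the $J$-invariance of $\mathcal{H}=\mathrm{Ker}\,\eta$, and tensoriality of the fundamental tensors of the affine submersion. You also correctly isolate and justify the only non-formal ingredient, namely $[\xi,\tilde{Y}]=[J\xi,\tilde{Y}]=0$, which holds because $\eta$ is a principal connection, so $\eta$-horizontal lifts are invariant under the $\mathbb{C}^{\ast}$-action generated by the fundamental vector fields $\xi$ and $J\xi$.
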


Let $a$ and $b$ be $(0,2)$-tensors defined by  
\[ \mathcal{A}^{ \nabla^{\prime}}_{X} Y=a(X,Y)\xi + b(X,Y)J\xi \]
for horizontal vectors $X$ and $Y$. 
Since $\nabla^{\prime}$ and the projections $v$, $h$ are $\mathbb{C}^{\ast}$-invariant, 
$\mathcal{A}^{\nabla^{\prime}}$ is $\mathbb{C}^{\ast}$-invariant, and hence, 
$a=p_{N}^{\ast} \bar{a}$ and $b=p_{N}^{\ast} \bar{b}$ for some tensors $\bar{a}$ and $\bar{b}$ on $\bar{N}$. 
For any $(0,2)$-tensor $k$ on a complex manifold with a complex structure $J$, 
define the $(0,2)$-tensor $k_{J}$ by $k_{J}(X,Y):=k(X,JY)$. 

\begin{lemma}\label{vert}
We have 
\begin{align*}
v((\nabla^{\prime}_{\tilde{X}} J) \tilde{Y} )
&=\left( \bar{a}(X,\bar{J}Y) + \bar{b}(X,Y) \right) \xi 
+\left( \bar{b}(X,\bar{J} Y) - \bar{a}(X,Y) \right) J \xi 
\end{align*}
for $X$, $Y \in T \bar{N}$.
\end{lemma}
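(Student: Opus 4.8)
The plan is to reduce the computation of $v((\nabla^{\prime}_{\tilde X}J)\tilde Y)$ entirely to the fundamental tensor $\mathcal A^{\nabla^{\prime}}$, whose vertical output is already encoded by $\bar a$ and $\bar b$. First I would write
\[ (\nabla^{\prime}_{\tilde X}J)\tilde Y = \nabla^{\prime}_{\tilde X}(J\tilde Y) - J(\nabla^{\prime}_{\tilde X}\tilde Y). \]
Since the connection form $\eta$ is of type $(1,0)$, its kernel $\mathcal H$ is $J$-invariant, so the $\eta$-horizontal lift intertwines $\bar J$ and $J$; concretely $J\tilde Y = \widetilde{\bar J Y}$, exactly as in the proof of Proposition~\ref{projective_str}. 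This turns the first term into a derivative of one horizontal lift along another, which is governed by $\mathcal A^{\nabla^{\prime}}$.

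The key structural observation is that $J$ preserves \emph{both} subbundles of the splitting $TN = \mathcal H\oplus\mathcal V$: the horizontal part by hypothesis, and the vertical part $\mathcal V = \langle\xi,J\xi\rangle$ because $J(J\xi)=-\xi$. Hence $J$ commutes with the projections $v$ and $h$. Applying $v$ to the two terms above and using this commutation, I would compute
\[ v(\nabla^{\prime}_{\tilde X}\widetilde{\bar J Y}) = \mathcal A^{\nabla^{\prime}}_{\tilde X}\widetilde{\bar J Y} = \bar a(X,\bar J Y)\xi + \bar b(X,\bar J Y)J\xi, \]
where I have used $a=p_N^{\ast}\bar a$, $b=p_N^{\ast}\bar b$ together with $p_{N\ast}\tilde X = X$ and $p_{N\ast}\widetilde{\bar J Y}=\bar J Y$. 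For the second term,
\[ v(J(\nabla^{\prime}_{\tilde X}\tilde Y)) = J\,v(\nabla^{\prime}_{\tilde X}\tilde Y) = J\,\mathcal A^{\nabla^{\prime}}_{\tilde X}\tilde Y = J(\bar a(X,Y)\xi + \bar b(X,Y)J\xi) = \bar a(X,Y)J\xi - \bar b(X,Y)\xi. \]
Subtracting the second display from the first and collecting the $\xi$- and $J\xi$-components yields exactly the claimed formula.

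I do not expect a genuine obstacle here; the lemma is a bookkeeping consequence of the definitions. The only points requiring care are the two structural facts used above: the lift identity $J\tilde Y=\widetilde{\bar J Y}$ (valid precisely because $\eta$ is of type $(1,0)$) and the commutation of $J$ with the vertical and horizontal projections (valid because $\mathcal V$ is $J$-invariant). Both are immediate once the $J$-invariance of the two distributions is noted, and everything else is the substitution $a\mapsto\bar a$, $b\mapsto\bar b$ through $p_N$ and the single identity $J(J\xi)=-\xi$.
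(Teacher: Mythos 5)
Your proof is correct; the paper states Lemma \ref{vert} without proof, and your computation --- the Leibniz expansion $(\nabla^{\prime}_{\tilde X}J)\tilde Y=\nabla^{\prime}_{\tilde X}(J\tilde Y)-J\nabla^{\prime}_{\tilde X}\tilde Y$, the lift identity $J\tilde Y=\widetilde{\bar J Y}$ coming from $\eta$ being of type $(1,0)$, the commutation of $J$ with the projection $v$ (since $\mathcal{V}=\langle\xi,J\xi\rangle$ is $J$-invariant), and the expansion of $\mathcal{A}^{\nabla^{\prime}}$ via $\bar a$ and $\bar b$ --- is exactly the intended bookkeeping argument. You also correctly refrained from invoking $\nabla^{\prime}J=0$ at this stage (which would trivialize the left-hand side); that identity is deliberately reserved for the subsequent Lemma \ref{A_eq}, where setting your formula to zero yields $\bar b=-\bar a_{\bar J}$.
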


\begin{lemma}\label{A_eq}
We have 
$\bar{b}(X,Y)=-\bar{a}(X,\bar{J}Y)=-\bar{a}_{\bar{J}}(X,Y)$ for tangent vectors $X$ and $Y$ on $\bar{N}$. 
Consequently, the fundamental tensor $\mathcal{A}^{\nabla^{\prime}}$ satisfies 
\begin{align}
\mathcal{A}^{ \nabla^{\prime}}_{ \tilde{X}} \tilde{Y} 
&=\bar{a}(X,Y)\xi - \bar{a}_{\bar{J}}(X,Y) J\xi 
%
\end{align}
for tangent vectors $X$, $Y$ on $\bar{N}$. 
\end{lemma}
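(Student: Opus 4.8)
The plan is to read off both claimed identities directly from Lemma~\ref{vert}, exploiting that $\nabla^{\prime}$ is a complex connection. Recall that $\nabla^{\prime} J=0$ on every special complex manifold, as established when $\nabla^{\prime}$ was introduced. In particular $(\nabla^{\prime}_{\tilde{X}} J)\tilde{Y}=0$, so the left-hand side of the formula in Lemma~\ref{vert} vanishes identically, and hence so does its right-hand side:
\[
\left( \bar{a}(X,\bar{J}Y) + \bar{b}(X,Y) \right) \xi
+\left( \bar{b}(X,\bar{J} Y) - \bar{a}(X,Y) \right) J \xi = 0
\]
for all $X,Y \in T\bar{N}$.

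Next I would use that $\xi$ and $J\xi$ are everywhere linearly independent, since they are the fundamental vector fields generated by $1,\sqrt{-1}\in \mathrm{Lie}\,\mathbb{C}^{\ast}$ and thus span the vertical distribution $\mathcal{V}=\mathrm{Ker}\, p_{N\ast}$ of the principal $\mathbb{C}^{\ast}$-bundle $p_N : N \to \bar{N}$. Therefore both coefficients in the displayed equation must vanish separately. The vanishing of the coefficient of $\xi$ gives
\[
\bar{b}(X,Y) = -\bar{a}(X,\bar{J}Y) = -\bar{a}_{\bar{J}}(X,Y),
\]
which is exactly the first assertion. The vanishing of the coefficient of $J\xi$ yields $\bar{b}(X,\bar{J}Y)=\bar{a}(X,Y)$; I expect this to be not an independent condition but an automatic consistency check, since substituting the first identity and using $\bar{J}^2=-\mathrm{id}$ recovers it. This compatibility is what legitimizes solving for $\bar{b}$ in terms of $\bar{a}$.

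Finally, for the stated consequence, I would substitute the relation $\bar{b}=-\bar{a}_{\bar{J}}$ into the defining expression for the fundamental tensor. By definition $\mathcal{A}^{\nabla^{\prime}}_{\tilde{X}}\tilde{Y}=a(\tilde{X},\tilde{Y})\xi+b(\tilde{X},\tilde{Y})J\xi$, and since $a=p_N^{\ast}\bar{a}$, $b=p_N^{\ast}\bar{b}$, this equals $\bar{a}(X,Y)\xi+\bar{b}(X,Y)J\xi$ along horizontal lifts; replacing $\bar{b}$ gives
\[
\mathcal{A}^{\nabla^{\prime}}_{\tilde{X}}\tilde{Y}=\bar{a}(X,Y)\xi-\bar{a}_{\bar{J}}(X,Y)J\xi.
\]
There is essentially no hard step here: the whole lemma is a direct algebraic corollary of Lemma~\ref{vert} once one invokes $\nabla^{\prime}J=0$. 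The only point requiring a moment's care is confirming that the two coefficient equations are compatible rather than over-determining the tensors, which, as noted, follows from $\bar{J}^2=-\mathrm{id}$.
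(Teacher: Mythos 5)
Your proof is correct and is essentially the paper's own argument: the paper also derives the lemma by combining $\nabla^{\prime}J=0$ with Lemma~\ref{vert} (forcing both coefficients of $\xi$ and $J\xi$ to vanish, as they are pointwise linearly independent fundamental vector fields of the free $\mathbb{C}^{\ast}$-action) and then substituting $\bar{b}=-\bar{a}_{\bar{J}}$ into the defining decomposition of $\mathcal{A}^{\nabla^{\prime}}$. You merely spell out the details, including the consistency of the second coefficient equation via $\bar{J}^{2}=-\mathrm{id}$, which the paper leaves implicit.
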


\begin{proof}
By $\nabla^{\prime}J=0$ and Lemma \ref{vert}, we have the conclusion. 
\end{proof}

Let $(r,\theta)$ be the polar coordinates with respect to a (smooth) local trivialization 
$p_{N}^{-1}(\bar{U}) \cong \bar{U} \times \mathbb{C}^*$ of the 
principal $\mathbb{C}^*$-bundle $p_{N} : N \rightarrow \bar{N}$ 
such that $\xi=r \partial/ \partial r$ and 
$J\xi = \partial/\partial \theta$. 
A principal connection $\eta$ is locally given by 
\[ \eta := \eta_{1} \otimes 1 +\eta_{2} \otimes \sqrt{-1} 
= p_{N}^{\ast}(\gamma_{1} \otimes 1+ \gamma_{2} \otimes \sqrt{-1}) 
+ \left(\frac{dr}{r} \otimes 1 +d \theta \otimes \sqrt{-1} \right) \]
for a $\mathbb{C}$-valued one-form $\gamma_{1} \otimes 1+ \gamma_{2} \otimes \sqrt{-1}$
on $\bar{U} \subset \bar{N}$. 
For each local trivialization 
$p_{N}^{-1}(\bar{U}) \cong \bar{U} \times \mathbb{C}^*$,  
we set 
\[ B:=e^{2 \theta J} A \,\,\,  (e^{2 \theta J}=(\cos 2 \theta) \mathrm{id}+(\sin 2 \theta)J). \]


The symmetric $(1,2)$-tensor field $B$ is defined {\it locally} and 
$B$ is projectable by Lemma \ref{a1_der}, i.e.\ horizontal (i.e.\ $B_\xi = B_{J\xi}=0$) and $\mathbb{C}^*$-invariant. 
Therefore we obtain an induced {\it locally} defined symmetric tensor field $\bar{B}$ on 
$\bar{N}$.

\begin{lemma}\label{square_B}
The tensor $B^2 : (X,Y) \mapsto B_X\circ B_Y$ is a globally defined tensor field on $N$, in particular, 
$[B,B]$ is so. 
As a consequence, we have the {\it globally} defined tensor fields $\bar{B}^{2}$ and $[\bar{B},\bar{B}]$ on $\bar{N}$. 
\end{lemma}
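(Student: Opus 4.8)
The plan is to show that, although $B=e^{2\theta J}A$ depends on the chosen local trivialization through the angular coordinate $\theta$, this dependence cancels upon forming the endomorphism product, so that $B^2$ (and hence $[B,B]$) is trivialization-independent and therefore global.

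First I determine the transition rule for $B$. Under a change of local trivialization $p_{N}^{-1}(\bar U)\cong \bar U\times\mathbb{C}^*$ the transition function is a $\mathbb{C}^*$-valued function $g$ on the base, and writing the fibre coordinate in polar form shows that the angular coordinate transforms by $\theta\mapsto\theta+\psi$ with $\psi:=\arg g$ constant along the fibres (that is, pulled back from $\bar N$), consistently with $J\xi=\partial/\partial\theta$. Since $A=\nabla J$ is a globally defined tensor on $N$ and $e^{2\theta J}$, $e^{2\psi J}$ commute as functions of $J$, the two local expressions for $B$ are related by $B'=e^{2\psi J}B$, i.e. $B'_{X}=e^{2\psi J}\circ B_{X}$ for all $X$.

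Next I record the anticommutation of $A_{X}$ with $J$. From $A=\nabla J$ and $0=\nabla_{X}(J^{2})=A_{X}J+JA_{X}$ one gets $A_{X}J=-JA_{X}$, hence $A_{X}e^{cJ}=e^{-cJ}A_{X}$ for any function $c$; consequently $B_{X}e^{cJ}=e^{2\theta J}A_{X}e^{cJ}=e^{2\theta J}e^{-cJ}A_{X}=e^{-cJ}B_{X}$. Combining this with the transition rule, I compute $B'_{X}\circ B'_{Y}=e^{2\psi J}B_{X}e^{2\psi J}B_{Y}=e^{2\psi J}(e^{-2\psi J}B_{X})B_{Y}=B_{X}\circ B_{Y}$, so $B^{2}$ is independent of the trivialization and therefore globally defined on $N$. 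The commutator is then global as well, since $[B,B](X,Y)=[B_{X},B_{Y}]=B^{2}(X,Y)-B^{2}(Y,X)$.

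Finally I pass to $\bar N$. The tensor $B$ is horizontal, as $B_{\xi}=B_{J\xi}=0$ and moreover $B_{X}\xi=e^{2\theta J}A_{\xi}X=0$, $B_{X}J\xi=e^{2\theta J}A_{J\xi}X=0$ by the symmetry of $A$ and $A_{\xi}=A_{J\xi}=0$ (see Lemma \ref{sc:lem}), and it is $\mathbb{C}^*$-invariant; these properties are inherited by $B^{2}$ and $[B,B]$, which, being in addition globally defined on $N$, descend to globally defined tensor fields $\bar B^{2}$ and $[\bar B,\bar B]$ on $\bar N$ that agree locally with the square and commutator of the locally defined $\bar B$. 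The crux of the argument is the anticommutation $A_{X}J=-JA_{X}$: it is precisely this that makes the rotation factors $e^{\pm 2\psi J}$ produced by the change of trivialization cancel in the product, whereas for a generic $B_{X}$ a residual conjugation by $e^{2\psi J}$ would survive and obstruct global definedness.
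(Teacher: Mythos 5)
Your argument is correct, and it runs on the same engine as the paper's proof --- the anticommutation $A_XJ=-JA_X$, which makes rotation factors $e^{cJ}$ flip when moved past $A_X$ --- but you package it as a cocycle/patch-independence argument, whereas the paper's entire proof is the single identity $B^2=A^2$: applying your cancellation $A_Xe^{cJ}=e^{-cJ}A_X$ with the \emph{full} angle $c=2\theta$ rather than only the transition angle $\psi$ gives $B_X\circ B_Y=e^{2\theta J}A_Xe^{2\theta J}A_Y=e^{2\theta J}e^{-2\theta J}A_XA_Y=A_X\circ A_Y$, so $B^2$ coincides with the square of the globally defined tensor $A=\nabla J$ and the globality of $B^2$, hence of $[B,B]$ and, after projection, of $\bar{B}^2$ and $[\bar{B},\bar{B}]$, is immediate. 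What the paper's formulation buys is more than brevity: the explicit identification $B^2=A^2$ is used quantitatively later (in Example~\ref{ex_conic}, where $\mathrm{Tr}\,A^2$ is computed in coordinates and, via this lemma, induces the tensor $\bar{\cal B}$ on the quotient), while your version establishes only well-definedness without naming the global tensor. What your version buys is a self-contained verification of the transition behaviour $B'=e^{2\psi J}B$ with $\psi=\arg g$, including the harmless multivaluedness of $\arg$ (immaterial since $e^{4\pi J}=\mathrm{id}$), which the paper leaves implicit in calling $B$ ``locally defined''. Your final descent step is likewise consistent with the paper: projectability of $B$ (horizontality $B_\xi=B_{J\xi}=0$ and $\mathbb{C}^*$-invariance, via Lemma~\ref{a1_der}) is recorded just before the lemma, and these properties are inherited by $B^2$ and $[B,B]$ exactly as you say; your extra observations $B_X\xi=B_XJ\xi=0$ are true (by symmetry of $B$) but not needed for the descent, which requires only vanishing on vertical arguments plus invariance under $\xi$ and $J\xi$.
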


\begin{proof}
It follows from $B^{2}=A^{2}$. 
\end{proof}

For a $(0,2)$-tensor $a$ and a $(1,1)$-tensor $K$, we define an $\mathrm{End}(TN)$-valued $2$-form 
$a \wedge K$ by 
\[  (a \wedge K)_{X,Y}Z=a(X,Z)KY -a(Y,Z)KX \]
for tangent vectors $X$, $Y$ and $Z$.

\begin{proposition}\label{curv} 
The curvature $R^{ \bar{\nabla}^{\prime \, \eta} }$ of  $\bar{\nabla}^{\prime \, \eta}$ is of the form 
\begin{align*}
R^{ \bar{\nabla}^{\prime \, \eta}} 
=& 
-\frac{1}{4}[\bar{B},\bar{B}]
 +2 \bar{a}^{a} \otimes Id - 2 (\bar{a}_{\bar{J}})^{a} \otimes \bar{J} 
+ \bar{a} \wedge Id - \bar{a}_{\bar{J}} \wedge \bar{J},  
\end{align*}
where  
$(\, \cdot \,)^{a}$ denotes anti-symmetrization. Moreover we have 
$d \gamma_{1} =-2 \bar{a}^{a}$ and 
$d \gamma_{2} =2 (\bar{a}_{\bar{J}})^{a}$. 
\end{proposition}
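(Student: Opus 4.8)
The plan is to regard $p_{N}:(N,\nabla^{\prime})\to(\bar N,\bar\nabla^{\prime\,\eta})$ as an affine submersion with totally geodesic fibers (recall $\mathcal T^{\nabla^{\prime}}=0$) and to extract the base curvature from the ambient curvature $R^{\nabla^{\prime}}$ via the O'Neill-type fundamental equations of \cite{AH}. Writing $\tilde X$ for the $\eta$-horizontal lift of $X\in T\bar N$, the defining relation $\nabla^{\prime}_{\tilde X}\tilde Y=\widetilde{\bar\nabla^{\prime\,\eta}_{X}Y}+\mathcal A^{\nabla^{\prime}}_{\tilde X}\tilde Y$ together with $\mathcal T^{\nabla^{\prime}}=0$ gives, after taking horizontal parts,
\[
\widetilde{R^{\bar\nabla^{\prime\,\eta}}_{X,Y}Z}
= h\,R^{\nabla^{\prime}}_{\tilde X,\tilde Y}\tilde Z
-\mathcal A^{\nabla^{\prime}}_{\tilde X}\mathcal A^{\nabla^{\prime}}_{\tilde Y}\tilde Z
+\mathcal A^{\nabla^{\prime}}_{\tilde Y}\mathcal A^{\nabla^{\prime}}_{\tilde X}\tilde Z
+\nabla^{\prime}_{v[\tilde X,\tilde Y]}\tilde Z .
\]
Thus the whole computation reduces to evaluating the four terms on the right and projecting by $p_{N\ast}$.

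For the first term I would use Lemma~\ref{curvature_p}, $R^{\nabla^{\prime}}_{\tilde X,\tilde Y}=-\tfrac14[A_{\tilde X},A_{\tilde Y}]$, together with the identity $[B,B]=[A,A]$, which follows from $A_{X}J=-JA_{X}$ (giving $B_{X}B_{Y}=A_{X}A_{Y}$, as in the proof of Lemma~\ref{square_B}). Since $[B,B]$ is projectable with projection $[\bar B,\bar B]$ (Lemma~\ref{square_B}) and $p_{N\ast}$ annihilates vertical vectors, the projection of $h\,R^{\nabla^{\prime}}_{\tilde X,\tilde Y}\tilde Z$ equals $-\tfrac14[\bar B,\bar B]_{X,Y}Z$, producing the first summand. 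For the two $\mathcal A\mathcal A$ terms I would insert Lemma~\ref{A_eq}, $\mathcal A^{\nabla^{\prime}}_{\tilde Y}\tilde Z=\bar a(Y,Z)\xi-\bar a_{\bar J}(Y,Z)J\xi$, and apply $\mathcal A^{\nabla^{\prime}}_{\tilde X}\xi=\tilde X$ and $\mathcal A^{\nabla^{\prime}}_{\tilde X}(J\xi)=J\tilde X$ (from $\nabla^{\prime}\xi=\mathrm{id}$ and $\nabla^{\prime}J=0$); after projection and antisymmetrization in $X,Y$ these yield precisely $\bar a\wedge\mathrm{Id}-\bar a_{\bar J}\wedge\bar J$. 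For the last term one notes, by torsion-freeness and Lemma~\ref{A_eq}, that $v[\tilde X,\tilde Y]=\mathcal A^{\nabla^{\prime}}_{\tilde X}\tilde Y-\mathcal A^{\nabla^{\prime}}_{\tilde Y}\tilde X=2\bar a^{a}(X,Y)\xi-2(\bar a_{\bar J})^{a}(X,Y)J\xi$, and then computes the fiber-direction derivatives $\nabla^{\prime}_{\xi}\tilde Z=\tilde Z$ and $\nabla^{\prime}_{J\xi}\tilde Z=J\tilde Z$, which follow from $\nabla^{\prime}\xi=\mathrm{id}$, $\nabla^{\prime}J=0$ and the $\mathbb C^{\ast}$-invariance of $\eta$ (so that $[\xi,\tilde Z]=[J\xi,\tilde Z]=0$). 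This gives the remaining summands $2\bar a^{a}\otimes\mathrm{Id}-2(\bar a_{\bar J})^{a}\otimes\bar J$, and summing the four contributions reproduces the stated formula.

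For the two identities involving $\gamma_{1},\gamma_{2}$ I would compute the curvature of the abelian connection $\eta=\eta_{1}\otimes 1+\eta_{2}\otimes\sqrt{-1}$ directly. From $\eta_{1}=p_{N}^{\ast}\gamma_{1}+dr/r$ and $\eta_{2}=p_{N}^{\ast}\gamma_{2}+d\theta$ one gets $d\eta_{1}=p_{N}^{\ast}d\gamma_{1}$ and $d\eta_{2}=p_{N}^{\ast}d\gamma_{2}$, while on horizontal lifts $d\eta_{i}(\tilde X,\tilde Y)=-\eta_{i}(v[\tilde X,\tilde Y])$. Evaluating $\eta_{1},\eta_{2}$ on $\xi=r\,\partial/\partial r$ and $J\xi=\partial/\partial\theta$ against the above expression for $v[\tilde X,\tilde Y]$ immediately gives $d\gamma_{1}=-2\bar a^{a}$ and $d\gamma_{2}=2(\bar a_{\bar J})^{a}$. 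The main obstacle is the bookkeeping in the O'Neill formula: because $\mathcal A^{\nabla^{\prime}}$ is not skew on horizontal vectors, its symmetric part (carried by the symmetric part of $\bar a$) enters through the $\mathcal A\mathcal A$ terms while its antisymmetric part enters through $v[\tilde X,\tilde Y]$, and keeping the signs and the separation of $\bar a$ from $\bar a_{\bar J}$ straight is exactly what distributes the answer into the $\wedge$ pieces and the $\otimes$ pieces.
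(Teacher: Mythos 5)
Your proposal is correct and follows essentially the same route as the paper: the fundamental equation for an affine submersion from \cite{AH}, Lemma \ref{curvature_p} with $B_XB_Y=A_XA_Y$ for the $[\bar{B},\bar{B}]$-term, Lemma \ref{A_eq} together with $\mathcal{A}^{\nabla^{\prime}}_{\tilde{X}}\xi=\tilde{X}$, $\mathcal{A}^{\nabla^{\prime}}_{\tilde{X}}J\xi=J\tilde{X}$ for the $\mathcal{A}\mathcal{A}$-terms, and $v[\tilde{X},\tilde{Y}]=\mathcal{A}^{\nabla^{\prime}}_{\tilde{X}}\tilde{Y}-\mathcal{A}^{\nabla^{\prime}}_{\tilde{Y}}\tilde{X}$ for the remaining term. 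The only (immaterial) difference is bookkeeping order: the paper first expresses $v[\tilde{X},\tilde{Y}]$ through $d\gamma_{1},d\gamma_{2}$ via the structure equation and then identifies these with $-2\bar{a}^{a}$ and $2(\bar{a}_{\bar{J}})^{a}$, whereas you evaluate $v[\tilde{X},\tilde{Y}]$ directly from $\mathcal{A}^{\nabla^{\prime}}$ (using $\nabla^{\prime}_{\xi}\tilde{Z}=\tilde{Z}$, $\nabla^{\prime}_{J\xi}\tilde{Z}=J\tilde{Z}$) and verify the $d\gamma_{i}$ identities afterwards.
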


\begin{proof} 
By the fundamental equation for an affine submersion \cite{AH}, we have
\[ (R^{\bar{\nabla}^{\prime \, \eta}}_{X,Y} Z){\,}^{\tilde{}}
= h(R^{\nabla^{\prime}}_{\tilde{X},\tilde{Y}} \tilde{Z})+
h(\nabla^{\prime}_{v[\tilde{X},\tilde{Y}]} \tilde{Z}) 
+\mathcal{A}^{\nabla^{\prime}}_{\tilde{Y}} \mathcal{A}^{\nabla^{\prime}}_{\tilde{X}} \tilde{Z}
-\mathcal{A}^{\nabla^{\prime}}_{\tilde{X}} \mathcal{A}^{\nabla^{\prime}}_{\tilde{Y}} \tilde{Z}
\]
for $X$, $Y$, $Z \in \Gamma(T \bar{N})$. 
Since
\begin{align*}
v[\tilde{X},\tilde{Y}] &=\eta_{1}([\tilde{X},\tilde{Y}])\xi + \eta_{2} ([\tilde{X},\tilde{Y}]) J \xi \\
                            &=-(d \eta_{1})(\tilde{X},\tilde{Y})\xi - (d \eta_{2}) (\tilde{X},\tilde{Y}) J \xi \\
                            &=-(d \gamma_{1})(X,Y) \xi -(d \gamma_{2})(X,Y) J \xi,  
\end{align*}
we have
\begin{align*}
h(\nabla^{\prime}_{v[\tilde{X},\tilde{Y}]} \tilde{Z}) 
&=h(\nabla^{\prime}_{\tilde{Z}} v[\tilde{X},\tilde{Y}])  \\
&=h(\nabla^{\prime}_{\tilde{Z}} (-(d \gamma_{1})(X,Y) \xi -(d \gamma_{2})(X,Y) J \xi ) ) \\
&=-(d \gamma_{1})(X,Y) \tilde{Z} -(d \gamma_{2})(X,Y) (\bar{J}Z)^{\tilde{}}. 
\end{align*}
Moreover, by 
\[ \mathcal{A}^{\nabla^{\prime}}_{\tilde{X}}\tilde{Y}- \mathcal{A}^{\nabla^{\prime}}_{\tilde{Y}}\tilde{X} 
=v[\tilde{X},\tilde{Y}] =-d \gamma_{1}(X,Y) \xi -d \gamma_{2}(X,Y) J \xi, \]
we have $d \gamma_{1} =-2 \bar{a}^{a}$ and 
$d \gamma_{2} =2 (\bar{a}_{\bar{J}})^{a}$. 
\end{proof}

Now we set $\dim N=2(n+1)$.
By Proposition \ref{curv} and $\mathrm{Tr} \bar{B}_{X}=0$ for all 
$X \in T\bar{N}$, we obtain 
\begin{align}
Ric^{ \bar{\nabla}^{\prime \, \eta}}(Y,Z)
=&\frac{1}{4}\mathrm{Tr} \bar{B}_{Y}\bar{B}_{Z}
   +(\bar{a}(Z,Y)-\bar{a}(Y,Z)) \label{ricci_proj_a} \\
  &-(\bar{a}(\bar{J}Y,\bar{J}Z)+\bar{a}(Y,Z)) 
    -2n \bar{a}(Y,Z)+\bar{a}(Y,Z) 
    -\bar{a}(\bar{J}Y,\bar{J}Z) \nonumber \\
=&\frac{1}{4}\mathrm{Tr} \bar{B}_{Y}\bar{B}_{Z}
  -(2n+1) \bar{a}(Y,Z)+\bar{a}(Z,Y)  \nonumber \\
  &-\bar{a}(\bar{J}Y,\bar{J}Z)-\bar{a}(\bar{J}Z,\bar{J}Y). \nonumber
\end{align}

We define a $(0,2)$-tensor $P^{D}$ on a complex manifold 
$(M,J)$, which is called 
the {\cmssl Rho tensor} of a connection $D$,  by 
\begin{align*}
P^{D}(X,Y) =& \frac{1}{m+1} \left(
Ric^{D}(X,Y)+\frac{1}{m-1} 
\left( (Ric^{D})^{s}(X,Y) -(Ric^{D})^{s}(JX, JY) \right) \right), 
\end{align*}
for $X$, $Y \in T M$, where $2m=\dim M \geq 4$, 
$Ric^{D}$ is the Ricci tensor of $D$ and 
$(\, \cdot \,)^{s}$ is the symmetrization of a $(0,2)$-tensor. 
The {\cmssl c-projective Weyl curvature} $W^{c, [\bar{D}]}$ 
of a c-projective structure $[\bar{D}]$ is given by 
\begin{align}\label{c_weyl}
W^{c,[\bar{D}]} =& R^{\bar{D}} +(P^{\bar{D}})^{a} \otimes Id 
       -(P^{\bar{D}}_{\bar{J}})^{a} \otimes \bar{J}+\frac{1}{2} P^{\bar{D}} \wedge Id -\frac{1}{2} P^{\bar{D}}_{\bar{J}} \wedge \bar{J}. 
\end{align}
See \cite{CEMN}.  
We shall compute the c-projective Weyl curvature of $[ \bar{\nabla}^{\prime \, \eta}]$. 
From (\ref{ricci_proj_a}) it holds
\begin{align*}
(Ric^{ \bar{\nabla}^{\prime \, \eta}})^{s}(Y,Z)
&=\frac{1}{4}\mathrm{Tr} \bar{B}_{Y}\bar{B}_{Z}
   -2n \bar{a}^{s}(Y,Z)-2 \bar{a}^{s}(\bar{J}Y, \bar{J}Z), \\ 
   (Ric^{ \bar{\nabla}^{\prime \, \eta}})^{s}(\bar{J}Y,\bar{J}Z)
&=\frac{1}{4}\mathrm{Tr} \bar{B}_{Y}\bar{B}_{Z}
   -2n \bar{a}^{s}(\bar{J}Y,\bar{J}Z)-2 \bar{a}^{s}(Y, Z)
   \end{align*}
and hence  
\begin{align*}
(Ric^{ \bar{\nabla}^{\prime \, \eta}})^{s}(Y,Z)
-(Ric^{ \bar{\nabla}^{\prime \, \eta}})^{s}(\bar{J}Y,\bar{J}Z)
&=-2(n-1) \left( \bar{a}^{s}(Y,Z)- \bar{a}^{s}( \bar{J}Y, \bar{J}Z) \right). 
\end{align*}
From these equations, it follows that 
\begin{align*}
(n+1)  P^{ \bar{\nabla}^{\prime \, \eta}}(Y,Z) 
=& \frac{1}{4}\mathrm{Tr} \bar{B}_{Y}\bar{B}_{Z}
  -(2n+1) \bar{a}(Y,Z)+\bar{a}(Z,Y) 
  -\bar{a}(\bar{J}Y,\bar{J}Z)-\bar{a}(\bar{J}Z,\bar{J}Y) \\
  &-2 (\bar{a}^{s}(Y,Z)- \bar{a}^{s}( \bar{J}Y, \bar{J}Z) ) \\
=&\frac{1}{4}\mathrm{Tr} \bar{B}_{Y}\bar{B}_{Z}
  -(2n+1) \bar{a}(Y,Z)+\bar{a}(Z,Y) -(\bar{a}(Y,Z)+\bar{a}(Z,Y)) \\
 =& \frac{1}{4}\mathrm{Tr} \bar{B}_{Y}\bar{B}_{Z} 
     -2(n+1)\bar{a}(Y,Z). 
\end{align*}
Setting $\bar{\cal B}(Y,Z)=\mathrm{Tr} \bar{B}_{Y}\bar{B}_{Z}$, 
which is a symmetric, $\bar{J}$-hermitian globally defined 
$(0,2)$-tensor on $\bar{N}$, 
we have
\begin{align}\label{bar_a}
\bar{a}=\frac{1}{8(n+1)} \bar{\cal B}
                 -\frac{1}{2} P^{ \bar{\nabla}^{\prime \, \eta}}. 
\end{align}
Therefore the coefficients of the curvature form $d\eta=d\gamma_1 + \sqrt{-1}d\gamma_2= -2\bar{a}^a +2\sqrt{-1}(\bar{a}_{\bar J})^a$ are determined by 
\begin{align}
\bar{a}^{a} &=-\frac{1}{2} (P^{\bar{\nabla}^{\prime \, \eta}})^{a} \left(=-\frac{1}{2(n+1)}
 (Ric^{ \bar{\nabla}^{\prime \, \eta}})^{a} \right), \label{a_1} \\
(\bar{a}_{\bar{J}})^{a} & = \frac{1}{8(n+1)} \bar{\cal B}_{\bar{J}}
-\frac{1}{2} (P^{ \bar{\nabla}^{\prime \, \eta}}_{\bar{J}})^{a}
\left( =  
\frac{1}{8(n+1)} \bar{\cal B}_{\bar{J}}
-\frac{1}{2(n+1)} (Ric^{ \bar{\nabla}^{\prime \, \eta}}_{\bar{J}} )^{a}
 \right) . \label{a_2}
\end{align}

By the above calculations we arrive at the following theorem.   
\begin{theorem}\label{Weyl_bar}
Let $(N,J,\n , \xi )$ be a conical special complex manifold 
which is the total space of a (holomorphic) principal 
$\mathbb{C}^*$-bundle $p_{N} : N \rightarrow \bar{N}$,  
the base of which is a projective special complex manifold $\bar{N}$ with 
$\dim \bar{N} =2n \geq 4$. 
The c-projective Weyl curvature $W^{c,\mathcal{P}_{\bar{\nabla}^{\prime}} }$ of the 
canonically induced c-projective structure $\mathcal{P}_{\bar{\nabla}^{\prime}}$ 
is given by 
\begin{align*}
W^{c,\mathcal{P}_{\bar{\nabla}^{\prime}} }
=& -\frac{1}{4} [\bar{B},\bar{B}] 
-\frac{1}{4(n+1)} \bar{\cal B}_{\bar{J}} \otimes \bar{J} 
+\frac{1}{8(n+1)} \bar{\cal B} \wedge \mathrm{Id}
    -\frac{1}{8(n+1)} \bar{\cal B}_{\bar{J}} \wedge \bar{J}. 
\end{align*}
In particular, $W^{c,\mathcal{P}_{\bar{\nabla}^{\prime}} }_{\bar{J}(\, \cdot \, ),\bar{J}(\, \cdot \, )}=W^{c,\mathcal{P}_{\bar{\nabla}^{\prime}} }$, that is, 
$W^{c,\mathcal{P}_{\bar{\nabla}^{\prime}} }$ is of type $(1,1)$ 
as an $\mathrm{End}(T\bar{N})$-valued two-form. 
\end{theorem}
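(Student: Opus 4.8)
The plan is to read off the formula for $W^{c,\mathcal{P}_{\bar{\nabla}^{\prime}}}$ by substituting the curvature expression of Proposition~\ref{curv} together with the relation (\ref{bar_a}) into the definition (\ref{c_weyl}) of the c-projective Weyl curvature, and then to determine the type directly from the four surviving terms. Writing $P:=P^{\bar{\nabla}^{\prime\,\eta}}$ for the Rho tensor, the organizing observation is that the curvature of Proposition~\ref{curv},
\[ R^{\bar{\nabla}^{\prime\,\eta}} = -\tfrac14[\bar{B},\bar{B}] + 2\bar{a}^{a}\otimes Id - 2(\bar{a}_{\bar{J}})^{a}\otimes\bar{J} + \bar{a}\wedge Id - \bar{a}_{\bar{J}}\wedge\bar{J}, \]
has exactly the same structural shape as the correction terms $P^{a}\otimes Id-(P_{\bar{J}})^{a}\otimes\bar{J}+\tfrac12 P\wedge Id-\tfrac12 P_{\bar{J}}\wedge\bar{J}$ in (\ref{c_weyl}), with $\bar{a}$ playing the role of $P$. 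Since (\ref{bar_a}) reads $P=\frac{1}{4(n+1)}\bar{\cal B}-2\bar{a}$, I would split $P$ into its $\bar{a}$-part and its $\bar{\cal B}$-part and handle the two contributions separately.

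Inserting the $-2\bar{a}$ part of $P$ into the four correction terms yields $-2\bar{a}^{a}\otimes Id+2(\bar{a}_{\bar{J}})^{a}\otimes\bar{J}-\bar{a}\wedge Id+\bar{a}_{\bar{J}}\wedge\bar{J}$, which cancels the $\bar{a}$-dependent terms of $R^{\bar{\nabla}^{\prime\,\eta}}$ term by term; this is exactly the point where the coefficients $2$ and $1$ of the $\otimes$- and $\wedge$-terms in Proposition~\ref{curv}, being twice the coefficients $1$ and $\tfrac12$ in (\ref{c_weyl}), conspire with the factor $-2$ to produce a clean cancellation. What remains from the curvature is $-\tfrac14[\bar{B},\bar{B}]$. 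For the $\frac{1}{4(n+1)}\bar{\cal B}$ part I would use that $\bar{\cal B}$ is symmetric, so $(\bar{\cal B})^{a}=0$, while $\bar{\cal B}_{\bar{J}}$ is antisymmetric---an immediate consequence of $\bar{\cal B}$ being symmetric and $\bar{J}$-hermitian---so $(\bar{\cal B}_{\bar{J}})^{a}=\bar{\cal B}_{\bar{J}}$. This collapses the $\bar{\cal B}$-contribution to precisely $-\frac{1}{4(n+1)}\bar{\cal B}_{\bar{J}}\otimes\bar{J}+\frac{1}{8(n+1)}\bar{\cal B}\wedge Id-\frac{1}{8(n+1)}\bar{\cal B}_{\bar{J}}\wedge\bar{J}$, giving the asserted formula.

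For the type-$(1,1)$ claim I would inspect the three surviving families of terms, using that $B=e^{2\theta J}A$ with $A=\nabla J$. Differentiating $J^{2}=-\mathrm{id}$ gives $A_{X}J=-JA_{X}$, and together with $A_{JX}=-JA_{X}$ (already used before Lemma~\ref{curvature_p}) this transfers, since $e^{2\theta J}$ commutes with $J$, to $\bar{B}_{X}\bar{J}=-\bar{J}\bar{B}_{X}$ and $\bar{B}_{\bar{J}X}=-\bar{J}\bar{B}_{X}$. A short computation then gives $[\bar{B}_{\bar{J}X},\bar{B}_{\bar{J}Y}]=[\bar{B}_{X},\bar{B}_{Y}]$, so the globally defined tensor $[\bar{B},\bar{B}]$ (Lemma~\ref{square_B}) is of type $(1,1)$, the local relations sufficing pointwise. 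The term $\bar{\cal B}_{\bar{J}}\otimes\bar{J}$ is of type $(1,1)$ because $\bar{\cal B}_{\bar{J}}$ is, which follows directly from the $\bar{J}$-hermitian symmetry of $\bar{\cal B}$. Finally, although neither $\bar{\cal B}\wedge Id$ nor $\bar{\cal B}_{\bar{J}}\wedge\bar{J}$ is separately of type $(1,1)$, their difference is: evaluating $(\bar{\cal B}\wedge Id-\bar{\cal B}_{\bar{J}}\wedge\bar{J})_{\bar{J}X,\bar{J}Y}$ on $Z$ and applying $\bar{\cal B}(\bar{J}X,Z)=-\bar{\cal B}_{\bar{J}}(X,Z)$ and $\bar{\cal B}_{\bar{J}}(\bar{J}X,Z)=\bar{\cal B}(X,Z)$ reproduces its value at $(X,Y)$.

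I expect the only genuine subtlety to be this last step: one must combine the two wedge terms \emph{before} testing the type, since each individually mixes types and only their specific difference is $(1,1)$. Everything else is bookkeeping, made transparent by the coincidence of shape between Proposition~\ref{curv} and (\ref{c_weyl}).
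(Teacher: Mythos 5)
Your proposal is correct and follows essentially the same route as the paper: it substitutes Proposition~\ref{curv} and the relation (\ref{bar_a}) into the definition (\ref{c_weyl}), uses the symmetry of $\bar{\cal B}$ (hence antisymmetry of $\bar{\cal B}_{\bar{J}}$) to collapse the Rho-tensor terms and cancel all $\bar{a}$-contributions, and then verifies the $(1,1)$-type using exactly the paper's grouping, namely that $[\bar{B},\bar{B}]$, $\bar{\cal B}_{\bar{J}}$ and the combined term $\bar{\cal B}\wedge\mathrm{Id}-\bar{\cal B}_{\bar{J}}\wedge\bar{J}$ are each of type $(1,1)$. Your write-up merely makes explicit the bookkeeping (the shape-matching cancellation and the anticommutation relations $\bar{B}_{\bar{J}X}=-\bar{J}\bar{B}_{X}=\bar{B}_X\bar J$) that the paper leaves to the reader.
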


\begin{proof} 
Take a principal connection $\eta$ of type $(1,0)$. 
By Proposition \ref{projective_str}, the canonically induced c-projective structure is 
$[\bar{\nabla}^{\prime \, \eta}]$.   
From Proposition \ref{curv}, equation (\ref{c_weyl}) and the symmetry of $\bar{\cal B}$,  
it holds that
\begin{align*}\label{a_3}
W^{c, [\bar{\nabla}^{\prime \, \eta}]} 
=& -\frac{1}{4} [\bar{B},\bar{B}] 
-\frac{1}{4(n+1)} \bar{\cal B}_{\bar{J}} \otimes \bar{J} 
+\frac{1}{8(n+1)} \bar{\cal B} \wedge \mathrm{Id}
    -\frac{1}{8(n+1)} \bar{\cal B}_{\bar{J}} \wedge \bar{J}.
\end{align*}
Since $\bar{\cal B}_{\bar{J}}$, $[\bar{B},\bar{B}]$ and 
$\bar{\cal B} \wedge \mathrm{Id}
-\bar{\cal B}_{\bar{J}} \wedge \bar{J}$ are of type $(1,1)$, 
$W^{c,\mathcal{P}_{\bar{\nabla}^{\prime}} }$ is of type $(1,1)$.
\end{proof}
The following corollary is a direct consequence of Theorem \ref{Weyl_bar}. 

\begin{corollary}
Any complex manifold $(\bar{N},\bar{J})$ with 
a c-projective structure $\mathcal{P}$ such that 
$W^{c,\mathcal{P}}$ is not of type $(1,1)$ 
can not be realized as 
a projective special complex manifold whose canonical c-projective structure 
is $\mathcal{P}$. 
\end{corollary}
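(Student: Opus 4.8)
The plan is to derive the corollary as the contrapositive of Theorem~\ref{Weyl_bar}. Since the corollary is a non-realizability statement, the natural route is a short proof by contradiction that invokes the theorem directly; no new computation is required.

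First I would suppose, toward a contradiction, that the complex manifold $(\bar{N},\bar{J})$ equipped with the c-projective structure $\mathcal{P}$ does admit a realization as a projective special complex manifold whose canonical c-projective structure equals $\mathcal{P}$. Unwinding the definitions, this means precisely that there exists a conical special complex manifold $(N,J,\nabla,\xi)$ for which $p_{N}:N\to\bar{N}$ is a principal $\mathbb{C}^{\ast}$-bundle, with the principal action generated by $\xi-\sqrt{-1}\,J\xi$, and for which the canonically induced c-projective structure $\mathcal{P}_{\bar{\nabla}^{\prime}}$ produced by Proposition~\ref{projective_str} coincides with $\mathcal{P}$. Note that the dimension requirement $\dim\bar{N}=2n\geq 4$ of Theorem~\ref{Weyl_bar} is automatic here: the c-projective Weyl curvature $W^{c,\mathcal{P}}$ is defined only in real dimension at least four, so the very hypothesis that $W^{c,\mathcal{P}}$ exists (and fails to be of type $(1,1)$) forces $\dim\bar{N}\geq 4$.

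With such a realization in hand, the hypotheses of Theorem~\ref{Weyl_bar} are met verbatim, and the theorem concludes that $W^{c,\mathcal{P}_{\bar{\nabla}^{\prime}}}$ is of type $(1,1)$. Since $\mathcal{P}_{\bar{\nabla}^{\prime}}=\mathcal{P}$, this says that $W^{c,\mathcal{P}}$ is of type $(1,1)$, contradicting the standing hypothesis that $W^{c,\mathcal{P}}$ is \emph{not} of type $(1,1)$. Hence no such realization can exist, which is exactly the assertion of the corollary. I do not expect any genuine obstacle: the content is purely logical, and the only point demanding care is the bookkeeping of definitions, namely verifying that ``being realized as a projective special complex manifold whose canonical c-projective structure is $\mathcal{P}$'' is literally the hypothesis of Theorem~\ref{Weyl_bar} together with the identification $\mathcal{P}=\mathcal{P}_{\bar{\nabla}^{\prime}}$, so that the type-$(1,1)$ conclusion transfers to $\mathcal{P}$ without further argument.
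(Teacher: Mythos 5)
Your proposal is correct and is exactly the paper's argument: the paper offers no separate proof, stating only that the corollary is a direct consequence of Theorem~\ref{Weyl_bar}, i.e.\ its contrapositive, which is precisely your contradiction argument. Your observation that the mere existence of $W^{c,\mathcal{P}}$ forces $\dim\bar{N}\geq 4$ (since the Rho tensor involves $1/(m-1)$) is a sensible piece of bookkeeping that the paper leaves implicit.
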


\section{A generalization of the supergravity c-map}
\setcounter{equation}{0}

The supergravity c-map associates a (pseudo-)quternionic K{\"a}hler manifold   
with any projective special K{\"a}hler manifold. In this section, we give 
a generalization of the supergravity c-map by using the results in previous sections. 
Let $(N,J,\nabla, \xi)$ be a conical special complex manifold and set $Z:=J \xi$.

\begin{lemma}\label{rotation_eq}
$2Z^{h_{\nabla}}$ is a rotating vector field on $TN$. 
\end{lemma}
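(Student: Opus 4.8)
The plan is to unwind the definition of \emph{rotating} (Definition \ref{rot:def}) for the vector field $2Z^{h_\nabla}$. Since the Lie derivative is linear in the differentiating vector field, the two required conditions $L_{2Z^{h_\nabla}}I_1 = 0$ and $L_{2Z^{h_\nabla}}I_2 = -2I_3$ are equivalent to
\[ L_{Z^{h_\nabla}}I_1 = 0, \qquad L_{Z^{h_\nabla}}I_2 = -I_3. \]
I would verify each of these by evaluating the tensor $L_{Z^{h_\nabla}}I_\alpha$ on the two types of generators $X^{h_\nabla}$ and $Y^v$ (with $X,Y\in\Gamma(TN)$), using the identity $(L_W I)(U) = [W,IU]-I[W,U]$ together with the explicit formulas (\ref{I_1})--(\ref{I_3}) and the bracket relations (\ref{bracket}). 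Because $\nabla$ is flat, (\ref{bracket}) gives $[Z^{h_\nabla},X^{h_\nabla}] = [Z,X]^{h_\nabla}$ and $[Z^{h_\nabla},X^v]=(\nabla_Z X)^v$, so every bracket collapses to an operation on $N$.

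For $I_1$ a direct computation along these lines yields
\[ (L_{Z^{h_\nabla}}I_1)(X^{h_\nabla}) = ((L_Z J)X)^{h_\nabla},\qquad (L_{Z^{h_\nabla}}I_1)(Y^v) = -((\nabla_Z J)Y)^v. \]
Both right-hand sides vanish by Lemma \ref{sc:lem}: since $Z = J\xi$, the first equals $(L_{J\xi}J)X = 0$ and the second equals $-(A_{J\xi}Y)^v = 0$. This settles $L_{Z^{h_\nabla}}I_1 = 0$.

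For $I_2$ the same bookkeeping, after using that $\nabla$ is torsion-free, reduces everything to the single quantity $\nabla_X Z$. The crucial identity is $\nabla_X Z = JX$: writing $Z = J\xi$ and applying the Leibniz rule gives $\nabla_X(J\xi) = (\nabla_X J)\xi + J\nabla_X \xi = A_X\xi + JX$, where $\nabla_X\xi = X$ because $\nabla\xi=\mathrm{id}$, and $A_X\xi = A_\xi X = 0$ by the symmetry of $A=\nabla J$ together with $A_\xi = \nabla_\xi J = 0$ (Definition \ref{scm:def}). With $\nabla_X Z = JX$ in hand one obtains
\[ (L_{Z^{h_\nabla}}I_2)(X^{h_\nabla}) = -(JX)^v = -I_3 X^{h_\nabla},\qquad (L_{Z^{h_\nabla}}I_2)(Y^v) = -(JY)^{h_\nabla} = -I_3 Y^v, \]
which is precisely $L_{Z^{h_\nabla}}I_2 = -I_3$, completing the argument.

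I expect the main obstacle to be organizational rather than conceptual: keeping the horizontal/vertical decomposition and the signs in (\ref{I_1})--(\ref{I_3}) straight while recognizing that every contribution funnels into the three facts $\nabla\xi=\mathrm{id}$, $A_\xi = 0$, and the symmetry of $A$. Once the identity $\nabla_X(J\xi)=JX$ is isolated, the remaining verifications are mechanical.
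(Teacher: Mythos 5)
Your proposal is correct and follows essentially the same route as the paper: reduce to $L_{Z^{h_\nabla}}I_1=0$ and $L_{Z^{h_\nabla}}I_2=-I_3$, handle $I_1$ via $L_ZJ=0$ and $\nabla_ZJ=0$ from Lemma \ref{sc:lem}, and compute $L_{Z^{h_\nabla}}I_2$ on horizontal and vertical lifts using (\ref{bracket}). Your only addition is to spell out the identity $\nabla_X Z=\nabla_X(J\xi)=A_X\xi+JX=JX$, which the paper uses implicitly (via torsion-freeness, $[Z,Y]-\nabla_ZY=-\nabla_YZ=-JY$) in the displayed chain of equalities.
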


\begin{proof}
Since $L_{Z}J=0$ and 
$\nabla_Z J=0$ (cf.\ Lemma \ref{sc:lem}), 
we have $L_{Z^{h_{\nabla}}} I_{1}=0$. Moreover
we have
\begin{align*}
(L_{Z^{h_{\nabla}}}I_{2})(X^{h_{\nabla}}+Y^{v}) 
&=[Z,Y]^{h_{\nabla}}-(\nabla_{Z} X)^{v}-(\nabla_{Z} Y)^{h_{\nabla}}+[Z,X]^{v} \\
&=-(\nabla_{Y}Z)^{h_{\nabla}}-(\nabla_{X}Z)^{v} \\
&=-(JY)^{h_{\nabla}}-(JX)^{v} \\
&=-I_{3}(X^{h_{\nabla}}+Y^{v})
\end{align*}
for all  $X$, $Y \in \Gamma(TN)$. 
\end{proof}

\begin{remark}
{\rm 
By the equations for ${\tilde{\nabla}^{0}}$ in the proof of Theorem~\ref{ricci_flat}, we have  
\begin{align*}
{\tilde{\nabla}^{0}}_{X^{h_{\nabla}}} \xi^{h_{\nabla}}
&=(\nabla^{\prime}_{X} \xi)^{h_{\nabla}}
=\left( \nabla_{X} \xi -\frac{1}{2} JA_{X} \xi \right) ^{h_{\nabla}}
=X^{h_{\nabla}},\\
{\tilde{\nabla}^{0}}_{X^{v}} \xi^{h_{\nabla}}
&=-\frac{1}{2} (J A_{\xi} X)^{v}=0
\end{align*}
for $X \in TN$, when $(N,J,\nabla , \xi)$ is a conical special complex manifold.   
}
\end{remark}



We have the following theorem.

\begin{theorem}[Generalized supergravity c-map]\label{gene_sup_cmap}
Let $(N,J,\nabla,\xi)$ be a $2(n+1)$-dimensional conical special complex manifold. 
Let $\Theta$ be a closed two-form on $M=TN$ such that $L_{Z^{M}} \Theta=0$, 
where $Z^{M}=2 Z^{h_{\nabla}}$.  
Consider a $\mathrm{U}(1)$-bundle 
$\pi : P \to M$ over $M$ 
and $\eta$ a connection form whose curvature form is 
\[ d \eta
   =\pi^{\ast} \left( \Theta-\frac{1}{2} d ((\iota_{Z^{M}} \Theta) \circ I_{1}) \right). \]
Let $f$ be a smooth function on $M$ such that $df = -\iota_{Z^{M}} \Theta$ and 
$f_{1}:=f-(1/2)\Theta(Z^{M},I_{1}Z^{M})$ does nowhere vanish. 
If $\tilde{\pi}:\tilde{M} \to \hat{M}$ and $\hat{\pi}:\hat{M} \to \bar{M}$ are submersions, 
we have an assignment from a $2n$-dimensional projective special complex manifold 
$(\bar{N},\bar{J}, \mathcal{P}_{\bar{\nabla}^{\prime}})$ 
whose c-projective Weyl curvature is of type $(1,1)$ 
to a $4(n+1)$-dimensional quaternionic manifold 
\[ \bar{M}(=\overline{TN})=\mathcal{C}_{(P,\eta)}(M, \langle I_{1},I_{2},I_{3} \rangle,Z^{M}, f, 
\Theta) 
/ \mathcal{D} \] 
foliated by $(2n+4)$-dimensional leaves 
such that $\bar{N}$ coincides with the space of its leaves.  
\end{theorem}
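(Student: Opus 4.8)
The plan is to assemble the theorem from the constructions of the previous sections and then identify the promised foliation by hand. First I would record that the hypotheses make $M=TN$ a hypercomplex manifold: by Theorem~\ref{ricci_flat} the canonical triple $(I_1,I_2,I_3)$ of \eqref{I_1}--\eqref{I_3} is integrable with Ricci-flat Obata connection, and by Lemma~\ref{rotation_eq} the vector field $Z^M=2Z^{h_\nabla}$ (with $Z=J\xi$) is rotating. Together with the closed two-form $\Theta$ satisfying $L_{Z^M}\Theta=0$, the function $f$ with $df=-\iota_{Z^M}\Theta$ and nowhere-vanishing $f_1$, and the $\mathrm{U}(1)$-bundle $(P,\eta)$ whose curvature is the prescribed one, these are exactly the data entering the H/Q-correspondence. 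Since $\tilde\pi$ and $\hat\pi$ are assumed to be submersions, Theorem~\ref{gene_hq_corresp} yields a quaternionic manifold $\bar M$ with $\dim\bar M=\dim M=4(n+1)$.

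Next I would address the source datum. The quotient $\bar N=N/\langle\xi,J\xi\rangle$ is, by hypothesis, a projective special complex manifold, and Proposition~\ref{projective_str} equips it with the canonical c-projective structure $\mathcal P_{\bar\nabla'}$; by Theorem~\ref{Weyl_bar} (using $\dim\bar N=2n\ge 4$) its c-projective Weyl curvature is of type $(1,1)$. This identifies the input of the assignment as a projective special complex manifold $(\bar N,\bar J,\mathcal P_{\bar\nabla'})$ of the stated Weyl type.

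The heart of the proof is the construction of the foliation and the identification of its leaf space with $\bar N$. Here I would introduce the composite surjective submersion
\[ q:\tilde M=\mathbb H^*\times P \xrightarrow{\ \mathrm{pr}_2\ } P \xrightarrow{\ \pi\ } M=TN \xrightarrow{\ \tau\ } N \xrightarrow{\ p_N\ } \bar N, \]
where $\tau$ is the bundle projection and $p_N$ the $\mathbb C^*$-quotient. The key observation is that $V_1=e_1^L-Z_1$ is tangent to the fibers of $q$: the summand $e_1^L$ and the fundamental field $X_P$ lie in the $\mathbb H^*$- and $\mathrm{U}(1)$-directions killed by $q$, while $\pi_*Z^{h_\eta}=Z^M$ projects under $\tau$ to $2J\xi$, which is tangent to the $\mathbb C^*$-orbits and hence annihilated by $p_N$. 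Thus $q$ descends to a submersion $\hat q:\hat M\to\bar N$. Similarly, $\mathcal D=\langle V,\hat I_1V,\hat I_2V,\hat I_3V\rangle$ is tangent to the fibers of $\hat q$, because $V=\tilde\pi_*e_0^R$ and $\hat I_\alpha V=\tilde\pi_*e_\alpha^R$ are images of vector fields on the $\mathbb H^*$-factor, again killed by $q$. Therefore $\hat q$ descends to a submersion $\bar q:\bar M=\hat M/\mathcal D\to\bar N$, whose fibers are the leaves of the desired foliation. A dimension count gives leaves of dimension $\dim\bar M-\dim\bar N=(4n+4)-2n=2n+4$, and since the fibers of $q$ are connected (being built from the connected factors $\mathbb H^*$, $\mathrm{U}(1)$, $T_xN$ and the $\mathbb C^*$-orbits), so are the leaves, whence the leaf space coincides with $\bar N$.

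I expect the main obstacle to be precisely the bookkeeping in this last step: one must verify that the two successive quotients by $\langle V_1\rangle$ and by $\mathcal D$ are compatible with $q$, i.e.\ that both leaf-generating distributions are everywhere tangent to the $q$-fibers, so that $\bar q$ is well defined and a submersion. Once this is checked, the remaining assertions---dimensions, connectedness of leaves, and the coincidence of the leaf space with $\bar N$---follow formally; alternatively, under the stronger hypotheses of Theorem~\ref{swan_twist} the same foliation can be read off from the twist description $\bar M=P/\langle Z_1\rangle$ together with the fibration $TN\to\bar N$, which provides a useful consistency check.
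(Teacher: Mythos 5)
Your proposal is correct, and its first two steps coincide exactly with the paper's proof: Theorem \ref{ricci_flat}, Lemma \ref{rotation_eq} and Theorem \ref{gene_hq_corresp} produce the $4(n+1)$-dimensional quaternionic manifold $\bar{M}$, while Proposition \ref{projective_str} and Theorem \ref{Weyl_bar} identify the source as $(\bar{N},\bar{J},\mathcal{P}_{\bar{\nabla}^{\prime}})$ with c-projective Weyl curvature of type $(1,1)$. Where you genuinely diverge is the foliation step. The paper works leafwise: it introduces the integrable, $Z^{M}$-invariant distribution $\mathcal{L}=\mathcal{V}\oplus\langle \xi^{h_{\nabla}}, Z^{h_{\nabla}}\rangle$ on $TN$, pulls back $P$ over each leaf $L$, checks that $V_{1}$ is tangent to $\tilde{L}=\mathbb{H}^{\ast}\times\iota^{\#}P$ and that $\mathcal{D}$ is tangent to $\hat{L}=\tilde{L}/\langle V_{1}\rangle$, and thus constructs the leaves $\bar{L}\subset\bar{M}$ one at a time. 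You instead descend the composite submersion $q=p_{N}\circ\tau\circ\pi\circ\mathrm{pr}_{2}$ (your $\tau$ is the paper's $\pi_{TN}$) through both quotients to obtain a single global submersion $\bar{q}:\bar{M}\to\bar{N}$ and take its fibers. These are dual formulations of the same geometry: your tangency computations ($q_{\ast}V_{1}=0$ since $\tau_{\ast}\pi_{\ast}Z_{1}=2J\xi$, and $q_{\ast}$ kills $\tilde{\pi}_{\ast}e_{a}^{R}$) are exactly the paper's tangency checks, and your kernel $\mathrm{Ker}\,(p_{N}\circ\tau)_{\ast}$ is precisely the paper's $\mathcal{L}$. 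Your packaging buys the leaf-space identification, the submersion property and the dimension count $\dim\bar{M}-\dim\bar{N}=2n+4$ essentially for free, at the cost of the connectedness and surjectivity verifications you correctly supply; the paper's leafwise route avoids those but, more importantly, exhibits each leaf as itself arising from the twist/conification machinery, a feature the paper exploits afterwards (in the final corollary the leaves $\hat{L}$ inherit complex structures via Swann's twist). For the theorem as stated, both arguments are complete; only your small notational slip $\pi_{\ast}Z^{h_{\eta}}$ (which should read $\pi_{\ast}(Z^{M})^{h_{\eta}}=Z^{M}$) needs correcting.
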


\begin{proof}
By Theorem \ref{gene_hq_corresp}, Lemma \ref{rotation_eq} 
and Proposition \ref{projective_str}, we have an assignment from a $2n$-dimensional projective special complex manifold 
$(\bar{N},\bar{J}, \mathcal{P}_{\bar{\nabla}^{\prime}})$ 
to a $4(n+1)$-dimensional quaternionic manifold 
$\overline{TN}$. By virtue of Theorem \ref{Weyl_bar}, the c-projective Weyl curvature of 
$\mathcal{P}_{\bar{\nabla}^{\prime}}$
is of type $(1,1)$. 
Next we give a foliation on $\overline{TN}$ whose leaves space is $\bar{N}$. 
Set $\mathcal{L}:=\mathcal{V} \oplus \langle \xi^{h_{\nabla}}, Z^{h_{\nabla}} \rangle$, where $\mathcal{V}$ 
is the vertical distribution of $T(TN) \to TN$. The distribution $\mathcal{L}$ is 
$Z^{M}=2Z^{h_{\nabla}}$-invariant and integrable by (\ref{bracket}). 
Therefore each leaf $L$ of $\mathcal{L}$ 
is a $Z^{M}=2Z^{h_{\nabla}}$-invariant submanifold of $TN$. 
Consider the pull-back $\iota^{\#}P$ of $P$ by the inclusion $\iota:L \to TN$ 
with the bundle map $\iota_{\#}:\iota^{\#}P \to P$
and $\tilde{L}:=\mathbb{H}^{\ast} \times \iota^{\#}P$. 
Since $V_{1}$ is tangent to $\tilde{L}$, then  $\hat{L}:=\tilde{L}/\langle V_{1} \rangle$ 
is a submanifold $\hat{M}$. Moreover $V$, $\hat{I}_{1}(V)$, $\hat{I}_{2}(V)$, $\hat{I}_{3}(V)$ 
are tangent to  $\hat{L}$ because $V$ is induced by $e_{0}^{R}$.  
Taking the quotient again, we obtain a submanifold  
$\bar{L}:=\hat{L}/ \langle V, \hat{I}_{1}(V), \hat{I}_{2}(V), \hat{I}_{3}(V) \rangle$  
on a quaternionic manifold $\overline{TN}$. 
Therefore the quaternionic manifold $\overline{TN}$ is 
foliated by $(2n+4)$-dimensional leaves 
such that the space of its leaves $\bar{L}$ is the projective special complex manifold $\bar{N}$. 
\end{proof}

\begin{remark}
{\rm
If we assume that $Z_{1}=(Z^{M})^{h_{\eta}}+f_{1}X_{P}$ generates 
a free $\mathrm{U}(1)$-action on $P$ instead of assuming that  
$\tilde{\pi}:\tilde{M} \to \hat{M}$ and $\hat{\pi}:\hat{M} \to \bar{M}$ 
are submersions, 
we obtain the same result as in Theorem \ref{gene_sup_cmap} (see Theorem \ref{swan_twist}). 
}
\end{remark}

\begin{remark}\label{BC:rem}
{\rm Bor{\' o}wka and Calderbank have given a construction of a quaternionic manifold from a complex manifold of half 
the dimension with a c-projective structure, known as the quaternionic Feix-Kaledin construction \cite{BC}. 
Their construction generalizes the original construction \cite{F,K}, which yields a hyper-K\"ahler structure 
on a neighborhood of the zero setion of any K\"ahler manifold. 
They also point out that 
this construction is a generalization of \cite[Theorem A]{F2} (see \cite[Proposition 5.4]{BC}). 
More precisely, the initial data of the quaternionic Feix-Kaledin construction are 
a complex manifold with a c-projective structure of type $(1,1)$ and 
a complex line bundle with a connection of type $(1,1)$. Note that
this construction is different from our generalization of the supergravity c-map, in which  
the real dimension of the quaternionic manifold $\overline{TN}$ is related to the real dimension of the projective special complex manifold $\bar N$ by 
$\dim (\overline{TN}) = 2\dim (\bar{N}) + 4$.   
}
\end{remark}

We consider a conical special complex manifold $(N,J,\nabla,\xi)$, which we endow now 
with an additional structure. Let $\psi$ be a $J$-hermitian, $\nabla$-parallel two-form on 
$(N,J,\nabla,\xi)$. 
We consider a function $\mu=(1/2) \psi(\xi,J \xi)$ on $N$. Then we see 
$d \mu=-\iota_{Z} \psi$. 
Set 
\begin{align}
\Theta &=-\pi_{TN}^{\ast} \psi \label{a1},\\
f &=-2\pi_{TN}^{\ast} \mu +c \label{a2}
\end{align}
for some constant $c$. 
Then it holds that 
\[ df=-\iota_{Z^{M}} \Theta, \, 
f_{1}=f-\frac{1}{2} \Theta(Z^{M},I_{1}Z^{M})=2 \pi_{TN}^{\ast} \mu +c, \]
where $\pi_{TN}:TN \to N$ is the bundle projection.
In fact, we have 
\begin{align*}
df &= -2d  (\pi_{TN}^{\ast} \mu) 
    = 2 \pi_{TN}^{\ast} (\iota_{Z} \psi) 
    = - \iota_{Z^{M}} \Theta
\end{align*}
and 
\begin{align*}
f_{1}&=f-\frac{1}{2} \Theta(Z^{M},I_{1}Z^{M}) \\
     &=-\psi(\xi,J\xi) \circ \pi_{TN}-2 \Theta(Z^{h_{\nabla}},I_{1}Z^{h_{\nabla}})+c\\
     &=\psi(J\xi,\xi)\circ \pi_{TN} +c 
     =2 \pi_{TN}^{\ast} \mu +c.
\end{align*}

\begin{corollary}\label{with_two_form}
Let $(N,J,\nabla,\xi)$ be a $2n$-dimensional conical special complex 
manifold and 
$\psi$ a $J$-hermitian, $\nabla$-parallel two-form on $N$. 
Consider a $\mathrm{U}(1)$-bundle 
$\pi : P \to M$ over $M=TN$ 
and $\eta$ a connection form whose curvature form is 
\[ d \eta
   =(\pi_{TN} \circ \pi)^{\ast} \psi.  \]
If 
$\tilde{\pi}:\tilde{M} \to \hat{M}$ and $\hat{\pi}:\hat{M} \to \bar{M}$ 
are submersions and 
$\mu^{-1}(-c/2) =\emptyset$, then the generalized supergravity c-map of Theorem \ref{gene_sup_cmap} can be specialized to this setting such that the data $\Theta$ and $f$ are related to $\psi$ by equations (\ref{a1}) and (\ref{a2}). 
\end{corollary}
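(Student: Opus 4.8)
The plan is to verify that the proposed data $(\Theta, f)$ satisfy all the hypotheses of Theorem~\ref{gene_sup_cmap}, so that the conclusion follows by direct specialization. The corollary is essentially a bookkeeping statement: it asserts that a particular geometrically natural choice of closed two-form arising from a $J$-hermitian, $\nabla$-parallel two-form $\psi$ on $N$ produces valid input for the generalized supergravity c-map. Since the computations showing $df = -\iota_{Z^M}\Theta$ and $f_1 = 2\pi_{TN}^*\mu + c$ are already carried out in the excerpt immediately preceding the corollary, the bulk of the work is done; what remains is to assemble the pieces into the hypotheses of the theorem.

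First I would check that $\Theta = -\pi_{TN}^*\psi$ is closed: since $\psi$ is $\nabla$-parallel it is in particular closed on $N$, and pullback commutes with $d$, so $d\Theta = -\pi_{TN}^*(d\psi) = 0$. Next I would verify $L_{Z^M}\Theta = 0$. Because $Z = J\xi$ satisfies $L_Z J = 0$ (Lemma~\ref{sc:lem}) and $\psi$ is $\nabla$-parallel with $L_Z\nabla=0$ (Lemma~\ref{der_conn_prime} applied appropriately), $\psi$ is $Z$-invariant on $N$; the horizontal lift $Z^M = 2Z^{h_\nabla}$ is $\pi_{TN}$-related to $Z$, so $L_{Z^M}\pi_{TN}^*\psi = \pi_{TN}^*(L_Z\psi) = 0$. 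Then I would observe that the curvature prescription of Theorem~\ref{gene_sup_cmap} reduces to the stated $d\eta = (\pi_{TN}\circ\pi)^*\psi$: since $\iota_{Z^M}\Theta = -df$ is exact, the correction term $-\tfrac12 d((\iota_{Z^M}\Theta)\circ I_1)$ is itself exact, and one checks using $(\iota_{Z^M}\Theta)\circ I_1 = df\circ I_1$ together with the structure equations that the full curvature two-form simplifies to $-\pi_{TN}^*\psi$ pulled back to $P$, matching the claimed $d\eta$.

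The remaining hypotheses are handled directly from the preceding computation and the added assumption. The smooth function $f = -2\pi_{TN}^*\mu + c$ satisfies $df = -\iota_{Z^M}\Theta$ as computed, and the expression $f_1 = 2\pi_{TN}^*\mu + c$ is nowhere vanishing precisely when $\mu$ never equals $-c/2$, i.e.\ $\mu^{-1}(-c/2) = \emptyset$, which is the hypothesis imposed. With the submersion assumptions on $\tilde\pi$ and $\hat\pi$ carried over verbatim, all conditions of Theorem~\ref{gene_sup_cmap} are met, and the conclusion of that theorem applies with the specified $\Theta$ and $f$.

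The main obstacle I anticipate is the careful verification that the curvature correction term vanishes at the level of $d\eta$, i.e.\ showing that $-\tfrac12 d((\iota_{Z^M}\Theta)\circ I_1)$ either vanishes or is absorbed so that $d\eta = (\pi_{TN}\circ\pi)^*\psi$. This requires evaluating $(\iota_{Z^M}\Theta)\circ I_1$ explicitly; using $\iota_{Z^M}\Theta = -df$ with $f = -2\pi_{TN}^*\mu + c$ and the relation $d\mu = -\iota_Z\psi$ on $N$, one must compute $df \circ I_1$ on horizontal and vertical lifts and confirm its exterior derivative reconstitutes exactly the discrepancy between $\Theta = -\pi_{TN}^*\psi$ and $(\pi_{TN}\circ\pi)^*\psi$. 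This is a routine but slightly delicate computation relying on the explicit form of $I_1$ from (\ref{I_1}) and the bracket relations (\ref{bracket}); everything else is immediate specialization.
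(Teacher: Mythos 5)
Your overall plan is the same as the paper's: verify the hypotheses of Theorem \ref{gene_sup_cmap} for the data (\ref{a1})--(\ref{a2}), and the bookkeeping parts (closedness of $\Theta$, the equivalence of the nonvanishing of $f_{1}=2\pi_{TN}^{\ast}\mu +c$ with $\mu^{-1}(-c/2)=\emptyset$, carrying over the submersion hypotheses) are handled correctly. However, the one substantive step --- showing that $\Theta-\tfrac{1}{2}d((\iota_{Z^{M}}\Theta)\circ I_{1})$ equals $\pi_{TN}^{\ast}\psi$, so that its pullback to $P$ is the prescribed $d\eta=(\pi_{TN}\circ\pi)^{\ast}\psi$ --- is exactly what you defer as the ``main obstacle,'' and it is the entire content of the paper's proof. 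The paper's key identity is $d((\iota_{Z}\psi)\circ J)=2\psi$ on $N$, which has a one-line verification missing from your proposal: by $J$-hermicity, $((\iota_{Z}\psi)\circ J)(X)=\psi(J\xi,JX)=\psi(\xi,X)$, so $(\iota_{Z}\psi)\circ J=\iota_{\xi}\psi$, and then $d\iota_{\xi}\psi=L_{\xi}\psi=2\psi$ because $\nabla\psi=0$ and $\nabla\xi=\mathrm{id}$. From this, $(\iota_{Z^{M}}\Theta)\circ I_{1}=-\pi_{TN}^{\ast}((\iota_{2Z}\psi)\circ J)=-2\,\pi_{TN}^{\ast}\iota_{\xi}\psi$, so the correction term contributes $+2\,\pi_{TN}^{\ast}\psi$ and $\Theta-\tfrac{1}{2}d((\iota_{Z^{M}}\Theta)\circ I_{1})=-\pi_{TN}^{\ast}\psi+2\,\pi_{TN}^{\ast}\psi=+\pi_{TN}^{\ast}\psi$. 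In particular your intermediate assertion that the full curvature ``simplifies to $-\pi_{TN}^{\ast}\psi$ pulled back to $P$'' has the wrong sign and contradicts the corollary's statement (and your own final paragraph, which correctly describes the discrepancy between $\Theta=-\pi_{TN}^{\ast}\psi$ and $+(\pi_{TN}\circ\pi)^{\ast}\psi$ that must be reconstituted). Note also that observing the correction term is exact is vacuous --- it is an exterior derivative by construction; what must be proved is that it equals $2\,\pi_{TN}^{\ast}\psi$ on the nose.

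A second, smaller flaw: your justification of $L_{Z^{M}}\Theta=0$ appeals to ``$L_{Z}\nabla=0$ (Lemma~\ref{der_conn_prime})'', but that lemma concerns the modified connection $\nabla^{\prime}$, not the special connection $\nabla$; in fact $L_{J\xi}\nabla=A$ by Lemma~\ref{a1_der}, which is nonzero in all the nontrivial examples. The invariance nevertheless holds, and the paper's (and quickest) route is Cartan's formula: $d\psi=0$ (since $\psi$ is parallel for a torsion-free connection) together with $\iota_{Z}\psi=-d\mu$ gives $L_{Z}\psi=d\iota_{Z}\psi+\iota_{Z}d\psi=0$, and $Z^{M}=2Z^{h_{\nabla}}$ is $\pi_{TN}$-related to $2Z$, whence $L_{Z^{M}}\Theta=0$. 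So this step is a fixable misattribution; the genuine gap is the unproved curvature identity together with the sign error in your sketch of what it should yield.
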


\begin{proof}
By a straightforward calculation, we have $d ((\iota_{Z} \psi) \circ J)= 2 \psi$.
Then it is easy to check 
\begin{align*}
 d \eta &= (\pi_{TN} \circ \pi)^{\ast} \psi \\
           &=(\pi_{TN} \circ \pi)^{\ast}
   \left( -\psi + d ((\iota_{Z} \psi) \circ J) \right) \\
   &=(\pi_{TN} \circ \pi)^{\ast}
   \left( -\psi + \frac{1}{2}d ((\iota_{2Z} \psi) \circ J) \right) \\
   &=\pi^{\ast} \left( \Theta-\frac{1}{2} d ((\iota_{Z^{M}} \Theta) \circ I_{1}) \right), 
\end{align*}
where $\Theta$ is the two-form given by (\ref{a1}). 
Since $d \psi=0$ and $\iota_{Z} \psi=-d \mu$, it holds $L_{Z^{M}} \Theta=0$. 
The function $f_{1}=f-(1/2)\Theta(Z^{M},I_{1}Z^{M})$
does nowhere vanish by $\mu^{-1}(-c/2) =\emptyset$. 
Therefore Theorem \ref{gene_sup_cmap} leads to the conclusion. 
\end{proof}

Therefore a conical special complex manifold $(N,J,\nabla,\xi)$ with 
a $J$-hermitian, $\nabla$-parallel two-form $\psi$ such that 
$(1/2 \pi) [\psi] \in H^{2}_{DR}(N,\mathbb{Z})$ and $\mu=(1/2) \psi(\xi,J \xi)$ is not surjective  
gives rise to a quaternionic manifold of dimension $2 \dim N$
under a suitable choice of the constant $c$.

For $t \in \mathbb{R}/\pi \mathbb{Z}$, 
we define a connection $\nabla^{t}$ by
$\nabla^{t}=e^{tJ} \circ \nabla \circ e^{-tJ}$, 
which is a special complex connection 
by \cite[Proposition 1]{ACD}. 
Moreover, by 
\begin{equation*}\label{nablat:eq}\nabla^{t}=\nabla - (\sin t)
e^{tJ}(\nabla J)\end{equation*} 
(\cite[Lemma 1]{ACD}), 
we see that $\nabla^{t}$ satisfies $\nabla^{t} \xi=\mathrm{id}$. 
Therefore $\{ \nabla^{t} \}_{t \in \mathbb{R}/\pi \mathbb{Z}}$
is a family of conical special complex connections if $\nabla J \neq 0$. 

\begin{lemma}\label{nabla_t_para}
If $\psi$ is $J$-hermitian and  $\nabla$-parallel,  then 
$\psi$ is $\nabla^{t}$-parallel. 
\end{lemma}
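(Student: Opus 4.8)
The plan is to use the explicit description $\nabla^{t}=\nabla-(\sin t)\,e^{tJ}(\nabla J)=\nabla-(\sin t)\,e^{tJ}A$ quoted just above the lemma, so that for each $X$ the difference tensor is $S^{t}_{X}:=\nabla^{t}_{X}-\nabla_{X}=-(\sin t)\,e^{tJ}A_{X}$. Since $\psi$ is $\nabla$-parallel, for any $Y,Z$ one gets
\[
(\nabla^{t}_{X}\psi)(Y,Z)=-\psi(S^{t}_{X}Y,Z)-\psi(Y,S^{t}_{X}Z)=(\sin t)\bigl[\psi(e^{tJ}A_{X}Y,Z)+\psi(Y,e^{tJ}A_{X}Z)\bigr].
\]
Thus the whole lemma reduces to the purely algebraic statement that, for every $X$, the endomorphism $e^{tJ}A_{X}$ is skew-symmetric with respect to $\psi$.

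First I would record three structural facts. (i) Since $\nabla J^{2}=0$, the tensor $A=\nabla J$ anticommutes with $J$, i.e.\ $A_{X}J=-JA_{X}$; in particular $A_{X}e^{-tJ}=e^{tJ}A_{X}$. (ii) Because $\psi$ is skew-symmetric and $J$-hermitian, $J$ is $\psi$-skew, $\psi(JU,W)=-\psi(U,JW)$, and more generally $e^{tJ}$ is a $\psi$-isometry, $\psi(e^{tJ}U,e^{tJ}W)=\psi(U,W)$. (iii) Covariantly differentiating the hermiticity identity $\psi(J\,\cdot\,,J\,\cdot\,)=\psi$ and using $\nabla\psi=0$ yields
\[
\psi(A_{X}Y,JZ)+\psi(JY,A_{X}Z)=0.
\]

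The key step is to upgrade (iii) to the skew-symmetry of $A_{X}$ itself, namely $\psi(A_{X}Y,Z)+\psi(Y,A_{X}Z)=0$. Applying the $\psi$-skewness of $J$ from (ii) to both terms of (iii) first gives that $JA_{X}$ is $\psi$-skew; combining this with the anticommutation from (i) (equivalently, substituting the identity $A_{X}=JA_{X}J$, which follows from (i), and then using $J$-skewness twice together with (iii)) forces $A_{X}$ to be $\psi$-skew. This is the place where the interaction between the differentiated hermiticity relation and the anticommutation of $A$ with $J$ is essential, and I expect it to be the only non-routine point of the argument.

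Finally, with $A_{X}$ shown to be $\psi$-skew, I would compute, using the $\psi$-isometry property of $e^{tJ}$ and then $A_{X}e^{-tJ}=e^{tJ}A_{X}$ from (i),
\[
\psi(Y,e^{tJ}A_{X}Z)=\psi(e^{-tJ}Y,A_{X}Z)=-\psi(A_{X}e^{-tJ}Y,Z)=-\psi(e^{tJ}A_{X}Y,Z),
\]
so that $e^{tJ}A_{X}$ is indeed $\psi$-skew. Substituting back into the displayed expression for $(\nabla^{t}_{X}\psi)(Y,Z)$ shows it vanishes for all $X,Y,Z$, i.e.\ $\psi$ is $\nabla^{t}$-parallel.
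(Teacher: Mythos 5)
Your proof is correct and takes essentially the same route as the paper's: both reduce the claim, via $\nabla^{t}-\nabla=-(\sin t)\,e^{tJ}(\nabla J)$, to the $\psi$-skewness of $A_{X}=\nabla_{X}J$, obtained by differentiating the hermiticity of $\psi$ (using $\nabla\psi=0$) and exploiting the anticommutation $A_{X}J=-JA_{X}$. The paper's argument is just a compressed version of yours, phrased via the derivation action ($J\cdot\psi=0$, $\nabla\psi=0$, hence $(\nabla_{X}J)\cdot\psi=0$); your explicit computations --- the upgrade of the mixed identity to skewness of $A_{X}$ and the $\psi$-isometry of $e^{tJ}$ --- supply precisely the details the paper leaves to the reader.
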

 
\begin{proof}
Since 
$\nabla^t-\nabla$ is a linear combination of $\nabla J$ and $J(\nabla J)=-(\nabla J)J$,  
it suffices to remark that $\nabla\psi=0$, $J\cdot \psi=0$ and, hence, $(\nabla_X J)\cdot \psi=0$ for all $X$. 
Here the dot stands for the action on the tensor algebra by derivations. 
\end{proof} 
 
Hence, Corollary \ref{with_two_form} and Lemma \ref{nabla_t_para} 
imply 

\begin{corollary}\label{two_form_onepara}
If $A(=\nabla J) \neq 0$,
there exists 
an $(\mathbb{R}/\pi \mathbb{Z})$-family 
of quaternionic manifolds obtained from 
a conical special complex manifold with $\psi$ 
under the same assumptions 
of Corollary \ref{with_two_form} by the H/Q-correspondence 
{\rm (}for any chosen function $f$ in the construction{\rm )}.
\end{corollary}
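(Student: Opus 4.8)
The plan is to reduce Corollary~\ref{two_form_onepara} to an application of Corollary~\ref{with_two_form} for each member of the family of special connections $\nabla^{t}$, $t \in \mathbb{R}/\pi\mathbb{Z}$. First I would observe that, by the discussion immediately preceding the statement, whenever $A = \nabla J \neq 0$ the connection $\nabla^{t} = e^{tJ}\circ \nabla \circ e^{-tJ}$ is a special complex connection (by \cite[Proposition 1]{ACD}) satisfying $\nabla^{t}\xi = \mathrm{id}$ (by \cite[Lemma 1]{ACD} and the formula $\nabla^{t} = \nabla - (\sin t)\,e^{tJ}(\nabla J)$). Since $L_{\xi}J = 0$ is a condition on $J$ alone and is unchanged, each $(N,J,\nabla^{t},\xi)$ is again a conical special complex manifold.

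The key step is then to check that the additional datum $\psi$ remains admissible for every $\nabla^{t}$. Here I would invoke Lemma~\ref{nabla_t_para}, which guarantees that the $J$-hermitian form $\psi$, being $\nabla$-parallel, is also $\nabla^{t}$-parallel. Thus $(N,J,\nabla^{t},\xi)$ together with $\psi$ satisfies exactly the hypotheses of Corollary~\ref{with_two_form}: $\psi$ is $J$-hermitian and $\nabla^{t}$-parallel, so setting $\Theta = -\pi_{TN}^{\ast}\psi$ and $f = -2\pi_{TN}^{\ast}\mu + c$ (with $\mu = (1/2)\psi(\xi,J\xi)$) produces, for each $t$, the closed two-form and function required by the generalized supergravity c-map. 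I would note that the function $\mu$ and the moment-type identity $d\mu = -\iota_{Z}\psi$ depend only on $\psi$, $J$ and $\xi$, not on the choice of connection in the special family, so the nondegeneracy condition $\mu^{-1}(-c/2) = \emptyset$ and the submersion assumptions on $\tilde{\pi}$ and $\hat{\pi}$ carry over verbatim.

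Applying Corollary~\ref{with_two_form} to each $\nabla^{t}$ therefore yields, for every $t \in \mathbb{R}/\pi\mathbb{Z}$, a quaternionic manifold $\overline{TN}$ of dimension $2\dim N$ via the H/Q-correspondence, for any admissible choice of the function $f$ (equivalently, of the constant $c$). This produces the desired $(\mathbb{R}/\pi\mathbb{Z})$-family.

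I expect the only genuine subtlety to be confirming that the parametrizing interval is indeed $\mathbb{R}/\pi\mathbb{Z}$ rather than $\mathbb{R}/2\pi\mathbb{Z}$, and that distinct $t$ generically give genuinely distinct quaternionic structures rather than a trivial relabeling; the former is already fixed by the definition $\nabla^{t} = e^{tJ}\circ \nabla\circ e^{-tJ}$ appearing above, so the main obstacle is essentially bookkeeping rather than a new idea. Everything else is a direct citation of the preceding results, so no further computation is needed.
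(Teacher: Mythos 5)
Your proposal is correct and follows essentially the same route as the paper, which likewise invokes Lemma~\ref{nabla_t_para} to get $\nabla^{t}\psi=0$ and the preceding observation that each $(N,J,\nabla^{t},\xi)$ is a conical special complex manifold, then concludes directly by Corollary~\ref{with_two_form}. One small caution: while $\mu$ and the condition $\mu^{-1}(-c/2)=\emptyset$ are indeed $t$-independent, the submersion hypotheses on $\tilde{\pi}$ and $\hat{\pi}$ refer to quotients built from the $t$-dependent hypercomplex structure on $TN$ (via the horizontal lifts with respect to $\nabla^{t}$), so they must be imposed for each $t$ --- which is how the corollary's phrase ``under the same assumptions'' is to be read --- rather than carrying over verbatim as your second paragraph suggests.
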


\begin{proof}
By Lemma \ref{nabla_t_para}, $\nabla^{t}_{X} \psi=0$. 
Since $(N,J,\nabla^{t},\xi)$ are conical special complex manifolds, 
we have the conclusion. 
\end{proof}

To give an example, we recall 
the (local) characterization of 
a conical special complex manifold \cite{ACD}. 
Let $(\mathbb{C}^{n+1}, J)$ be the standard complex vector space and 
$U$ an open subset in $\mathbb{C}^{n+1}$ with the standard coordinate system $(z_{0},\dots,z_{n})$. 
We consider a holomorphic one-form $\alpha =\sum F_{i} dz_{i}$ on $U$, 
which is also viewed as a holomorphic map $\phi = \phi_\alpha$ 
from $U$ to $(T^{\ast}U=U \times \mathbb{C}^{n+1} \subset)$ $\mathbb{C}^{2(n+1)}$. 
If $\mathrm{Re} \, \phi : U \to \mathbb{R}^{2(n+1)}$ is an immersion, 
which is equivalent to $\phi$ being totally complex \cite{ACD}, then 
we can find an affine connection $\nabla$ such that $(U,J,\nabla)$ is 
a special complex manifold. In fact, we can take a local coordinate system 
\[ (x_{0}:=\mathrm{Re} \, z_{0},\dots, x_{n}:=\mathrm{Re} \, z_{n},
y_{0}:=\mathrm{Re} \, F_{0},\dots,y_{n}:=\mathrm{Re} \, F_{n}) \]
on $U$ induced by $\phi$ and a connection $\nabla$ defined by the condition that  
$(x_{0},\dots,x_{n},y_{0},\dots,y_{n})$ is affine. 
Moreover $\sum_{i=0}^{n} dx_{i} \wedge dy_{i}$ is $\nabla$-parallel symplectic form on $U$.  
In particular, if $\alpha=-\sum_{i=0}^{n} \sqrt{-1} z_{i} dz_{i}$, 
then the induced affine coordinate system coincides with the real coordinate system underlying the 
holomorphic coordinate system $(z_{0},\dots,z_{n})$, 
hence $(U,J,\nabla)$ is trivial ($\nabla J=0$) in that special case.  
In addition to being holomorphic and totally complex, we assume that 
$\phi$ is conical, which is equivalent to the condition that 
functions $F_{0}, \dots,F_{n}$ are homogeneous of degree one, i.e.\ $F_{i}(\lambda z)=\lambda F_{i}(z)$  
for all $\lambda$ near $1 \in \mathbb{C}^{\ast}$ and $z \in U$. 
Then $U$ is conical, that is, 
any conical holomorphic one-form 
$\phi$ such that $\mathrm{Re} \, \phi$ is an immersion 
on $U$ defines a conical special complex (and symplectic) 
manifold structure of complex dimension $n$. 
Conversely, any such manifold can be locally obtained in this way 
(see \cite[Corollary 5]{ACD}).

If we choose $\alpha=-\sum_{i=0}^{n} \sqrt{-1} z^{i} dz^{i}$ on $\mathbb{C}^{n+1} \backslash \{ 0 \}$, 
then the generalized c-map associates an open submanifold of  
$(\mathbb{H}^{n+1},Q)$ with the standard quaternionic structure $Q$ 
to the complex projective space 
$(\mathbb{C}P^{n},J^{st},[\nabla^{FS}])$, 
where $J^{st}$ is the standard complex structure and $\nabla^{FS}$ is the
Levi-Civita connection of the Fubini-Study metric.
Here we have chosen $\Theta=0$. 
We can also apply 
Corollary \ref{with_two_form} by 
choosing the standard symplectic form as $\psi$. 
More generally, we have the following example.

\begin{example}\label{ex_conic}
{\rm
For a holomorphic function $g$ of homogeneous degree one,  
we consider the holomorphic $1$-form
\[ \alpha =g dz_{0} - \sqrt{-1} \sum_{i=1}^{n} z_{i} dz_{i}  \]
on $U:=\{ (z_{0},z_{1},\dots,z_{n}) \in \mathbb{C}^{n+1} \mid \mathrm{Im}\, g_{0} \neq 0\}$, 
where $g_{i}=\frac{\partial g}{\partial z_{i}}$ $(i=0,1,\dots,n)$. 
{[}Comment Vicente: we should perhaps use a different symbol for $F$ to avoid 
Note that $d \alpha \neq 0$ if there exists $i$ such that $g_{i} \neq 0$ $(i \geq 1)$. 
Setting $z_{i}=u_{i}+\sqrt{-1} v_{i}$ $(i=0,1,\dots,n)$, we have 
\begin{align*}
(x_{0},\dots, x_{n},y_{0},y_{1},\dots,y_{n})
&=\mathrm{Re}\,  \phi (u_{0},\dots, u_{n},v_{0},\dots, v_{n})\\
&=(\mathrm{Re}\,z_{0}, \dots, \mathrm{Re}\,z_{1}, \mathrm{Re}\,g, 
\mathrm{Re}\, (-\sqrt{-1}z_{1}),\dots, \mathrm{Re}\, (-\sqrt{-1}z_{n})) \\
&=(u_{0},\dots, u_{n},\mathrm{Re}\,g,v_{1}, \dots, v_{n} ).
\end{align*}
Since its Jacobian matrix  is given by 
\[   \frac{\partial(x_0, \dots, y_n)}{\partial (u_0,\dots, v_n)} = \left( \begin{array}{cccccccc}
                           1 &            &  &0  & 0 & \dots&\dots & 0\\
                             &  \ddots &  & \vdots &\vdots & & &\vdots \\
                             &            & 1 &0  & 0 & \dots&\dots & 0\\   
    \mathrm{Re}\,g_{0} &  \dots  & \mathrm{Re}\,g_{n} &  -\mathrm{Im}\,g_{0} &  -\mathrm{Im}\,g_{1} & \dots & \dots & -\mathrm{Im}\,g_{n} \\
             0               &  \dots  & 0 & 0 & 1& 0 & \dots & 0 \\ 
             \vdots                 &            &  \vdots & \vdots & 0& 1& & 0 \\ 
             \vdots                 &           &  \vdots &  \vdots  & \vdots &  & \ddots &  \\ 
             0               &  \dots  & 0 & 0 & 0&  & & 1 \\ 
                    \end{array} 
             \right), 
\]
we see that $\mathrm{Re}\, \, \phi$ is an immersion and we obtain a conical special complex structure on $U$. 
The coordinate vector fields of $(x_{0},\dots, y_{n})$ are given by 
\begin{align*}
\frac{\partial}{\partial x_{i}} &=
\frac{\partial}{\partial u_{i}} +  \frac{\mathrm{Re}\,  g_{i}}{\mathrm{Im}\,  g_{0}} 
\frac{\partial}{\partial v_{0}} \,\, (i \geq  0), \\ 
\frac{\partial}{\partial y_{0}} &=
- \frac{1}{\mathrm{Im}\,  g_{0}} \frac{\partial}{\partial v_{0}}, \,\,\, 
\frac{\partial}{\partial y_{j}} 
=
- \frac{\mathrm{Im}\,  g_{j}}{\mathrm{Im}\,  g_{0}} \frac{\partial}{\partial v_{0}}
+\frac{\partial}{\partial v_{j}} \,\, (j  \geq 1)
\end{align*}
on $U$. 
Let $\nabla$ (resp.\ $\nabla^{\text{st}})$ be the flat affine connection on $U$ such that 
$(x_{0},\dots,y_{n})$ (resp. $(u_{0},\dots,v_{n})$)
is a $\nabla$ (resp.\ $\nabla^{\text{st}}$)-affine coordinate system. 
We define $S$ by $\nabla=\nabla^{\text{st}} +S$. Then we calculate
\begin{align*}
0 &= \nabla_{X} \frac{\partial}{\partial x_{i}} = (\nabla^{\text{st}}_{X} +S_{X}) 
\left( \frac{\partial}{\partial u_{i}} + \frac{\mathrm{Re}\,  g_{i}}{\mathrm{Im}\,  g_{0}} 
\frac{\partial}{\partial v_{0}} \right) \\
   &= X \left(\frac{\mathrm{Re}\,  g_{i}}{\mathrm{Im}\,  g_{0}} \right) \frac{\partial}{\partial v_{0}}
      +S_{X} \frac{\partial}{\partial u_{i}} 
      + \frac{\mathrm{Re}\,  g_{i}}{\mathrm{Im}\,  g_{0}} S_{X} \frac{\partial}{\partial v_{0}} \,\, (i \geq 0)
\end{align*}
and similarly we have
\begin{align*}
& -X \left(\frac{1}{\mathrm{Im}\,  g_{0}} \right) \frac{\partial}{\partial v_{0}}
    -\frac{1}{\mathrm{Im}\,  g_{0}} S_{X} \frac{\partial}{\partial v_{0}} = 0, \\
& -X \left(\frac{\mathrm{Im}\,  g_{j}}{\mathrm{Im}\,  g_{0}} \right) \frac{\partial}{\partial v_{0}}
      - \frac{\mathrm{Im}\,  g_{j}}{\mathrm{Im}\,  g_{0}} S_{X} \frac{\partial}{\partial v_{0}} 
      +S_{X} \frac{\partial}{\partial v_{j}} 
= 0 \,\,\, (j>0). 
\end{align*}
From these equations, it holds that
\begin{align}
S_{X} \frac{\partial}{\partial u_{i}} 
&= - \frac{X\mathrm{Re}\, g_{i}}{\mathrm{Im}\,  g_{0}} \frac{\partial}{\partial v_{0}}, \,\, 
S_{X} \frac{\partial}{\partial v_{i}} 
= \frac{X\mathrm{Im}\, g_{i}}{\mathrm{Im}\,  g_{0}} \frac{\partial}{\partial v_{0}} \label{SX_1}
\,\, (i \geq 0).
\end{align}
Using $A_{X}Y=(\nabla_{X} J)(Y)=S_{X}JY-JS_{X}Y$ and (\ref{SX_1}), we have the matrix representation 
\begin{align}\label{nab_J_mat}
A=\nabla J
=\frac{1}{\mathrm{Im} \, g_{0}}
\left( \begin{array}{ccc}
                    A_{0 } & \dots & A_{n} \\
                    0_{2}  &  \cdots & 0_{2}  \\
                    \vdots &  \ddots &  \vdots\\
                    0_{2}  &  \cdots & 0_{2} 
                    \end{array} 
             \right)
\end{align}
of 
$A$ with respect to the frame 
\[ \left( \frac{\partial}{\partial u_{0}},  
\frac{\partial}{\partial v_{0}}, \dots ,
\frac{\partial}{\partial u_{n}},   
\frac{\partial}{\partial v_{n}} \right), \]
where
\[
A_{i}=
\left( \begin{array}{cc}
                    -d \mathrm{Re}\, g_{i} & d \mathrm{Im}\, g_{i} \\
                    d \mathrm{Im}\, g_{i} & d \mathrm{Re}\, g_{i}  
                    \end{array} 
             \right)\,\,\, \mbox{and} \,\,\,\, 
      0_{2}=
\left( \begin{array}{cc}
                    0 & 0 \\
                    0 & 0   
                    \end{array} \right).                      
\]
Note that we change the order of the frame for simplicity. 
This means that $A \neq 0$ if there exists $i$ such that $g_{i} \neq \mbox{constant}$. 
By Lemma \ref{square_B} and (\ref{nab_J_mat}), $A^{2}=(\nabla J)^{2}$ induces a globally defined tensor on $\bar{U}$, 
in particular 
\[ \mathrm{Tr}A^{2} = \mathrm{Tr}A_{0}^{2}
=\frac{2}{(\mathrm{Im}\, g_{0})^{2}}  
(d \mathrm{Re}\, g_{0} \otimes d \mathrm{Re}\, g_{0} 
+d \mathrm{Im}\, g_{0} \otimes d \mathrm{Im}\, g_{0}) 
\] 
also induces the 
the symmetric tensor $\bar{\cal B}$ on $\bar{U}$. 
By Lemma \ref{curvature_p} and (\ref{nab_J_mat}), we see that 
\begin{align*}
R^{\nabla^{\prime}}
&=- \frac{1}{4} A \wedge A 
=
- \frac{1}{4 (\mathrm{Im}\, g_{0})^{2} }
\left( \begin{array}{cccc}
                    A_{0} \wedge A_{0} &  A_{0} \wedge A_{1}&  \dots & A_{0} \wedge A_{n}  \\
                     0_{2}  &  0_{2}& \cdots & 0_{2}  \\
                     \vdots & \vdots &  \ddots &  \vdots\\
                    0_{2}  &  0_{2}& \cdots & 0_{2} 
                    \end{array} 
             \right)
\end{align*}
as the matrix representation.  

Since 
\begin{align*}
dx_{i} &=du_{i}\,\,\,  (i \geq 0), \,\,\,  dy_{0}=\sum_{i=0}^{n}\mathrm{Re}\, g_{i} \, du_{i}
-{\mathrm{Im} \, g_{i}\,} dv_{i}, \\
dy_{j} &=dv_{j} \,\,\ (j >0), 
\end{align*}
a $2$-form $\psi= \sum_{i=1}^{n} dx_{i} \wedge dy_{i}(=\sum_{i=1}^{n} du_{i} \wedge dv_{i})$ is 
$J$-hermitian and  $\nabla$-parallel. Note that 
$\sum_{i=0}^{n} dx_{i} \wedge dy_{i}=dx_{0} \wedge dy_{0} + \psi$ is not $J$-hermitian, 
that is, $(U,J,\nabla, dx_{0} \wedge dy_{0} + \psi)$ is not special K{\"a}hlerian 
if there exists $i>0$ such that $\mathrm{Re}\, g_{i} \neq 0$. 
However it is a special symplectic manifold. 
In fact, it holds 
\[  (dx_{0} \wedge dy_{0}) (\frac{\partial}{\partial u_{0}}, \frac{\partial}{\partial u_{i}})
   =\mathrm{Re}\, g_{i} \,\,\, \mbox{and}\,\,\,  
   (dx_{0} \wedge dy_{0}) (J \frac{\partial}{\partial u_{0}}, J \frac{\partial}{\partial u_{i}})=0. \]
Moreover since 
\begin{align*}
\xi &=\sum_{i=0}^{n} x_{i} \frac{\partial}{\partial x_{i}}
        +y_{i} \frac{\partial}{\partial y_{i}} 
        = \cdots + \sum_{i=1} ^{n} u_{i} \frac{\partial}{\partial u_{i}} 
           + v_{i} \frac{\partial}{\partial v_{i}}, \\
J \xi& =  \cdots + \sum_{i=1}^{n} u_{i} \frac{\partial}{\partial v_{i}} 
           - v_{i} \frac{\partial}{\partial u_{i}},             
\end{align*}
we have $\mu=\psi(\xi,J \xi)=(1/2) \sum_{i=1}^{n} (u_{i}^{2}+v_{i}^{2})$.  
Take a $\mathrm{U}(1)$-bundle $\pi:TU \times \mathrm{U}(1) \to TU$ with a connection form 
\[ \eta=(\pi_{TU} \circ \pi)^{\ast}(\sum_{i=1}^{n}u_{i} dv_{i})+d \theta, \] 
where $\theta$ is the 
angular coordinate of 
$\mathrm{U}(1)$. 
The special case Corollary \ref{with_two_form} of Theorem \ref{gene_sup_cmap}  
can be applied and then we obtain a quternionic manifold. 

We consider the horizontal subbundle of $p_{U}:U \to \bar{U}$ given by the kernel of 
$\kappa=-(1/2s) d \mu \circ J$ on each level set $\mu^{-1}(s) \subset U$ ($s \neq 0 $). 
We retake $U$ as an open set in $\cup_{s>0} \mu^{-1}(s)$. 
For horizontal vector fields $X$ and $Y$ tangent to each level set $\mu^{-1}(s)$, 
$XY \mu =0$ means that 
\[ (p_{U}^{\ast} \bar{a})(X,Y)= \frac{1}{2s} \psi(JX,Y), \]
where $\bar{a}$ is the $\xi$-component of the fundamental tensor of 
$\mathcal{A}^{\nabla^{\prime}}$ as in Section \ref{c-proj:sec} . 
Here we used $d \kappa=\psi/s$. This means that $\bar{a}$ is symmetric and 
$\bar{J}$-hermitian, and hence the Ricci tensor  
of the connection $\bar{\nabla}^{\prime \kappa}$ on $\bar{U}$ induced from $\kappa$ is symmetric and $\bar{J}$-hermitian. 
Therefore it holds 
\begin{align*}
p_{U}^{\ast} \bar{a}
=- \frac{1}{\sum_{i=1}^{n} (u_{i}^{2}+v_{i}^{2})} \sum_{i=1}^{n} d u_{i} \otimes d u_{i}
+  d v_{i} \otimes d v_{i}. 
\end{align*}
Hence the Ricci tensor $Ric^{\bar{\nabla}^{\prime \kappa}}$ of $\bar{\nabla}^{\prime \kappa}$ satisfies
\begin{align*}
&- \frac{1}{\sum_{i=1}^{n} (u_{i}^{2}+v_{i}^{2})} \sum_{i=1}^{n} d u_{i} \otimes d u_{i}
+  d v_{i} \otimes d v_{i} \\
&=\frac{1}{4(n+1) (\mathrm{Im}\, g_{0})^{2}} (d \mathrm{Re}\, g_{0} \otimes d \mathrm{Re}\, g_{0} 
+d \mathrm{Im}\, g_{0} \otimes d \mathrm{Im}\, g_{0}) 
-\frac{1}{2(n+1)} p_{\bar{U}}^{\ast} (Ric^{\bar{\nabla}^{\prime \kappa}} )              
\end{align*}
by (\ref{bar_a}).
In particular, we see that $Ric^{\bar{\nabla}^{\prime \kappa}} \geq 0$. 
For example, when we choose $g=-\sqrt{-1}z_{1}^{l}/z_{0}^{l-1}$ for $l(\neq 1) \in \mathbb{Z}$, 
we obtain 
\begin{align*}
d \mathrm{Re}\, g_{0} 
&= \frac{\sqrt{-1}}{2}(-l+1)l(-w^{l-1}dw+\bar{w}^{l-1}d \bar{w}), \\
d \mathrm{Im}\, g_{0} 
&= -\frac{1}{2}(-l+1)l(w^{l-1}dw+\bar{w}^{l-1}d \bar{w}), \\
d \mathrm{Re}\, g_{1} 
&= \frac{\sqrt{-1}}{2}(-l+1)l(w^{l-2}dw-\bar{w}^{l-1}d \bar{w}), \\
d \mathrm{Im}\, g_{1} 
&= \frac{1}{2}(-l+1)l(w^{l-2}dw+\bar{w}^{l-2}d \bar{w}), \\
d \mathrm{Re}\, g_{j} 
&=d \mathrm{Im}\, g_{j} =0 \,\,\ (j>1),  
\end{align*}
where $w=z_{1}/z_{0}$. 
%
We denote the corresponding objects with subscript $l$ for ones given by 
$g=-\sqrt{-1}z_{1}^{l}/z_{0}^{l-1}$. It holds that 
\begin{align}
R^{\nabla^{l \, \prime}}
&=-\frac{\sqrt{-1} \, l^{2} \, | w |^{2(l-2)}}{(w^{l}+\bar{w}^{l})^{2}}
\left( \begin{array}{ccccccc}
                    0 & -|w|^{2} & -\mathrm{Im}\, w & \mathrm{Re} w & 0 & \dots & 0\\
                    |w|^{2} & 0   & -\mathrm{Re}\, w & -\mathrm{Im} w & 0 & \dots & 0 \\
                    0 &  0 &  0 & 0  & 0 & \dots & 0\\
                    \vdots &  \vdots &  \vdots  & \vdots  & \vdots & \ddots & \vdots\\
                    0 &  0 &  0 & 0  & 0 & \dots & 0\\
                    \end{array} 
             \right) dw \wedge d \bar{w} \label{c_l}
\end{align}
and 
\begin{align}
\mathrm{Tr}(A^{l})^{2}=
\mathrm{Tr}(\nabla^{l} J)^{2}    
=\frac{4 \, l^{2} \, | w |^{2(l-1)}}{ (w^{l}+\bar{w}^{l})^{2}}
(dw \otimes d \bar{w}+d \bar{w} \otimes dw). \label{b_l}
\end{align}
%
%
%
%
%
%

Finally we consider the quaternionic Weyl curvature 
of $TU$.  
Let $W^{q}$ be the quaternionic Weyl curvature of the quaternionic structure 
$Q=\langle I_{1}, I_{2}, I_{3} \rangle$. 
In \cite{AM}, the explicit expression of $W^{q}$ is given
and it is shown that $W^{q}$ is independent of the choice of 
the quaternionic connection. 
Since the Obata connection of the c-map is Ricci flat by Theorem~\ref{ricci_flat},  
we have $W^{q, l}=R^{\tilde{\nabla}^{0,l}}$ for 
$g=-\sqrt{-1}z_{1}^{l}/z_{0}^{l-1}$. 
If $l \neq 1$, then we see that 
\[ W^{q, l}_{X^{v},Y^{v}} Z^{v} 
=R^{{\tilde{\nabla}^{0,l}}}_{X^{v},Y^{v}} Z^{v} 
=\left (
R^{\nabla^{l \, \prime}}
_{X,Y} Z \right)^{v}.  
\]
Because the vertical lift is determined by a differential manifold structure (not by a connection), 
we see that $W^{q, l} \neq W^{q, k}$ 
on $T(U_{k} \cap U_{l})$ if $l \neq k$, where 
$U_{j}=\{ (z_{0},z_{1},\dots,z_{n}) \in \mathbb{C}^{n+1} \mid \mathrm{Im}\, g_{0} \neq 0\}
=\{ (z_{0},z_{1},\dots,z_{n}) \in \mathbb{C}^{n+1} \mid \mathrm{Re}\, (z_{1}/z_{0})^{j} \neq 0\}
$ for 
$g=-\sqrt{-1}z_{1}^{j}/z_{0}^{j-1}$.  
Here we used (\ref{c_l}). 
So we can find different quaternionic structures 
$Q^{\alpha_{1}}, \dots, Q^{\alpha_{t}}$ on 
$T(\bigcap_{i=1}^{t} U_{\alpha_{i}})$, where $1\neq \alpha_{i} \in \mathbb{Z}$. 
Note that $Q^{0}$ is the flat quaternionic structure. 
}
\end{example}

\begin{remark}
{\rm
Since $d \alpha \neq 0$ except the trivial case $g=-\sqrt{-1}z_{0}$, 
Example \ref{ex_conic} with $g=-\sqrt{-1}z_{1}^{l}/z_{0}^{l-1}$ ($l \neq 0$), 
which is local one,  is not given by a local 
special K{\"a}hlerian one. 
}
\end{remark}

\begin{remark}
{\rm 
For a conical special K{\"a}hler manifold $N$, the particular twist data which yields the quaternionic K{\"a}hler structure of the supergravity c-map on $T^{\ast}N \cong TN$ is 
given in \cite[Lemma 5.1]{MS} in consistency with \cite{ACDM}.  
As we noted in the introduction, we also have a freedom in the choice of the data $\Theta$ etc.\  
for our generalized supergravity c-map.  
For instance, the two form $\Theta$ can be chosen as trivial ($\Theta=0$) or as in equation (\ref{a1}). 
For illustration, we can give yet another possible choice of $\Theta$. 
Assume that $\dim N \geq 6$.  
\label{appendix:sec}
Let $\{ \bar{U}_{\alpha} \}_{\alpha \in \Lambda}$ be an open covering of
$\bar{N}$ with local trivializations 
$U_{\alpha}:=p_{N}^{-1}(\bar{U}_{\alpha}) \cong \bar{U}_{\alpha} \times \mathbb{C}^{\ast}$ 
and $g_{\alpha \beta} : \bar{U}_{\alpha} \cap \bar{U}_{\beta} \to \mathbb{C}^{\ast}$ 
be the corresponding transition functions. 
Let $(r_{\alpha},\theta_{\alpha})$ be the polar coordinates with respect to a 
(smooth) local trivialization 
$p_{N}^{-1}(\bar{U}_{\alpha}) \cong \bar{U}_{\alpha} \times \mathbb{C}^*$ for each 
$\alpha \in \Lambda$. 
A principal connection $\eta$ is locally given by 
\[ \eta = p_{N}^{\ast}(\gamma^{\alpha}_{1} \otimes 1+ \gamma^{\alpha}_{2} \otimes \sqrt{-1}) 
+ \left(\frac{dr_{\alpha}}{r_{\alpha}} \otimes 1 +d \theta_{\alpha} \otimes \sqrt{-1} \right) \]
for a $\mathbb{C}$-valued one-form 
$\gamma^{\alpha}_{1} \otimes 1+ \gamma^{\alpha}_{2} \otimes \sqrt{-1}$
on $\bar{U}_{\alpha} \subset \bar{N}$ for each $\alpha \in \Lambda$. 
If we write 
$g_{\alpha \beta}=e^{f^{1}_{\alpha \beta} + f^{2}_{\alpha \beta} \sqrt{-1}}$, then 
\begin{align*}
&f^{1}_{\alpha \beta} + f^{1}_{\beta \gamma} - f^{1}_{\alpha \gamma} =0, \\ 
&f^{2}_{\alpha \beta} + f^{2}_{\beta \gamma} - f^{2}_{\alpha \gamma} \in 2 \pi \mathbb{Z}, \\
&\gamma^{1}_{\beta}-\gamma^{1}_{\alpha} =d f^{1}_{\alpha \beta}, \\
&\gamma^{2}_{\beta}-\gamma^{2}_{\alpha} =d f^{2}_{\alpha \beta}. 
\end{align*}
Therefore we obtain a principal $\mathrm{U}(1)$-bundle $p_{S}:S \to \bar{N}$ 
with transition functions $e^{f^{2}_{\alpha \beta} \sqrt{-1}}:\bar{U}_{\alpha} \cap \bar{U}_{\beta} \to \mathrm{U}(1)$ and 
connection $\eta_{S}$ locally given by 
\[ p_{S}^{\ast}(\gamma^{\alpha}_{2} \otimes \sqrt{-1})
+ d \theta_{\alpha} \otimes \sqrt{-1}.  \]
In fact, the collection $\{ e^{f^{2}_{\alpha \beta} \sqrt{-1}} \}$ of local 
$\mathrm{U}(1)$-valued functions
satisfies the cocycle condtion 
and the collection $\{ \gamma_{\alpha} \}$ of local 
$\sqrt{-1}\mathbb{R}$-valued 
one-forms  
satisfying 
$\gamma^{2}_{\beta}-\gamma^{2}_{\alpha} =d f^{2}_{\alpha \beta}$ defines a connection 
form $\eta_{S}$. 
By Proposition \ref{curv} and (\ref{a_2}),  
its curvature $d \eta_{S} (=p_{S}^{\ast}(d \gamma_{2}^{\alpha}))$ is $2(\bar{a}_{\bar{J}})^{a}$, where 
$(\bar{a}_{\bar{J}})^{a}$ is given by 
\[ 
(\bar{a}_{\bar{J}})^{a} = \frac{1}{8(n+1)} \bar{\cal B}_{\bar{J}}
-\frac{1}{2} (P^{\bar{\nabla}^{\prime}}_{\bar{J}})^{a}. 
\]
On $TN$, we choose the two-form 
$\Theta=2 (p_{N} \circ \pi_{TN})^{\ast}((\bar{a}_{\bar{J}})^{a} )$ and 
consider the pull-back connection $(p_{N}{}_{\#} \circ \pi_{TN}{}_{\#})^{\ast} \eta_{S}$
on the pull-back bundle $P={\pi_{TN}}^{\#} p_{N}^{\#}S$. 
Since $\iota_{Z^{M}}\Theta=0$, we can see that the assumptions in 
Theorem \ref{gene_sup_cmap} hold. It is left for future studies  
to find a canonical choice of $\Theta$ for the generalized supergravity c-map, which 
allows to invert the H/Q-correspondence of \cite{CH}. 
}
\end{remark}

As an application of Theorem \ref{gene_sup_cmap}, we have the following corollary
by patching 
quaternionic manifolds locally constructed by the generalized supergravity c-maps.

\begin{corollary}
Let $(M,J,[\nabla])$ be a complex manifold with a c-projective structure 
$[\nabla]$ and $\dim M=2n$. If $2n=\dim M \geq 4$ 
and the harmonic curvature of its normal Cartan connection 
vanishes, then there exists a $4(n+1)$-dimensional   
quaternionic manifold $(\check{M},Q)$ 
with the vanishing quaternionic Weyl curvature  
foliated by $(n+2)$-dimensional complex manifolds whose leaves space is $M$.
\end{corollary}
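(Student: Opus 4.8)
The plan is to reduce the statement to the flat model via the standard rigidity of parabolic geometries and then transport, by patching, the quaternionic manifold that the generalized supergravity c-map of Theorem~\ref{gene_sup_cmap} produces on that model. A c-projective structure on $(M,J)$ with $\dim M=2n\geq 4$ is a regular normal parabolic geometry modelled on $\mathbb{C}P^{n}=\mathrm{PSL}(n+1,\mathbb{C})/P$, and for such geometries the full Cartan curvature is recovered from its harmonic part, so vanishing of the harmonic curvature forces the Cartan curvature to vanish identically (see \cite{CEMN}). Hence $M$ is c-projectively flat: each point has a neighbourhood $\bar U_{\alpha}$ that is c-projectively diffeomorphic to an open set $W_{\alpha}\subset \mathbb{C}P^{n}$ with its standard c-projective structure, the transition maps $\phi_{\alpha\beta}$ lying in $\mathrm{PGL}(n+1,\mathbb{C})$ since local automorphisms of a flat parabolic geometry are restrictions of global model automorphisms.

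Next I would realise the flat model as a projective special complex manifold and apply the c-map to it. As recalled just before Example~\ref{ex_conic} (cf.\ \cite{ACD}), the conical holomorphic one-form $\alpha=-\sum_{i=0}^{n}\sqrt{-1}\,z_{i}\,dz_{i}$ endows $N_{0}:=\mathbb{C}^{n+1}\setminus\{0\}$ with the trivial conical special complex structure $(J,\nabla,\xi)$, for which $A=\nabla J=0$; its quotient is $(\mathbb{C}P^{n},J^{st},[\nabla^{FS}])$, and since $A=0$ equation (\ref{bar_a}) together with Proposition~\ref{curv} and Theorem~\ref{Weyl_bar} shows that the induced c-projective structure $\mathcal{P}_{\bar{\nabla}^{\prime}}$ is the standard flat one. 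Applying Theorem~\ref{gene_sup_cmap} to $N_{0}$ with $\Theta=0$ yields a model quaternionic manifold $\check X$, namely an open submanifold of $(\mathbb{H}^{n+1},Q)$; by Theorem~\ref{ricci_flat} its Obata connection is flat, hence $W^{q}=R^{\tilde{\nabla}^{0}}=0$ by the connection-independent expression for the quaternionic Weyl curvature used in Example~\ref{ex_conic} (following \cite{AM}). Moreover $\check X$ is foliated by the leaves of the distribution $\mathcal{L}=\mathcal{V}\oplus\langle\xi^{h_{\nabla}},Z^{h_{\nabla}}\rangle$ of Theorem~\ref{gene_sup_cmap}; since $I_{1}(\xi^{h_{\nabla}})=Z^{h_{\nabla}}$, $I_{1}(Z^{h_{\nabla}})=-\xi^{h_{\nabla}}$ and $I_{1}(\mathcal{V})=\mathcal{V}$ by (\ref{I_1}), $\mathcal{L}$ is $I_{1}$-invariant, so the leaves are complex manifolds of complex dimension $n+2$, with leaf space $\mathbb{C}P^{n}$.

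The heart of the proof, and the point I expect to be the main obstacle, is the patching. Each $g\in\mathrm{GL}(n+1,\mathbb{C})$ preserves $J$, the flat $\nabla$ and the Euler field $\xi$ of $N_{0}$, and therefore, by the naturality of every stage of the construction (the tensors $(I_{1},I_{2},I_{3})$ and $Z^{M}$ on $TN_{0}$, the conification with $\Theta=0$, and the quotient by $\mathcal{D}$), induces a foliation-preserving quaternionic automorphism of $\check X$ covering the induced transformation of the leaf space $\mathbb{C}P^{n}$. The delicate issue is the centre: $\phi_{\alpha\beta}$ lifts to $\mathrm{GL}(n+1,\mathbb{C})$ only up to scalars $c_{\alpha\beta\gamma}\in\mathbb{C}^{\ast}$, so one must verify that these scalars act trivially on $\check X$; this is done by tracking that the scalar flow on $N_{0}$, generated by $\xi$ and $Z=J\xi$, is precisely the flow absorbed into the Euler vector field $V$ and the $\mathbb{H}^{\ast}$-action during the conification and the passage $\hat M\to\bar M=\check X$, so that in fact $\mathrm{PGL}(n+1,\mathbb{C})$ acts on $\check X$. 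Granting this, the flatness of $M$ is a $(\mathrm{PGL}(n+1,\mathbb{C}),\mathbb{C}P^{n})$-structure with developing map $\mathrm{dev}:\widetilde M\to\mathbb{C}P^{n}$ and holonomy $\rho:\pi_{1}(M)\to\mathrm{PGL}(n+1,\mathbb{C})$, and I would set $\check M:=(\mathrm{dev}^{\#}\check X)/\pi_{1}(M)$, where $\pi_{1}(M)$ acts through $\rho$ and the action on $\check X$; equivalently one glues the pieces $\check X|_{W_{\alpha}}$ by the lifted transition automorphisms, the cocycle condition holding exactly because the centre acts trivially. The quaternionic structure, the vanishing of $W^{q}$, and the foliation by complex $(n+2)$-folds are local and $\mathrm{PGL}(n+1,\mathbb{C})$-invariant, hence descend to $\check M$, whose leaf space is $M$.
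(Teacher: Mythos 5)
Your first two paragraphs are sound and essentially reproduce the paper's route: reduce to the flat model via the vanishing of the harmonic curvature (citing \cite{CEMN}), apply Theorem~\ref{gene_sup_cmap} with $\Theta=0$, $f=f_{1}=1$ to the trivial conical special complex structure on $N_{0}=\mathbb{C}^{n+1}\setminus\{0\}$, identify the output with an open subset of flat $\mathbb{H}^{n+1}$ via Example~\ref{ex_conical_hyp}, deduce $W^{q}=0$, and observe the $I_{1}$-invariant foliation $\mathcal{L}$ with complex $(n+2)$-dimensional leaves. (A minor inaccuracy: Theorem~\ref{ricci_flat} asserts only Ricci-flatness of the Obata connection; the full flatness you use does hold here, but it comes from the curvature formulas in the proof of that theorem together with $A=\nabla J=0$ in the model, not from the theorem's statement.)

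The genuine gap is precisely at the step you call the heart of the proof: the claim that the central scalars act trivially on the model $\check X$, so that $\mathrm{PGL}(n+1,\mathbb{C})$ acts on it. This is false. In the model nothing of $TN_{0}$ is quotiented away: the conification adjoins the $\mathbb{H}^{\ast}$-factor and the passage $\hat{M}\to\bar{M}$ removes exactly those four directions again, so $\check X\cong TN_{0}$ (Example~\ref{ex_conical_hyp} exhibits $\bar{M}\cong M$). An element $g\in\mathrm{GL}(n+1,\mathbb{C})$ acts on $\check X\cong TN_{0}=N_{0}\times\mathbb{C}^{n+1}$ by its differential $(z,w)\mapsto(gz,gw)$, so the scalar $c\,\mathrm{id}$ acts by the complete lift $(z,w)\mapsto(cz,cw)$, which is nontrivial because it moves the fibre slot $w$. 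What the construction absorbs are only the \emph{horizontal} lifts $\xi^{h_{\nabla}}$, $Z^{h_{\nabla}}$, whose flows are $(z,w)\mapsto(e^{t}z,w)$ and $(z,w)\mapsto(e^{it}z,w)$; these are tangent to the leaves of $\mathcal{L}$, and even those directions are quotiented only upon passing to the leaf space $\bar{N}$, not in forming $\check{M}$ itself. Consequently $\mathrm{PGL}(n+1,\mathbb{C})$ does not act on $\check X$, your definition $\check{M}:=(\mathrm{dev}^{\#}\check X)/\pi_{1}(M)$ is not defined as stated (the holonomy $\rho$ takes values in a group with no action on $\check X$), and the \v{C}ech cocycle of lifted transition maps closes only up to central scalars, with an obstruction in $H^{2}$ with coefficients in the locally constant sheaf $\mathbb{C}^{\ast}$ that need not vanish. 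Note that the paper's own proof does not resolve this either: it writes $M=\bigcup_{\alpha}U_{\alpha}$ with $U_{\alpha}$ open in $\mathbb{C}P^{n}$, takes $\check{M}:=\bigcup_{\alpha}W_{\alpha}$ with the pieces $W_{\alpha}=\varphi^{\prime}(TV_{\alpha})$ literally overlapping inside $\mathbb{H}^{n+1}$, and builds the quaternionic connection by a partition of unity; so you have correctly located the delicate point that the paper glosses over, but your proposed resolution (triviality of the central action) does not work, and without it the global gluing step remains unproved.
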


\begin{proof}
Since 
$\dim M \geq 4$ and the harmonic curvature of its normal Cartan connection 
vanishes, $(M,J,[\nabla])$ is locally isomorphic to $(\mathbb{C}P^{n},J^{st},[\nabla^{FS}])$
(see \cite{CEMN} for example). 
So we may assume that $M=\bigcup_{\alpha} U_{\alpha}$, where 
$U_{\alpha}$ is an open subset $\mathbb{C}P^{n}$. 
Set $V_{\alpha}:=p^{-1} (U_{\alpha})$, where 
$p:\mathbb{C}^{n+1} \backslash \{ 0 \} \to \mathbb{C}P^{n}$ is the projection.  
We consider the standard complex structure and 
the standard flat connection induced from $\mathbb{C}^{n+1}$
on each $V_{\alpha}$. 
By Theorem \ref{gene_sup_cmap},  
we have a quaternionic manifold $W_{\alpha}:=\varphi^{\prime}(TV_{\alpha}) 
\subset \mathbb{H}^{n+1}$, where $\varphi^{\prime}$
is the diffeomorphism given in  
Example \ref{ex_conical_hyp}.  
Here we have chosen the two-form $\Theta=0$  and $f=f_{1}=1$ on 
$TV_{\alpha}$ for each $\alpha$. 
We set $\check{M}:=\bigcup_{\alpha} W_{\alpha}$.
The induced quaternionic structure on each $W_{\alpha}$ coincides with 
the standard one from $\mathbb{H}^{n+1}$. 
Hence an almost quaternion structure $Q$ on $\check{M}$ 
can be obtained. 
Since there exists a quaternionic connection on each $W_{\alpha}$, one can obtain 
a quaternionic connection on $\check{M}$ by the partition of unity, that is, $Q$ is a
quaternionic structure with vanishing quaternionic Weyl curvature. 
For each $p \in TV_{\alpha} \cap TV_{\beta}$, the leaf of $\cal{L}$ through $p$ in $TV_{\alpha}$ is 
denoted by $L^{\alpha}$ and corresponding leaf in $W_{\alpha}$ is denoted by $\hat{L}^{\alpha}$, 
that is $\hat{L}^{\alpha}=\varphi^{\prime}(L^{\alpha})$. 
Since $\hat{L}^{\alpha}=\hat{L}^{\beta}$ in $\check{M}$, 
we obtain leaves in $\check{M}$ and see that its leaves space is $M$. 
Since the subbundle 
$\mathcal{L}$ is an $I_{1}$-invariant in $T(TV_{\alpha})$, 
each leaf $L$ is a complex manifold with $I:=I_{1}|_{L}$. 
Each leaf $\hat{L}$ on $\check{M}$ is obtained by the Swann's twist with 
an almost complex structure $\hat{I}$. 
By \cite[Proposition 3.8]{Sw} and $\Theta=0$, $\hat{I}$ is integrable.  
\end{proof}
%
%

\noindent
{\bf Acknowledgments.} 
We thank Aleksandra Bor\'owka for comments. 
Research by the first author is partially funded by the Deutsche Forschungsgemeinschaft (DFG, German Research 
Foundation) under Germany's
Excellence Strategy -- EXC 2121 Quantum Universe -- 390833306. 
The second author's research is partially supported by 
JSPS KAKENHI Grant Number 18K03272.

\noindent
Vicente Cort{\' e}s \\
Department of Mathematics \\
and Center for Mathematical Physics \\
University of Hamburg \\
Bundesstra\ss e 55, \\
D-20146 Hamburg, Germany. \\ 
email:vicente.cortes@uni-hamburg.de \\

\noindent
Kazuyuki Hasegawa \\
Faculty of teacher education \\
Institute of human and social sciences \\
Kanazawa university \\
Kakuma-machi, Kanazawa, \\
Ishikawa, 920-1192, Japan. \\
e-mail:kazuhase@staff.kanazawa-u.ac.jp

\end{document}